\documentclass[a4paper,reqno,12pt]{amsart}

\usepackage{tikz,pgf,amscd,enumerate,multicol}
\usepackage[pagebackref,colorlinks,linkcolor=magenta,citecolor=teal,urlcolor=blue,hypertexnames=false]{hyperref}
\usepackage{amsmath,amssymb,tikz-cd,tkz-graph}

\usepackage[margin=1.2cm]{geometry}
\usetikzlibrary{decorations.markings,arrows}

\tikzset{midarrow/.style = {postaction=decorate, decoration={markings,mark=at position .5 with \arrow{stealth}}}}

\pgfdeclarelayer{background}
\pgfsetlayers{background,main}
\usepackage[all,2cell]{xy}

\newcommand{\Zz}{\mathcal{Z}}
\newcommand{\N}{\mathbf{n}}
\newcommand{\M}{\mathbf{m}}
\newcommand{\Xs}{\mathsf{X}}

\newcommand{\Hom}{\operatorname{Hom}}

\newcommand{\col}{\operatorname{col}}

\newcommand{\KP}{\operatorname{KP}}

\newcommand{\ea}{\mathbf{e}_1}
\newcommand{\eb}{\mathbf{e}_2}
\newcommand{\ec}{\mathbf{e}_3}

\newtheorem{thm}{Theorem}[section]
\newtheorem{cor}[thm]{Corollary}
\newtheorem{lem}[thm]{Lemma}
\newtheorem{prop}[thm]{Proposition}

\theoremstyle{definition}
\newtheorem{dfn}[thm]{Definition}

\theoremstyle{remark}
\newtheorem{rmk}[thm]{Remark}
\newtheorem{rmks}[thm]{Remarks}
\newtheorem{example}[thm]{Example}

\newcommand{\image}{\operatorname{Im}}
\newcommand{\Ker}{\operatorname{Ker}}

\newcommand{\tr}{\operatorname{tr}}

\newcommand{\End}{\operatorname{End}}

\def\mT{\mathcal{T}}

\newcommand{\gr}{\operatorname{gr}}
\newcommand{\ev}{\operatorname{ev}}

\allowdisplaybreaks

\title[A textile framework for $3$-graphs]{Higher-dimensional symbolic dynamics: A textile framework for $3$-graphs}

\author{Roozbeh Hazrat}
\address{Roozbeh Hazrat: Centre for Research in Mathematics and Data Science, Western Sydney University, Australia} \email{r.hazrat@westernsydney.edu.au}

\author{Promit Mukherjee}
\address{Promit Mukherjee: Department of Mathematics, Jadavpur University, Kolkata-700032, India} \email{promitmukherjeejumath@gmail.com}

\author{Sujit Kumar Sardar}
\address{Sujit Kumar Sardar: Department of Mathematics, Jadavpur University, Kolkata-700032, India} \email{sujitk.sardar@jadavpuruniversity.in, sksardarjumath@gmail.com}

\subjclass[2020]{Primary: 37B10, 05C20; Secondary: 19D55, 22A22}

\keywords{Textile systems, Textile algebra, Diagonal graph, Shift of finite type, Three-dimensional textile, Higher-rank graphs, Pullback squares, Homology of a textile system}

\begin{document}

\begin{abstract}
Textile systems are best known to model two-dimensional shifts of finite type. In this article, we associate a discrete algebra with a textile system and provide a groupoid model for it. When the textile system is left-resolving, this algebra coincides with the Kumjian--Pask algebra of the associated $2$-graph. The main objective of this paper is to extend textile systems to dimension $3$ so that the resulting structures can, on the one hand, capture all three-dimensional shifts of finite type and, on the other hand, provide a textile-like framework for $3$-graphs extending the well-known connection between $2$-graphs and left-resolving textile systems. We introduce a model of a three-dimensional textile system and investigate the interplay between such textile systems and $3$-graphs. In particular, our investigation shows that the conditions required to form a $3$-graph from a $3$-colored graph (including the delicate associativity condition on tricolored paths), can be encoded in terms of simple pullback diagrams arising from the textile data. We also define homology groups for three-dimensional textiles and prove that these groups coincide with the homology groups of the associated $3$-graph, thus establishing that our construction is homologically consistent with $3$-graphs.  
\end{abstract}

\maketitle

\tableofcontents

\section{Introduction}\label{sec introduction}
Throughout the advancement of symbolic dynamics, one of the challenging and yet engaging programs has been to extend various dynamical concepts beyond one-dimension. Pursuing this often requires novel techniques as several difficulties may arise in the transition from dimension $1$ to higher; for example, the undecidability of the \emph{nonemptiness} and the \emph{extension} problems (see \cite{Lind-Marcus,JM}). The urge to overcome these difficulties attracted many researchers to higher-dimensional symbolic dynamics and notable works have been done over the years in this direction, see for example \cite{Aso, JM, JM2, Kitchens, MP, Quas-Trow, Schmidt}.

Textile systems, introduced by Nasu \cite{Nasu}, play important roles in understanding the dynamical properties of endomorphisms and automorphisms of topological Markov shifts. A textile system naturally gives rise to a two-dimensional shift of finite type, often referred to as a \emph{textile shift}, whose elements may be viewed as arrays of Wang tiles paved using certain horizontal and vertical concatenability conditions. Moreover, Johnson and Madden \cite{JM} showed that any two-dimensional matrix shift can be realized essentially as a textile shift. As any one-dimensional shift of finite type is conjugate to the \emph{edge shift} of a directed graph (\cite[Theorem 2.3.2]{Lind-Marcus}), the result of Johnson and Madden, undoubtedly, establishes textile systems as the perfect substitutes for directed graphs in dimension $2$ representing two-dimensional shifts of finite type.  

In 2000, Kumjian and Pask \cite{Kumjian-Pask} introduced the concept of higher-rank graphs, or $k$-graphs. These are certain small categories that generalize directed graphs by encoding paths with multi-dimensional degrees and provide a combinatorial framework for studying higher-dimensional Cuntz--Krieger algebras \cite{Robertson}. Apart from their operator algebraic importance, higher-rank graphs can also be used to produce higher-dimensional dynamical systems. In particular, $2$-graphs are closely related to two-dimensional shift spaces as evident from several interesting works (see \cite{BGGLPP,Deaconu, KPW, PRW,SZ,Tang}). In \cite[Proposition 3.7]{Deaconu}, Deaconu first established that the two coordinate graphs of a $2$-graph can be used to determine a textile system and then Tang \cite{Tang} extensively studied the connection between left-resolving textile systems and $2$-graphs.  

The purpose of this article is to develop a textile framework for $3$-graphs, extending the established relationship between $2$-graphs and left-resolving textile systems. To this end, we propose a model of a three-dimensional textile system and investigate the correspondence between such systems and rank-$3$ graphs. It is known that to obtain a $2$-graph from a given $2$-colored directed graph, it suffices to fix the factorization rules for bicolored paths. However, this drastically fails in dimensions larger than two---mere commutativity of bicolored paths is not enough. Hazlewood, Raeburn, Sims and Webster \cite{HRSW} showed that one needs a complete set of commutative squares that satisfy a specific associativity criterion on tricolored paths. It is quite interesting to translate this associativity condition into other combinatorial languages; for example, Yang \cite{Yang} phrased this using the \emph{Yang-Baxter} equations (YBE). In the case of $2$-graphs, path factorization (i.e., the commutativity of squares) can be captured by the unique path lifting properties of the component homomorphisms of the associated textile system. A natural question arises: can this be done for $3$-graphs? In other words, what will be the textile picture encoding the associativity of tricolored paths? Our approach seeks to identify the right textile structure which can reflect the geometry and path structure of $3$-graphs. More precisely, we show that all the conditions required to form a $3$-graph from a given $3$-colored graph can be phrased in terms of simple pullback diagrams originating from our prescribed three-dimensional textile system. We also define homology groups for three-dimensional textile systems and establish that the resulting textile homology agrees with the homology of the associated $3$-graph \cite{HomKPS}. The results presented here demonstrate that the structural connections observed in dimension $2$ between textile systems and higher-rank graphs persist, in an appropriate form, in dimension $3$, and open the possibility of further investigations of higher-dimensional textile systems and their algebraic and dynamical invariants.

Inspired by the rich connection between combinatorial dynamical systems (edge shifts of directed graphs or, equivalently, matrix shifts) and combinatorial algebras (Cuntz algebras, graph $C^*$-algebras and Leavitt path algebras) established in \cite{ALPS, Bates-Pask,Carlsen,CK,Hazdyn} and numerous other works, in this paper, apart from defining three-dimensional textile system, we also associate a discrete algebra (called \emph{textile algebra}) with a usual two-dimensional textile system. When a directed graph is plugged in as a trivial textile, our algebra becomes the usual Leavitt path algebra. Moreover, the textile algebra of a left-resolving textile system recovers the Kumjian--Pask algebra of the associated $2$-graph. In \cite[Lemma 3.2]{Matsumoto}, Matsumoto established that the two-dimensional subshift arising from a $C^*$-textile dynamical system can be determined completely by a one-dimensional subshift. Algebraising this, we show here that the exact same thing happens for the first-quadrant two-dimensional shift of a left-resolving textile system $T$ by explicitly constructing a directed graph, which we call the \emph{diagonal graph} of $T$. We also investigate the relationship between a textile system and its diagonal graph at the level of algebras by showing that there is a natural $\mathbb{Z}$-graded injective homomorphism from the Leavitt path algebra of the diagonal graph into the textile algebra that preserves the respective diagonal subalgebras. 

We now describe the structure of the paper. The germane background material is included in Section \ref{sec preliminaries}. 
In Section \ref{sec algebra of TS}, we define the textile algebra (Definition \ref{def the textile algebra}) from a two-dimensional textile system, prove some basic structural results, and remark on its connections with other algebras (Remark \ref{rem connecting with other algebras}). Then, in Section \ref{sec the diagonal graph}, we introduce the diagonal graph of a textile system and observe that the two-dimensional first-quadrant shift space of a left-resolving textile system is uniquely determined by the edge shift of the diagonal graph. We also show that the diagonal graph can be used to decide whether the textile shift is nonempty (Proposition \ref{pro characterizing nonemptyness}). The main result of this section (Theorem \ref{th the connection between A(T) and L(Delta_T)}) establishes the Leavitt path algebra of the diagonal graph as a $\mathbb{Z}$-graded diagonal-preserving subalgebra of the textile algebra. We show in Section \ref{sec groupoid of TS} that the first-quadrant shift space of a left-resolving textile system together with the shift maps forms a rank-$2$ Deaconu--Renault system and the Steinberg algebra of the associated Deaconu--Renault groupoid recovers the textile algebra (Corollary \ref{cor textile algebra as Steinberg algebra}). In Section \ref{sec 3D textiles}, we propose our model of a three-dimensional textile system (Definition \ref{def 3D textile}) and establish that any three-dimensional shift of finite type can be represented as a three-dimensional textile shift (Proposition \ref{pro realizing 3D SFT as 3-textile SFT}). We prove that a three-dimensional textile is precisely a functor satisfying specific properties from the parallel arrow category $\downdownarrows$ to the category of usual textile systems (Theorem \ref{th alternate description of 3D textile}). This ensures that our description of three-dimensional textile systems is in line with that of the two-dimensional counterparts. Then we investigate the mutual relationship between three-dimensional textiles and $3$-graphs in Section \ref{sec 3-graphs and 3D textiles}. We show that any $3$-graph naturally gives rise to a three-dimensional textile satisfying nice properties depicted in terms of pullback squares (Propositions \ref{pro 3-graph to 3D textile} and \ref{pro path liftings for 3-graph textile}), and conversely, any textile with such pullback squares can be used to construct a $3$-graph (Theorem \ref{th 3D textile to 3-graph}). We finish this section by providing a constructive way to produce a three-dimensional textile (and associated $3$-graph) from two given two-dimensional textiles with a common base graph. The paper concludes in Section \ref{sec homology of 3D textile}, where we define the notion of a covering textile and, using this, introduce the \emph{graded homology} of a two-dimensional textile system. We show that the graded homology coincides with the usual homology of the skew-product of the associated $2$-graph (Theorem \ref{th connecting graded homology with homology of skew-product}). Finally, we define homology groups for a three-dimensional textile system and verify that these are isomorphic to the homology of the corresponding $3$-graph (Theorem \ref{th the homologies are isomorphic}).  

\section{Background information}\label{sec preliminaries}
This section provides a brief overview of the preliminary concepts that will be used in the coming sections. For a detailed and comprehensive understanding of symbolic dynamics and textile systems, readers may consult \cite{BGGLPP, Lind-Marcus, Quas-Trow, Tang}. 

\textbf{Notations:} Throughout the article, we denote by $\mathbb{N}$ the set of all non-negative integers. For any positive integer $k$, $\mathbb{N}^k$ ($\mathbb{Z}^k$) is considered simultaneously as a partially ordered commutative monoid (group) with respect to the component-wise ordering. For $i=1,2,\ldots,k$, $\mathbf{e}_i$ denotes the canonical basic vector in $\mathbb{Z}^k$ with $1$ at the $i$-th position and $0$ elsewhere. By $\mathbf{1}$, we denote the $k$-tuple with all entries equal to $1$. A directed graph $E$ is always represented as a $4$-tuple $E=(E^0,E^1,r_E,s_E)$, where $E^0,E^1$ are the sets of vertices and edges respectively, and $r_E$, $s_E$ are the range and source maps. For two directed graphs $F,E$, a graph homomorphism $f:F\longrightarrow E$ is written as $f=(f^0,f^1)$, where $f^0:F^0\longrightarrow E^0$ and $f^1:F^1\longrightarrow E^1$ denote the vertex map and the edge map, respectivelu. 

\subsection{Higher-dimensional symbolic dynamics}\label{ssec symbolic dynamics}
Let $\mathcal{A}$ be a finite set, which we call the \emph{alphabet} and the elements of which are called \emph{symbols}. For any positive integer $k$, the \emph{full $k$-dimensional shift} is defined as \[\mathcal{A}^{\mathbb{Z}^k}:=\{x=(x_\N)_{\N\in \mathbb{Z}^k}~|~x_\N\in \mathcal{A}~\text{for all}~\N\in \mathbb{Z}^k\}.\] Equivalently, the full shift is the collection of all maps $\mathbb{Z}^k\longrightarrow \mathcal{A}$. The elements of the full shift are generally called \emph{arrays} in $k$-dimensions. If we consider the alphabet $\mathcal{A}$ to be equipped with the discrete topology, then the full shift under the product topology becomes a compact $0$-dimensional metrizable space. The dynamics inside the full shift is caused by the \emph{shift maps}: for any $\N\in \mathbb{Z}^k$ the \emph{shift map} $\sigma_\N$ is a homeomorphism on $\mathcal{A}^{\mathbb{Z}^k}$ such that $(\sigma_\N(x))_\M:=x_{\M+\N}$ for all $\M\in \mathbb{Z}^k$. For any $\N=(n_1,n_2,\ldots,n_k)\in \mathbb{Z}^k$, note that $\sigma_\N=\sigma_{\mathbf{e}_1}^{n_1}\circ \sigma_{\mathbf{e}_2}^{n_2}\circ \cdots \circ \sigma_{\mathbf{e}_k}^{n_k}$. By a \emph{$k$-dimensional shift space} over the alphabet $\mathcal{A}$, we mean a closed subspace $\mathsf{X}$ of $\mathcal{A}^{\mathbb{Z}^k}$ such that $\sigma_\N(\mathsf{X})=\mathsf{X}$ for all $\N\in \mathbb{Z}^k$ or equivalently, $\sigma_{\mathbf{e}_i}(\Xs)=\Xs$ for all $i=1,2,\ldots,k$. 

We fix some notations. Suppose $\Xs$ is a $k$-dimensional shift space over the alphabet $\mathcal{A}$. 
\begin{enumerate}
\item[$\bullet$] Let $\N=(n_1,n_2,\ldots,n_k)$ be a $k$-tuple of positive integers. By $\mathcal{B}_\N(\Xs)$, we denote the set of all blocks (precisely \emph{hypercuboids}) of dimension $n_1\times n_2\times \cdots \times n_k$ that appear in some array of $\Xs$. For $\omega\in \mathcal{B}_\N(\Xs)$ and $\mathbf{q}=(q_1,q_2,\ldots,q_k)$ with $1\le q_i\le n_i$ for all $i=1,2,\ldots,k$, $\omega_\mathbf{q}$ denotes the symbol which appear in the $(q_1,q_2,\ldots,q_k)$-th position of $\omega$. 

\item[$\bullet$] For $x\in \Xs$ and $\N,\M\in \mathbb{Z}^k$ with $\N\le \M$, we denote $x_{[\N,\M]}$ to be the element of $\mathcal{B}_{\M-\N+\mathbf{1}}(\Xs)$ such that $(x_{[\N,\M]})_\mathbf{q}=x_{\mathbf{q}+\N-\mathbf{1}}$ for all $\mathbf{1}\le \mathbf{q}\le \M-\N+\mathbf{1}$.  

\item[$\bullet$] For any block $\omega$ of dimension $\N\ge \mathbf{1}$, we denote \[\mathcal{Z}(\omega):=\{x\in \Xs~|~x_{[0,\N-\mathbf{1}]}=\omega\}.\] In particular, for any symbol $a\in \mathcal{A}$, $\mathcal{Z}(a)=\{x\in \Xs~|~x_0=a\}$. 
\end{enumerate}
Two $k$-dimensional shift spaces $\Xs$ and $\mathsf{Y}$ (possibly over different alphabets) are said to be \emph{(topologically) conjugate} if there exists a homeomorphism $h:\Xs\longrightarrow \mathsf{Y}$ such that $\sigma_\N \circ h=h\circ \sigma_\N$ for all $\N\in \mathbb{Z}^k$. In this case, we write $\Xs\cong \mathsf{Y}$.

Let $\Xs$ be a $k$-dimensional shift space over some alphabet $\mathcal{A}$ and $\N$ a $k$-tuple of positive integers. Define a new alphabet $\mathcal{U}=\mathcal{B}_\N(\Xs)$ and consider the map $\beta_\N:\Xs\longrightarrow \mathcal{U}^{\mathbb{Z}^k}$ defined by \[(\beta_\N(x))_\M:=x_{[\M,\M+\N-\mathbf{1}]}\] for all $\M\in \mathbb{Z}^k$. The \emph{$\N$-th higher-block presentation} of $\Xs$ is denoted by $\Xs^{[\N]}$ and is defined as $\image(\beta_\N)$. This is a $k$-dimensional shift space over the alphabet $\mathcal{U}$. It can be shown that $\beta_\N:\Xs\longrightarrow \Xs^{[\N]}$ is a shift commuting homeomorphism and so $\Xs\cong \Xs^{[\N]}$. 

Suppose $A_i$, $i=1,2,\ldots,k$ are $0-1$ matrices indexed by the alphabet $\mathcal{A}$. Then the \emph{$k$-dimensional matrix shift} determined by the matrices $A_i$ is denoted by $\Xs(A_1,A_2,\ldots,A_k)$ and is defined as \[\Xs(A_1,A_2,\ldots,A_k):=\{x\in \mathcal{A}^{\mathbb{Z}^k}~|~A_i(x_\N,x_{\N+\mathbf{e}_i})=1~\text{for all}~\N\in \mathbb{Z}^k~\text{and}~i=1,2,\ldots,k\}.\] Clearly, $\Xs(A_1,A_2,\ldots,A_k)$ is a shift space over $\mathcal{A}$. A \emph{$k$-dimensional shift of finite type (SFT)} is a shift space $\Xs$ such that $\Xs\cong \Xs(A_1,A_2,\ldots,A_k)$ for some matrices $A_1,A_2,\ldots,A_k$. We remark that there is an independent definition of a shift of finite type in literature (see \cite{BGGLPP,Lind-Marcus,Quas-Trow}) which is more appropriate to justify its name; however, it is a standard fact that any shift of finite type is conjugate to a matrix shift. That is why we choose the above definition.

\subsection{Higher-rank graphs}\label{ssec k-graphs}
For a positive integer $k$, a \emph{$k$-graph} is defined as a pair $(\Lambda,d)$ where $\Lambda$ is a (countably) small category and $d:\Lambda\longrightarrow \mathbb{N}^k$ is a functor (viewing the monoid $\mathbb{N}^k$ as a small category with one object) satisfying the \emph{unique factorization property}: if $d(\lambda)=\N+\M$ for $\N,\M\in \mathbb{N}^k$, then there exist unique $\alpha,\beta\in \Lambda$ such that $d(\alpha)=\N$, $d(\beta)=\M$ and $\lambda=\alpha\beta$. The functor $d$ is usually called the \emph{degree} functor. Often we refer a $k$-graph only by mentioning the category $\Lambda$ keeping in mind that there is a certain degree functor. The \emph{domain} and \emph{co-domain} maps of $\Lambda$ are denoted by $s_\Lambda$ and $r_\Lambda$ respectively. 

Instead of going into more details of $k$-graphs, which one can avail from \cite{Pino,HMPS2,Kumjian-Pask} etc., we now list the notations regarding $k$-graphs, which are going to be used frequently in the article. 

\begin{enumerate}
\item[$\bullet$] For any $\N\in \mathbb{N}^k$, $\Lambda^\N:=d^{-1}(\N)$. By virtue of the unique factorization property, $\Lambda^0$ coincides with the set of objects of $\Lambda$. 

\item[$\bullet$] For $u,v\in \Lambda^0$, $u\Lambda^\mathbf{n}:=r_\Lambda^{-1}(u)\cap \Lambda^\mathbf{n}$, $\Lambda^\mathbf{n} v:=\Lambda^\mathbf{n}\cap s_\Lambda^{-1}(v)$ and $u\Lambda^\mathbf{n} v:=u\Lambda^\mathbf{n} \cap \Lambda^\mathbf{n} v$. 

\item[$\bullet$] For $\lambda\in \Lambda$ and $0\le \N\le \M\le d(\lambda)$, $\lambda(\N,\M)$ denotes the unique element in $\Lambda$ such that $\lambda=\alpha \lambda(\N,\M)\beta$ for some $\alpha\in \Lambda^\N$ and $\beta\in \Lambda^{d(\lambda)-\M}$. 

\item[$\bullet$] For $\N,\M\in \mathbb{N}^k$ with $0\le\N<\M$, $\ev_{\N,\M}$ denotes the following map:
\begin{align*}
\ev_{\N,\M}:\displaystyle{\bigcup_{\mathbf{q}\ge \M}}\Lambda^\mathbf{q}&\longrightarrow \Lambda^{\M-\N}\\
\lambda&\longmapsto \lambda(\N,\M).
\end{align*} 
\end{enumerate}

By a $k$-\emph{colored graph}, we mean a directed graph $E$ with a map $\col:E^1\longrightarrow \{\mathbf{e}_i~|~i=1,2,\ldots,k\}$. If $E^*$ is the free category (path category) of $E$, then the map $\col$ can be extended to a functor $d:E^*\longrightarrow \mathbb{N}^k$ by defining $d(v)=0$ for all $v\in E^0$ and $d(\lambda):=\displaystyle{\sum_{i=1}^{\ell}} \col(\lambda_i)$ for any path $\lambda=\lambda_1 \lambda_2\cdots \lambda_\ell$ of $E$. 

There is a constructive way to obtain a $k$-graph from a $k$-colored graph $E$. For this, we need an equivalence relation $\mathcal{R}$ on $E^*$ such that whenever $\alpha\mathcal{R}\beta$, then $r(\alpha)=r(\beta)$, $s(\alpha)=s(\beta)$ and $d(\alpha)=d(\beta)$. Moreover, suppose the following hold.

$(KG0)$ $\mathcal{R}$ is a congruence on $E^*$: if $\lambda=\lambda_1\lambda_2\in E^*$ and $\mu_1\mathcal{R}\lambda_1$, $\mu_2\mathcal{R}\lambda_2$, then $\lambda\mathcal{R}\mu_1\mu_2$. 

$(KG1)$ $\mathcal{R}|_{E^1}=\Delta_{E^1}$, the diagonal relation on $E^1$.

$(KG2)$ For any $\lambda=\lambda_1\lambda_2\in E^2$ with $d(\lambda_1)=\mathbf{e}_i$ and $d(\lambda_2)=\mathbf{e}_j$, there is a unique $\mu=\mu_1\mu_2\in E^2$ such that $d(\mu_1)=\mathbf{e}_j$, $d(\mu_2)=\mathbf{e}_i$ and $\lambda\mathcal{R}\mu$. 

$(KG3)$ For any $i\neq j\neq \ell\neq i$ and a tricolored path $\alpha\beta\gamma\in E^3$ with color sequence $\mathbf{e}_i,\mathbf{e}_j,\mathbf{e}_\ell$, the two tricolored paths $\gamma''\beta''\alpha''$ and $\gamma_2\beta_2\alpha_2$ with reversed color sequence obtained by following the two routes: 

\begin{align*}
(R1)~ &\alpha\beta\gamma\longrightarrow \alpha \gamma'\beta'\longrightarrow \gamma''\alpha'\beta'\longrightarrow \gamma''\beta''\alpha'',\\
(R2)~ &\alpha\beta\gamma\longrightarrow \beta_1 \alpha_1 \gamma\longrightarrow \beta_1 \gamma_1 \alpha_2\longrightarrow \gamma_2 \beta_2 \alpha_2;
\end{align*}
are the same. Then it was established in \cite[Theorem 4.5]{HRSW} that the quotient category $\Lambda:=E^*/\mathcal{R}$ is a $k$-graph with $d([\alpha]_\mathcal{R})=d(\alpha)$ for any $\alpha\in E^*$. Note that for $2$-colored graphs, $(KG3)$ is vacuously satisfied; as a consequence, to get a $2$-graph from a $2$-colored graph, we just need to specify the factorization rules (see $(KG2)$ above). 

We recall the homology of $k$-graphs as introduced in \cite{HomKPS}. Let $\Lambda$ be a $k$-graph. For $\lambda\in \Lambda$ with $d(\lambda)=(n_1,n_2,\ldots,n_k)$, the \emph{length} of $\lambda$ is defined as $|\lambda|:=\displaystyle{\sum_{i=1}^{k}}n_i$. Now for each $n\in \mathbb{N}$, define \[Q_n(\Lambda):=\{\lambda\in \Lambda~|~d(\lambda)\le \mathbf{1}~\text{and}~|\lambda|=n\}.\] Choose any $\lambda\in Q_n(\Lambda)$ and write $d(\lambda)=\displaystyle{\sum_{t=1}^{n}}\mathbf{e}_{i_t}$ such that $i_1< i_2 <\cdots < i_n$. Then for each $t\in \{1,2,\ldots,n\}$, define \[F_t^0(\lambda):=\lambda(0,d(\lambda)-\mathbf{e}_{i_t}),~~F_t^1(\lambda):=\lambda(\mathbf{e}_{i_t},d(\lambda)).\] For each $n\in \mathbb{N}$, define $C_n(\Lambda):=\mathbb{Z}Q_n(\Lambda)$, the free abelian group generated by $Q_n(\Lambda)$. Note that $C_0(\Lambda)=\mathbb{Z}\Lambda^0$, $C_1(\Lambda)=\mathbb{Z}\left(\displaystyle{\bigsqcup_{i=1}^{k}}~\Lambda^{\mathbf{e}_i}\right)$, $C_2(\Lambda)=\mathbb{Z}\left(\displaystyle{\bigsqcup_{1\le i<j\le k}}\Lambda^{\mathbf{e}_i+\mathbf{e}_j}\right)$ and so on. Also $C_n(\Lambda)=\{0\}$ for all $n>k$. For $1\le n\le k$, the \emph{$n$-th boundary map} is defined on generators as 
\begin{align*}
\partial_n^\Lambda:C_n(\Lambda)&\longrightarrow C_{n-1}(\Lambda)\\
\lambda&\longmapsto \displaystyle{\sum_{l=0}^{1}}\left(\displaystyle{\sum_{t=1}^{n}}(-1)^{i+l}F_t^l(\lambda)\right),
\end{align*}
and for $n> k$, $\partial_n^\Lambda:=\theta$, the zero homomorphism. Then the sequence of abelian groups and group homomorphisms \[\{0\}\overset{\theta}{\longleftarrow} C_0(\Lambda)\overset{\partial_1^\Lambda}{\longleftarrow} C_1(\Lambda)\overset{\partial_2^\Lambda}{\longleftarrow} C_2(\Lambda)\overset{\partial_3^\Lambda}{\longleftarrow}\cdots \overset{\partial_k^\Lambda}{\longleftarrow} C_k(\Lambda)\overset{\theta}{\longleftarrow} \{0\}\] forms a chain complex (\cite[Lemma 3.3]{HomKPS}). The \emph{$n$-th integral homology} of $\Lambda$ is defined as $H_n(\Lambda):=\Ker(\partial_n^\Lambda)/\image(\partial_{n+1}^\Lambda)$. Obviously, $H_0(\Lambda)=\mathbb{Z}\Lambda^0/\image(\partial_1^\Lambda)$ and $H_n(\Lambda)=\{0\}$ for all $n>k$. 

\subsection{Textile systems and their connection with $2$-graphs}\label{ssec textile systems and 2-graphs}
Textile systems were invented by Nasu \cite{Nasu} in order to study dynamical characteristics of endomorphisms and automorphisms of a topological Markov shift. A \emph{textile system} is defined as a quadruple $T=(F,E,p,q)$, where $F,E$ are directed graphs and $p,q:F\longrightarrow E$ are graph homomorphisms such that the correspondence 
\begin{align*}
    F^1 &\longrightarrow F^0\times F^0 \times E^1\times E^1\\
    f &\longmapsto (s_F(f),r_F(f),p(f),q(f))
\end{align*}
is injective. This amounts to say that we can identify each edge $f$ of $F$ uniquely by the following square, commonly referred as a \emph{Wang tile}:

\[
\begin{tikzpicture}[scale=2.0]
\node[inner sep=0.5pt, circle,draw,fill=black] (A1) at (0,0) {};
\node[inner sep=0.5pt, circle,draw,fill=black] (A2) at (1,0) {};
\node[inner sep=0.5pt, circle,draw,fill=black] (A3) at (0,1) {};
\node[inner sep=0.5pt, circle,draw,fill=black] (A4) at (1,1) {};

\path[->,blue, >=latex,thick] (A1) edge [] node[]{} (A2);
\path[->,blue, >=latex,thick] (A1) edge [] node[]{} (A3);
\path[->,blue, >=latex,thick] (A2) edge [] node[]{} (A4);
\path[->,blue, >=latex,thick] (A3) edge [] node[]{} (A4);

\node at (0.5,-0.2) {$p(f)$};
\node at (0.5,1.2) {$q(f)$};
\node at (-0.3,0.5) {$s_F(f)$};
\node at (1.3,0.5) {$r_F(f)$};

\node at (0.5,0.5) {$T_f$};
\end{tikzpicture}
\]
Note that if $F$ and $E$ are directed graphs such that $F$ has no parallel edges (a pair of edges $f_1,f_2\in F^1$ such that $s_F(f_1)=s_F(f_2)$ and $r_F(f_1)=r_F(f_2)$), then for any two homomorphisms $p,q:F\longrightarrow E$, the quadruple $(F,E,p,q)$ forms a textile system. Therefore, $p,q$ come play their roles only when source and range maps fail to distinguish edges of $F$. 

Corresponding to each textile system $T$, there is a \emph{dual} textile system which is denoted by $\overline{T}=(\overline{F},\overline{E},\overline{p},\overline{q})$. The components of $\overline{T}$ are defined as: \[\overline{F}=(E^1,F^1,q^1,p^1),~~~ \overline{E}=(E^0,F^0,q^0,p^0),\] and  \[\overline{p}=(s_E,s_F),~~\overline{q}=(r_E,r_F).\] Any textile system $T$ gives rise to a two-dimensional shift space over the alphabet $F^1$ which is defined as follows
\[ \mathsf{X}_T:=\{x=(x_\N)_{\N\in \mathbb{Z}^2}\in (F^1)^{\mathbb{Z}^2}~|~r_F(x_\mathbf{n})=s_F(x_{\mathbf{n}+\mathbf{e}_1})~\text{and}~q(x_\mathbf{n})=p(x_{\mathbf{n}+\mathbf{e}_2})~\text{for all}~\mathbf{n}\in \mathbb{Z}^2\}.\] Each element $x\in \mathsf{X}_T$ is called a \emph{textile weaved by} $T$. One can realize $\mathsf{X}_T$ as the two-dimensional matrix shift $\mathsf{X}(A_F,A_{\overline{F}})$, where $A_F$ (resp., $A_{\overline{F}}$) is the edge connection matrix of $F$ (resp., $\overline{F}$) and it plays the role of the horizontal (resp., vertical) transition matrix. In 1999, Johnson and Madden \cite{JM} proved that any two-dimensional shift of finite type can be realized essentially as $\mathsf{X}_T$ for some suitably defined textile system. 

There is also a \emph{first-quadrant} version of $\mathsf{X}_T$. Let \[\mathsf{X}_T^+:=\{x:\mathbb{N}^2\longrightarrow F^1~|~r_F(x_\mathbf{n})=s_F(x_{\mathbf{n}+\mathbf{e}_1})~\text{and}~q(x_\mathbf{n})=p(x_{\mathbf{n}+\mathbf{e}_2})~\text{for all}~\mathbf{n}\in \mathbb{N}^2\}.\] We call $\Xs_T^+$, the \emph{first quadrant two-dimensional shift space} associated with $T$. This is clearly a shift of finite type (see \cite{BGGLPP}). 

Suppose $F,E$ are directed graphs. A graph homomorphism $p:F\longrightarrow E$ is said to have \emph{unique $s$-path lifting} (resp., \emph{unique $r$-path lifting}) if  $p^0(v)=s_E(e)$ (resp., $p^0(v)=r_E(e)$) for some $v\in F^0$ and $e\in E^1$ implies the existence of a unique $f\in F^1$ such that $s_F(f)=v$ (resp., $r_F(f)=v$) and $p^1(f)=e$.

Recall that a commutative square 
\[
\begin{tikzpicture}[scale=1.0]
\node[] (A1) at (0,0) {$P$};
\node[] (A2) at (2,0) {$X$};

\node[] (B1) at (0,-2) {$Y$};
\node[] (B2) at (2,-2) {$Z$};

\path[->, >=latex,thick] (A1) edge [] node[]{} (A2);
\path[->, >=latex,thick] (B1) edge [] node[]{} (B2);

\path[->, >=latex,thick] (A1) edge [] node[]{} (B1);
\path[->, >=latex,thick] (A2) edge [] node[]{} (B2);

\node at (1,0.3) {$p_X$};
\node at (1,-2.3) {$g$};

\node at (-0.3,-1) {$p_Y$};
\node at (2.3,-1) {$f$};
\end{tikzpicture}
\]

in a category $\mathcal{C}$ is called a \emph{pullback square} if given any object $Q$ of $\mathcal{C}$ and morphisms $q_X\in \Hom(Q,X)$, $q_Y\in \Hom(Q,Y)$ such that $f q_X=g q_Y$, there exists a unique morphism $h\in \Hom(Q,P)$ such that $p_X h=q_X$ and $p_Y h=q_Y$. It is interesting to note that the unique path lifting property of a graph homomorphism can be described effectively in terms of pullback squares. For example, it is easy to see that a morphism $p:F\longrightarrow E$ has unique $r$-path lifting (resp., unique $s$-path lifting) if and only if the square in the left (resp., right) below
\[
\begin{tikzpicture}[scale=1]
\node[] (A1) at (0,0) {$F^1$};
\node[] (A2) at (2,0) {$F^0$};
\node[] (A3) at (4,0) {$F^1$};
\node[] (A4) at (6,0) {$F^0$};

\node[] (B1) at (0,-2) {$E^1$};
\node[] (B2) at (2,-2) {$E^0$};
\node[] (B3) at (4,-2) {$E^1$};
\node[] (B4) at (6,-2) {$E^0$};

\path[->, >=latex,thick] (A1) edge [] node[]{} (A2);
\path[->, >=latex,thick] (B1) edge [] node[]{} (B2);
\path[->, >=latex,thick] (A3) edge [] node[]{} (A4);
\path[->, >=latex,thick] (B3) edge [] node[]{} (B4);

\path[->, >=latex,thick] (A1) edge [] node[]{} (B1);
\path[->, >=latex,thick] (A2) edge [] node[]{} (B2);
\path[->, >=latex,thick] (A3) edge [] node[]{} (B3);
\path[->, >=latex,thick] (A4) edge [] node[]{} (B4);

\node at (1,0.3) {$r_F$};
\node at (5,0.3) {$s_F$};
\node at (1,-2.3) {$r_E$};
\node at (5,-2.3) {$s_E$};

\node at (-0.3,-1) {$p^1$};
\node at (2.3,-1) {$p^0$};
\node at (3.7,-1) {$p^1$};
\node at (6.3,-1) {$p^0$};
\end{tikzpicture}
\]
is a pullback square in the category \textbf{Set}. We will heavily use this description in Section \ref{sec 3-graphs and 3D textiles}. 

A textile system $T=(F,E,p,q)$ is called \emph{left-resolving} (in short, $LR$) if the morphism $p$ has unique $r$-path lifting and the morphism $q$ has unique $s$-path lifting. There is a one-to-one correspondence between $LR$-textile systems and $2$-graphs (see \cite[Chapter 3]{Tang} for details). As we aim to set up a similar connection in three-dimensions, we feel that it is good to record this here. 

Suppose $T=(F,E,p,q)$ is an $LR$-textile system. Define a graph $G_T$ with $G_T^0:=E^0$, $G_T^1:=F^0\sqcup E^1$. The range and source maps are defined as follows.

\[r(x):=
	\left\{
	\begin{array}{ll}
		q^0(x) & \mbox{if }  x\in F^0,\\
        r_E(x) & \mbox{if }  x\in E^1;
	\end{array}
	\right.~~~
s(x):=
	\left\{
	\begin{array}{ll}
		p^0(x) & \mbox{if }  x\in F^0,\\
        s_E(x) & \mbox{if }  x\in E^1.
	\end{array}
	\right. 
\]
Define $\col:G_T^1\longrightarrow \{\ea,\eb\}$ by 

\[\col(x):=
	\left\{
	\begin{array}{ll}
		\eb & \mbox{if }  x\in F^0,\\
        \ea & \mbox{if }  x\in E^1.
	\end{array}
	\right. 
\] 
Then $G_T$ becomes a $2$-colored graph with respect to the color map $\col$. Now consider the congruence $\mathcal{R}$ on the free category $G_T^*$ generated by the pairs of bicolored paths $(s_F(f)q^1(f),p^1(f)r_F(f))$; $f\in F^1$. Then the quotient category $\Lambda_T:=G_T^*/\mathcal{R}$ is a $2$-graph (see \cite[Theorem 3.8]{Tang} and \cite[Theorem 4.6]{BGGLPP}). 

On the other hand, if we have a $2$-graph $\Lambda$, then we can define a textile system as follows. Let $F_\Lambda$ and $E_\Lambda$ be directed graphs defined as \[F_\Lambda:=(\Lambda^{\eb},\Lambda^{\ea+\eb},\ev(\ea,\ea+\eb),\ev(0,\eb)),~E_\Lambda:=(\Lambda^0,\Lambda^{\ea},s_\Lambda,r_\Lambda).\] Now consider the graph homomorphisms $p_\Lambda,q_\Lambda:F_\Lambda\longrightarrow E_\Lambda$ with \[p_\Lambda:=(r_\Lambda,\ev(0,\ea)),~q_\Lambda:=(s_\Lambda,\ev(\eb,\ea+\eb)).\] Then once can check that $T_\Lambda:=(F_\Lambda,E_\Lambda,p_\Lambda,q_\Lambda)$ is indeed an $LR$-textile system (\cite[Definition 3.5 \& Lemma 3.6]{Tang}). Moreover, $\Lambda_{T_\Lambda}=\Lambda$ and $T_{\Lambda_T}=T$. 

\section{Algebra associated with a textile system}\label{sec algebra of TS}
In this section, given a textile system $T$ and a field $\mathsf{k}$, we define a certain $\mathsf{k}$-algebra $\mathbf{A}(T)$ using generators and relations based on the textile data. This can be viewed as a textile system analogue of the Kumjian--Pask algebra of a $2$-graph.

\begin{dfn}\label{def the textile algebra}
Let $T=(F,E,p,q)$ be any textile system and $\mathsf{k}$ an arbitrary field. For each $w\in F^0$, we introduce a ghost counterpart $w^*$ and set $G(F^0):=\{w^*~|~w\in F^0\}$. Similarly, for each $e\in E^1$, $e^*$ denotes the ghost copy of $e$ and we set $G(E^1):=\{e^*~|~e\in E^1\}$. We denote by $\mathbf{A}(T)$, the associative $\mathsf{k}$-algebra generated by $E^0\cup F^0\cup E^1\cup G(F^0)\cup G(E^1)$ subject to the following relations:

$(T1)$ $uv=\delta_{u,v}u$ for all $u,v\in E^0$;

$(T2)$ $s_E(e)e=e=er_E(e)$, $r_E(e)e^*=e^*=e^*s_E(e)$ for all $e\in E^1$;

\hspace{0.9cm}$p(w)w=w=wq(w)$, $q(w)w^*=w^*=w^*p(w)$ for all $w\in F^0$;

$(T3)$ $e^*e'=\delta_{e,e'}r_E(e)$ for all $e,e'\in E^1$,

\hspace{0.9cm}$w^*w'=\delta_{w,w'}q(w)$ for all $w,w'\in F^0$;

$(T4)$ $u=\displaystyle{\sum_{e\in s_E^{-1}(u)}}ee^*$ for all $u\in E^0$ with $0<|s_E^{-1}(u)|< \infty$,

\hspace{0.9cm}$v=\displaystyle{\sum_{w\in p^{-1}(v)}} ww^*$ for all $v\in E^0$ with $0<|p^{-1}(v)|<\infty$;

$(T5)$ $we=e'w'$ and $e^*w^*=w'^*e'^*$ whenever there is $f\in F^1$ such that $w=s_F(f)$, $w'=r_F(f)$, $e=q(f)$ and $e'=p(f)$.

The algebra $\mathbf{A}(T)$ is called the \emph{textile algebra} of $T$.
\end{dfn}

\begin{rmk}\label{rem LPA as A(T)}
Any directed graph $E$ can be seen as a two-dimensional textile system $T_E:=(E^1,E^0,r_E,s_E)$ where we view $E^1$ and $E^0$ as $0$-graphs (or, \emph{null graphs}), i.e., $(E^1)^0:=E^1$, $(E^1)^1:=\emptyset$, $(E^0)^0:=E^0$ and $(E^0)^1:=\emptyset$. Note that $\mathbf{A}(T_E)=L_\mathsf{k}(E)$. Thus, Leavitt path algebras are trivially textile algebras.     
\end{rmk}
If $X=E^0\cup F^0\cup E^1\cup G(F^0)\cup G(E^1)$, $\mathbb{F}_\mathsf{k}(\omega(X))$ is the free $\mathsf{k}$-algebra on $X$ and $I$ is the ideal of $\mathbb{F}_\mathsf{k}(\omega(X))$ given by the relations $(T1)-(T5)$, then $\mathbf{A}(T)=\mathbb{F}_\mathsf{k}(\omega(X))/I$. The following proposition showcases the universal property of the algebra $\mathbf{A}(T)$.
\begin{prop}\label{pro universal property of the algebra}
Let $T$ be a textile system. Then the algebra $\mathbf{A}(T)$ has the following universal property: if $A$ is any $\mathsf{k}$-algebra with a family of elements $\{P_u,S_w,T_e,S_{w^*},T_{e^*}~|~u\in E^0,w\in F^0,e\in E^1\}$ satisfying relations $(T1)-(T5)$ with $u\in E^0$, $w\in F^0,e\in E^1$ ($w^*\in G(F^0),e^*\in G(E^1)$) replaced by $P_u$, $S_w$ and $T_e$ ($S_{w^*}$, $T_{e^*}$) respectively\footnote{we call such a family a \emph{textile $T$-family} inside $A$.}, then there is a unique $\mathsf{k}$-algebra homomorphism \[\pi_{P,S,T}:\mathbf{A}(T)\longrightarrow A\] such that \[\pi_{P,S,T}(u)=P_u,~~~\pi_{P,S,T}(w)=S_w~\text{and}~\pi_{P,S,T}(e)=T_e,\] for all $u\in E^0,w\in F^0$ and $e\in E^1$. Furthermore, if $T$ is an $LR$-textile system such that $F$ has no sinks, $p^0:F^0\longrightarrow E^0$ is surjective and $p$ has $s$-path lifting property, then $u\neq 0$ in $\mathbf{A}(T)$ for all $u\in E^0$. 
\end{prop}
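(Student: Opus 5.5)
The universal property is formal. Since $\mathbf{A}(T)=\mathbb{F}_\mathsf{k}(\omega(X))/I$, a textile $T$-family $\{P_u,S_w,T_e,S_{w^*},T_{e^*}\}$ in $A$ determines a unique algebra homomorphism $\mathbb{F}_\mathsf{k}(\omega(X))\to A$ on the free generators. Because the family satisfies $(T1)$--$(T5)$, every generator of $I$ is sent to $0$, so the map factors through the quotient. Uniqueness holds because $\mathbf{A}(T)$ is generated by the images of $X$, and the ghost generators are forced to go to $S_{w^*},T_{e^*}$. (If uniqueness is wanted only from the prescribed values on $u,w,e$, the statement is read with the ghost images included as part of the family.)

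For non-vanishing of the vertices, I will build an explicit textile $T$-family of linear operators on a vector space in which every $P_u$ is non-zero. The universal property then gives $\pi(u)=P_u\neq 0$, hence $u\neq 0$. Take $V$ to be the $\mathsf{k}$-vector space with basis $\Xs_T^+$, and give each $x$ the \emph{base vertex} $b(x):=p^0(s_F(x_{(0,0)}))$. Define the operators as follows.
\begin{enumerate}
\item[$\bullet$] $P_u x=\delta_{u,b(x)}x$.
\item[$\bullet$] $T_{e^*}$ deletes the first column of $x$ if $p(x_{(0,0)})=e$, and is $0$ otherwise.
\item[$\bullet$] $T_e$ adjoins a new first column when $r_E(e)=b(x)$. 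Using the unique $r$-path lifting of $p$, choose the unique $f_0$ with $r_F(f_0)=s_F(x_{(0,0)})$ and $p(f_0)=e$. Inductively choose the unique $f_n$ with $r_F(f_n)=s_F(x_{(0,n)})$ and $p(f_n)=q(f_{n-1})$. This is possible because $q^0(s_F(x_{(0,n-1)}))=p^0(s_F(x_{(0,n)}))$ by vertical compatibility in $x$.
\item[$\bullet$] $S_{w^*}$ deletes the bottom row if $s_F(x_{(0,0)})=w$.
\item[$\bullet$] $S_w$ adjoins a new bottom row $g_0,g_1,\dots$, using the unique $s$-path lifting of $q$: take $s_F(g_0)=w$, $q(g_0)=p(x_{(0,0)})$, and then $s_F(g_m)=r_F(g_{m-1})$, $q(g_m)=p(x_{(m,0)})$. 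The compatibility $q^0(r_F(g_{m-1}))=r_E(p(x_{(m-1,0)}))=s_E(p(x_{(m,0)}))$ is automatic.
\end{enumerate}
The source/range conventions in $(T2)$ may force me to swap which of $P$, $S$, $T$ act by adjoining and which by deleting; I will fix this by checking $(T2)$ first.

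Next I verify the relations. $(T1)$ and $(T2)$ are bookkeeping of base vertices. $(T3)$ holds since deleting what was just adjoined is the identity, and a mismatched deletion gives $0$. $(T4)$ holds since every $x$ with $b(x)=u$ has a unique first-column bottom edge $p(x_{(0,0)})\in s_E^{-1}(u)$ and a unique vertex $s_F(x_{(0,0)})\in p^{-1}(u)$. Then $ee^*$ (resp.\ $ww^*$) is the projection onto those $x$, and these projections sum to $P_u$. For $(T5)$: given $f$ with $w=s_F(f)$, $w'=r_F(f)$, $e=q(f)$, $e'=p(f)$, both $S_wT_e$ and $T_{e'}S_{w'}$ adjoin a column and a row whose corner tile is $f$. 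The uniqueness of the liftings (LR) forces the two resulting arrays to coincide, and the ghost version follows by the analogous deletion argument.

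The main obstacle is showing that $P_u\neq 0$, i.e.\ that for each $u\in E^0$ there is some $x\in\Xs_T^+$ with $b(x)=u$. This is where the extra hypotheses enter. By surjectivity of $p^0$, pick $w$ with $p^0(w)=u$. Since $F$ has no sinks, extend to an infinite path $f_0f_1\cdots$ in $F$ starting at $w$ to serve as the bottom row. Then build each subsequent row above the previous one: given a row $(h_m)$, I need a row $(h'_m)$ with $p(h'_m)=q(h_m)$ and horizontal compatibility. I will choose $h'_0$ by the $s$-path lifting of $p$ from a vertex over $s_E(q(h_0))$; the no-sinks and surjectivity assumptions supply such a vertex. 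The remaining entries of the row are then determined by unique liftings of $p$ along the $E$-path $q(h_0)q(h_1)\cdots$. Checking carefully that the starting vertex can always be chosen consistently is the delicate point, and I would try first to obtain it from the $s$-path lifting of $p$ applied to $q^0(s_F(h_0))$.
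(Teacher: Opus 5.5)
Your proposal is correct and follows the paper's proof essentially verbatim: the universal property comes from factoring the free-algebra map through $I$, and the non-vanishing of vertices comes from the same textile $T$-family on $\mathsf{k}\Xs_T^+$. In both, the real generators adjoin a first column or a bottom row via the unique liftings, the ghosts act by $\sigma_{\mathbf{e}_1},\sigma_{\mathbf{e}_2}$, and your assignment of which operators adjoin and which delete is already the right one. The step you flag as delicate is not delicate. The only constraints on a new row $(h'_m)$ above $(h_m)$ are $p(h'_m)=q(h_m)$ and horizontal concatenation, so any $v\in p^{-1}(s_E(q(h_0)))$ (nonempty by surjectivity of $p^0$) works as the starting vertex. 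Each $h'_m$ is then obtained from the $s$-path lifting of $p$ at $r_F(h'_{m-1})$; only existence is used, not uniqueness, so the row is not \emph{determined} as you wrote.
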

\begin{proof}
By the universal property of the free algebra $\mathbb{F}_\mathsf{k}(\omega(X))$, there is a unique $\mathsf{k}$-algebra homomorphism $f_{P,S,T}:\mathbb{F}_\mathsf{k}(\omega(X))\longrightarrow A$ such that $f_{P,S,T}(u):=P_u$, $f_{P,S,T}(w):=S_w$, $f_{P,S,T}(e):=T_e$, $f_{P,S,T}(w^*):=S_{w^*}$ and $f_{P,S,T}(e^*):=T_{e^*}$. By the hypothesis, $I\subseteq \ker(f_{P,S,T})$. Thus $f_{P,S,T}$ induces a well-defined $\mathsf{k}$-algebra homomorphism $\pi_{P,S.T}:\mathbf{A}(T)\longrightarrow A$ such that \[\pi_{P,S,T}(u)=P_u,~~~\pi_{P,S,T}(w)=S_w~\text{and}~\pi_{P,S,T}(e)=T_e,\] for all $u\in E^0,w\in F^0$ and $e\in E^1$. The uniqueness of $\pi_{P,S,T}$ follows from the fact that any algebra homomorphism from $\mathbb{F}_\mathsf{k}(\omega(X))$ to $A$ is uniquely determined by the images of elements of $X$. 

For the remaining part, first observe that under the given conditions, $\mathsf{X}_T^+$ is nonempty. We consider $\mathsf{k}\mathsf{X}_T^+$, the free $\mathsf{k}$-vector space with basis $\mathsf{X}_T^+$. We construct a textile $T$-family inside the $\mathsf{k}$-algebra $\End(\mathsf{k}\mathsf{X}_T^+)$. For this we first make an easy observation. Let $w\in F^0$ and $x\in \mathsf{X}_T^+$ be such that $q(w)=p(s_F(x_{(0,0)}))$. By repeated applications of the unique $s$-path lifting of $q$, it follows that there is a unique $x^w\in \mathsf{X}_T^+$ such that $s_F(x^w_{(0,0)})=w$ and $\sigma_{\mathbf{e}_2}(x^w)=x$. Similarly, for any $e\in E^1$ and $x\in \mathsf{X}_T^+$ with $r_E(e)=s_E(p(x_{(0,0)}))$, there is a unique $x^e\in \mathsf{X}_T^+$ such that $p(x^e_{(0,0)})=e$ and $\sigma_{\mathbf{e}_1}(x^e)=x$. Now for $v\in E^0$, $w\in F^0$ and $e\in E^1$, there are unique linear endomorphisms $P_v,S_w,T_e,S_{w^*},T_{e^*}\in \End(\mathsf{k}\mathsf{X}_T^+)$ such that 
$$P_v(x):=
	\left\{
	\begin{array}{ll}
		x  & \mbox{if } p(s_F(x_{(0,0)}))=v, \\
		0 & \mbox{otherwise};
	\end{array}
	\right.$$

$$S_w(x):=
	\left\{
	\begin{array}{ll}
		x^w  & \mbox{if } q(w)=p(s_F(x_{(0,0)})), \\
		0 & \mbox{otherwise};
	\end{array}
	\right.$$

$$T_e(x):=
	\left\{
	\begin{array}{ll}
		x^e  & \mbox{if } r_E(e)=s_E(p(x_{(0,0)})), \\
		0 & \mbox{otherwise};
	\end{array}
	\right.$$

$$S_{w^*}(x):=
	\left\{
	\begin{array}{ll}
		\sigma_{\mathbf{e}_2}(x)  & \mbox{if } s_F(x_{(0,0)})=w, \\
		0 & \mbox{otherwise};
	\end{array}
	\right.$$

$$T_{e^*}(x):=
	\left\{
	\begin{array}{ll}
		\sigma_{\mathbf{e}_1}(x)  & \mbox{if } p(x_{(0,0)})=e, \\
		0 & \mbox{otherwise}.
	\end{array}
	\right.$$
A routine verification yields that $(P,S,T)$ forms a textile $T$-family inside $\End(\mathsf{k}\mathsf{X}_T^+)$. Also, it is not hard to observe that for each $v\in E^0$ there is some $x\in \mathsf{X}_T^+$ such that $p(s_F(x_{(0,0)}))=v$. Therefore, $P_v\neq 0$ for all $v\in E^0$. Now, by the universal property of $\mathbf{A}(T)$, there is a unique $\mathsf{k}$-algebra homomorphism $\pi_{P,S,T}:\mathbf{A}(T)\longrightarrow \End(\mathsf{k}\mathsf{X}_T^+)$ such that \[\pi_{P,S,T}(u)=P_u,~~~\pi_{P,S,T}(w)=S_w~\text{and}~\pi_{P,S,T}(e)=T_e,\] for all $u\in E^0,w\in F^0$ and $e\in E^1$. It follows that $u\neq 0$ for all $u\in E^0$. 
\end{proof}

When multiplying two elements of $\mathbf{A}(T)$, we may come across terms like $e^*e',w^*w',e^*w, w^*e$ where $w,w'\in F^0$ and $e,e'\in E^1$. The first two can be dealt with using relations $(T3)$. The following lemma tells that under certain situations, we can effectively reduce the other two.

\begin{lem}\label{lem swapping real and ghost}
Let $T=(F,E,p,q)$ be a textile system such that $F,E$ are finite graphs, $E$ has no sink, $p^0:F^0\longrightarrow E^0$ is surjective, $p$ has $r$-path lifting and $q$ has $s$-path lifting. Let $w\in F^0$ and $e\in E^1$ be such that $s_E(e)=p(w)$. Then 

$$e^*w:=
	\left\{
	\begin{array}{ll}
		\displaystyle{\sum_{f\in p^{-1}(e)\cap s_F^{-1}(w)}}r_F(f)q(f)^*  & \mbox{if } p^{-1}(e)\cap s_F^{-1}(w)\neq \emptyset, \\
		0 & \mbox{otherwise};
	\end{array}
	\right.$$
and 
$$w^*e:=
	\left\{
	\begin{array}{ll}
		\displaystyle{\sum_{f\in p^{-1}(e)\cap s_F^{-1}(w)}}q(f)r_F(f)^*  & \mbox{if } p^{-1}(e)\cap s_F^{-1}(w)\neq \emptyset, \\
		0 & \mbox{otherwise};
	\end{array}
	\right.$$
Also we have $ee^*ww^*=ww^*ee^*$. 
\end{lem}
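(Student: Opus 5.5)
The plan is to compute each of $e^*w$ and $w^*e$ by inserting a vertex idempotent on the appropriate side. We expand that idempotent as a finite sum using $(T4)$. Each summand is then rewritten with the commutation relation $(T5)$ and killed or simplified by $(T3)$. Throughout, I read the $r$- and $s$-path lifting hypotheses as the \emph{unique} lifting properties defined in Section \ref{sec preliminaries}, i.e.\ $T$ is $LR$.

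For $e^*w$, I will first use $(T2)$ to write $e^*w=e^*w\,q(w)$. Since $E$ is finite with no sinks, $0<|s_E^{-1}(q(w))|<\infty$, so the first relation of $(T4)$ gives
\[e^*w=\sum_{e'\in s_E^{-1}(q(w))} e^*we'e'^*.\]
By unique $s$-path lifting of $q$, the map $f\mapsto q(f)$ is a bijection from $s_F^{-1}(w)$ onto $s_E^{-1}(q(w))$. For the $f$ corresponding to $e'$, relation $(T5)$ gives $we'=p(f)r_F(f)$. Then $(T3)$ gives
\[e^*p(f)r_F(f)=\delta_{e,p(f)}\,r_E(e)\,r_F(f)=\delta_{e,p(f)}\,r_F(f),\]
where the last step uses $p(r_F(f))=r_E(p(f))$ and $(T2)$. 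Hence
\[e^*w=\sum_{f\in p^{-1}(e)\cap s_F^{-1}(w)} r_F(f)\,q(f)^*,\]
and this sum is empty (so $e^*w=0$) exactly when $p^{-1}(e)\cap s_F^{-1}(w)=\emptyset$. The hypothesis $s_E(e)=p(w)$ is the compatibility that makes the summands nonzero.

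For $w^*e$, I will write $w^*e=w^*e\,r_E(e)$. Since $p^0$ is surjective and $F$ is finite, $0<|p^{-1}(r_E(e))|<\infty$, so the second relation of $(T4)$ gives
\[w^*e=\sum_{w''\in p^{-1}(r_E(e))} w^*ew''w''^*.\]
Unique $r$-path lifting of $p$ gives, for each such $w''$, a unique $f$ with $r_F(f)=w''$ and $p(f)=e$. This is a bijection onto $p^{-1}(e)$. By $(T5)$, $ew''=s_F(f)q(f)$, and by $(T3)$,
\[w^*s_F(f)q(f)=\delta_{w,s_F(f)}\,q(w)\,q(f)=\delta_{w,s_F(f)}\,q(f).\]
Summing yields the stated formula for $w^*e$.

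For the final identity, I will substitute the first formula:
\[ee^*ww^*=\sum_f e\,r_F(f)\,q(f)^*w^*.\]
Similarly, substituting the second formula,
\[ww^*ee^*=\sum_f w\,q(f)\,r_F(f)^*e^*,\]
with both sums over the same index set $p^{-1}(e)\cap s_F^{-1}(w)$. Relation $(T5)$ applied to each such $f$ gives $e\,r_F(f)=w\,q(f)$ and $q(f)^*w^*=r_F(f)^*e^*$, so the two sums agree term by term.

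The only real obstacle is the bookkeeping. One must check that the bijections induced by the unique liftings correctly reindex the $(T4)$ sums. One must also verify that the finiteness, no-sink and surjectivity hypotheses are exactly what is needed to invoke $(T4)$ at the vertices $q(w)$ and $r_E(e)$.
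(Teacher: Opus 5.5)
Your computation is correct when $T$ is $LR$, but reading the hypotheses as \emph{unique} liftings strengthens the lemma. In this paper ``has $s$-path lifting'' means only that lifts exist, and uniqueness is always stated separately. Two places show this. In Lemma~\ref{lem other pullbacks}, ``$q_1$ has $s$-path lifting'' is \emph{upgraded} to unique lifting. Remark~\ref{rem necessity of injectivity} would be empty under your reading, because unique $s$-path lifting of $q$ alone already makes $s_F\times r_F\times p^1\times q^1$ injective.

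The step that fails is your reindexing ``$f\mapsto q(f)$ is a bijection $s_F^{-1}(w)\to s_E^{-1}(q(w))$''. With mere existence, several tiles $f$ may lie over the same $e'$. Then $\sum_{e'}e^*we'e'^*$ has one term per $e'$, while the claimed formula has one term per $f$, and ``the $f$ corresponding to $e'$'' is not defined. Your use of $r$-path lifting of $p$ for $w^*e$ has the same problem.

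The paper's proof avoids this by expanding idempotents on \emph{both} sides. It writes $e^*w=r_E(e)\,e^*w\,q(w)=\sum w'(w'^*e^*we')e'^*$, summing over $(w',e')\in p^{-1}(r_E(e))\times s_E^{-1}(q(w))$. Each summand is handled as follows:
\begin{itemize}
\item If some $f$ has $s_F(f)=w$, $q(f)=e'$, $p(f)=e$ and $r_F(f)=w'$, then $(T5)$ with that $f$ turns the summand into $w'e'^*$.
\item Otherwise, apply $(T5)$ with \emph{any} lift $f_0$ of $e'$ at $w$ (existence is enough), and $(T3)$ kills the summand.
\end{itemize}
Injectivity of $s_F\times r_F\times p^1\times q^1$ then makes $f\mapsto(r_F(f),q(f))$ a bijection from $p^{-1}(e)\cap s_F^{-1}(w)$ onto the surviving pairs. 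The case of $w^*e$ is the same.

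Your one-sided sum can also be salvaged, but only with a degenerate-case argument you do not give. For example, suppose $f_1\neq f_2$ lie over $e'$ and both have $p(f_i)=e$. Then $r_F(f_1)=e^*we'=r_F(f_2)$ in $\mathbf{A}(T)$, while $r_F(f_1)\neq r_F(f_2)$ in $F^0$. So $(T3)$ gives $q(r_F(f_1))=r_F(f_1)^*r_F(f_2)=0$, and all these terms vanish. The case where a lift with $p\neq e$ coexists with one with $p=e$ is similar.

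Under the $LR$ assumption your argument is sound and a little leaner than the paper's: it needs neither the Wang-tile injectivity nor, for $e^*w$, surjectivity of $p^0$. Your proof of $ee^*ww^*=ww^*ee^*$ is the same as the paper's.
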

\begin{proof}
We have 
\begin{align*}
e^*w&=r_E(e)e^*wq(w)\\
&=\left(\displaystyle{\sum_{w'\in p^{-1}(r_E(e))}} w'w'^*\right)e^*w \left(\displaystyle{\sum_{e'\in s_E^{-1}(q(w))}} e'e'^*\right)\\
&=\displaystyle{\sum_{(w',e')\in p^{-1}(r_E(e))\times s_E^{-1}(q(w))}} w'(w'^*e^*)(we')e'^*
\end{align*}
Fix any pair $(w',e')\in p^{-1}(r_E(e))\times s_E^{-1}(q(w))$. Since $q$ has $s$-path lifting, $s_F^{-1}(w)\cap q^{-1}(e')\neq \emptyset$. Note that $(w'^*e^*)(we')\neq 0$ if and only if there is some $f\in s_F^{-1}(w)\cap q^{-1}(e')$ such that $p(f)=e$ and $r_F(f)=w'$. In this case, the corresponding summand reduces to $w'e'^*=r_F(f)q(f)^*$. From this observation, it follows that $e^*w\neq 0$ if and only if $p^{-1}(e)\cap s_F^{-1}(w)\neq \emptyset$, in which case \[e^*w=\displaystyle{\sum_{f\in p^{-1}(e)\cap s_F^{-1}(w)}}r_F(f)q(f)^*.\]
The case of $w^*e$ is exactly the same. Using $(T5)$, now we have 
\begin{align*}
    ee^*ww^*&= e\left(\displaystyle{\sum_{f\in p^{-1}(e)\cap s_F^{-1}(w)}} r_F(f)q(f)^*\right)w^*\\
    &= \displaystyle{\sum_{f\in p^{-1}(e)\cap s_F^{-1}(w)}} er_F(f)q(f)^*w^*\\
    &= \displaystyle{\sum_{f\in p^{-1}(e)\cap s_F^{-1}(w)}} wq(f)r_F(f)^*e^*\\
    &= w\left(\displaystyle{\sum_{f\in p^{-1}(e)\cap s_F^{-1}(w)}} q(f)r_F(f)^*\right) e^*=ww^*ee^*.
\end{align*}
\end{proof}
\begin{rmk}\label{rem necessity of injectivity}
The fact that each edge $f\in F^1$ is uniquely presented by the Wang tile with sides $s_F(f),p(f)$, $r_F(f),q(f)$ has a significant role in the above result. Without this, the correspondence $f\longmapsto (r_F(f),q(f))$ from $p^{-1}(e)\cap s_F^{-1}(w)$ to the set \[\{(w',e')\in p^{-1}(r_E(e))\times s_E^{-1}(q(w))~|~w'^*e^*we'\neq 0\},\] would not be injective.     
\end{rmk}
We next define a certain subalgebra of $\mathbf{A}(T)$. For this, we observe that any $\alpha\in \omega(F^0\cup E^1)$ which is nonzero in $\mathbf{A}(T)$, can be expressed as $\alpha=e_1e_2\cdots e_m w_1w_2\cdots w_n$ where $r_E(e_i)=s_E(e_{i+1})$ for $i=1,2,\ldots,m-1$, $q(w_j)=p(w_{j+1})$ for $j=1,2,\ldots,n-1$ and $r_E(e_m)=p(w_1)$. Denote \[\alpha^*=w_n^*w_{n-1}^*\cdots w_1^* e_m^* e_{m-1}^*\cdots e_1^*.\] with the obvious understanding that $v^*=v$ for all $v\in E^0$. We define $\mathfrak{D}(T)$ to be the subalgebra of $\mathbf{A}(T)$ generated by elements of the form $\alpha\alpha^*$ where $\alpha\in \omega(F^0\cup E^1)$ such that $\alpha\neq 0$ in $\mathbf{A}(T)$. Using similar arguments used in the proof of Lemma \ref{lem swapping real and ghost}, it is not hard to see that $\mathfrak{D}(T)$ is a commutative subalgebra of $\mathbf{A}(T)$. We call $\mathfrak{D}(T)$, the \emph{diagonal subalgebra} of $\mathbf{A}(T)$. 

We can view $\mathbf{A}(T)$ as a $\mathbb{Z}^2$-graded algebra. To see this, assign degrees to the generators of $\mathbb{F}_\mathsf{k}(\omega(X))$ as follows: \[deg(v):=0,~~deg(w):=\mathbf{e}_2,~~deg(e):=\mathbf{e}_1,~~deg(w^*):=-\mathbf{e}_2,~~deg(e^*):=-\mathbf{e}_1,\] for all $v\in E^0$, $w\in F^0$ and $e\in E^1$. This induces a $\mathbb{Z}^2$-grading on $\mathbb{F}_\mathsf{k}(\omega(X))$. Note that the relations $(T1)-(T5)$ are all homogeneous under this grading, and hence the relational ideal $I$ is a graded ideal. This makes $\mathbf{A}(T)$ a $\mathbb{Z}^2$-graded algebra. In Section \ref{sec the diagonal graph}, we also impose a certain $\mathbb{Z}$-grading on $\mathbf{A}(T)$.

\begin{rmks}\label{rem connecting with other algebras}
(\emph{Connections of $\mathbf{A}(T)$ with other algebras})



$(i)$ \textbf{Kumjian--Pask algebra}: If $T$ is an $LR$-textile system and $E,\overline{E}$ are row-finite without sinks, then the algebra $\mathbf{A}(T)$ is isomorphic to the Kumjian--Pask algebra $\KP(\Lambda_T)$ as $\mathbb{Z}^2$-graded algebras. Indeed, if $(Q,T)$ denotes the Kumjian--Pask $\Lambda_T$-family in $\KP(\Lambda_T)$, then $\{Q_v,T_w,T_e,T_{w^*},T_{e^*}~|~v\in E^0,w\in F^0,e\in E^1\}$ is easily seen to be a textile $T$-family inside $\KP(\Lambda_T)$. This, in view of Proposition \ref{pro universal property of the algebra}, gives a unique algebra homomorphism $\pi:\mathbf{A}(T)\longrightarrow \KP(\Lambda_T)$ such that $\pi(v)=Q_v$, $\pi(w)=T_w$ and $\pi(e)=T_e$. On the other hand, if we set \[P_v:=v,\] and \[S_\lambda:=\lambda(0,\ea)\lambda(\ea,2\ea)\cdots \lambda((m-1)\ea,m\ea)\lambda(m\ea,m\ea+\eb)\cdots\lambda(m\ea+(n-1)\eb,m\ea+n\eb)\] for each $v\in \Lambda_T^0$ and $\lambda\in \Lambda_T^{\neq 0}$ with $d(\lambda)=(m,n)$, then $\{P_v,S_\lambda,S_{\Lambda}^*~|~v\in \Lambda_T^0,\lambda\in \Lambda_T^{\neq 0}\}$ is a Kumjian--Pask $\Lambda_T$-family inside $\mathbf{A}(T)$. By \cite[Theorem 3.4]{Pino}, there is a unique algebra homomorphism $\eta:\KP(\Lambda_T)\longrightarrow \mathbf{A}(T)$ with $\eta(Q_v)=P_v$, $\eta(T_\lambda)=S_\lambda$ and $\eta(T_{\lambda^*})=S_\lambda^*$. One can easily check that $\pi$ and $\eta$ are inverse of each other.  

$(ii)$ \textbf{Quotient of a category algebra}: Define a directed graph $\mathsf{G}(T)$ as follows: \[\mathsf{G}(T)^0:=E^0,~~\mathsf{G}(T)^1:=F^0\cup E^1\cup G(F^0)\cup G(E^1),\] 
\[r(x):=
	\left\{
	\begin{array}{llll}
		q(x)  & \mbox{if } x\in F^0, \\
		r_E(x) & \mbox{if }  x\in E^1,\\
         p(w) & \mbox{if }  x=w^*\in G(F^0),\\
         s_E(e) & \mbox{if }  x=e^*\in G(E^1),\\
	\end{array}
	\right.~~~
s(x):=
	\left\{
	\begin{array}{llll}
		p(x)  & \mbox{if } x\in F^0, \\
		s_E(x) & \mbox{if }  x\in E^1,\\
         q(w) & \mbox{if }  x=w^*\in G(F^0),\\
         r_E(e) & \mbox{if }  x=e^*\in G(E^1).\\
	\end{array}
	\right. 
\] Let $F(\mathsf{G}(T))$ be the \emph{free category} (path category) of $\mathsf{G}(T)$. For each pair of vertices $u,v\in E^0$, define a binary relation $R_{u,v}$ on $\Hom(u,v)$ by \[R_{u,v}:=\{(we,e'w'),(e^*w^*,w'^*e'^*)\in \Hom(u,v)^2~|~w=s_F(f),e=q(f),w'=r_F(f),e'=p(f)~\text{for some}~f\in F^1\}.\] Suppose $\mathcal{R}$ is the least congruence on $F(\mathsf{G}(T))$ containing $R=(R_{u,v})_{(u,v)\in E^0\times E^0}$. Consider the quotient category $\mathcal{C}:=F(\mathsf{G}(T))/\mathcal{R}$ (see \cite{Maclane}). Then it is easy to realize that the textile algebra $\mathbf{A}(T)$ over a field $\mathsf{k}$ is the quotient of the category algebra $\mathsf{k}\mathcal{C}$ modulo the ideal generated by the relations $(T3)$ and $(T4)$. 

$(iii)$ \textbf{Leavitt path algebras of $F$ and $\overline{F}$}: Suppose $T$ is $LR$. If in addition $q$ is injective, then $q:F\longrightarrow E$ is a complete graph morphism. In that case, we observe that $\{q(w),q(f),q(f)^*~|~w\in F^0,f\in F^1\}$ is a Leavitt $F$-family inside $\mathbf{A}(T)$. So there is a unique $\mathsf{k}$-algebra homomorphism $\mu:L(F)\longrightarrow \mathbf{A}(T)$ such that \[\mu(w)=q(w),~~\mu(f)=q(f),~~\mu(f^*)=q(f)^*\] for all $w\in F^0$ and $f\in F^1$. Similarly, if $\overline{q}:\overline{F}\longrightarrow \overline{E}$ is injective, then we have a unique $\mathsf{k}$-algebra homomorphism $\eta:L(\overline{F})\longrightarrow \mathbf{A}(T)$ with the property that \[\eta(e)=r_E(e),~~\eta(f)=r_F(f),~~\eta(f^*)=r_F(f)^*\] for all $e\in \overline{F}^0=E^1$ and $f\in \overline{F}^1=F^1$. Combining these two and using the universal property of direct sum, we have a unique homomorphism \[\Psi:L(F)\oplus L(\overline{F})\longrightarrow \mathbf{A}(T)\] such that $\Psi((x,0))=\mu(x)$ and $\Psi((0,y))=\eta(y)$ for all $x\in L(F)$ and $y\in L(\overline{F})$. Rephrasing relations $(T5)$ in terms of $\Psi$, we have $s_F(f)\Psi((f,0))=s_{\overline{F}}(f)\Psi((0,f))$ for all $f\in F^1$.
\end{rmks}
\section{The diagonal graph}\label{sec the diagonal graph}
In this section, we show that the two-dimensional first-quadrant shift of finite type $\mathsf{X}_T^+$ associated with an $LR$-textile system can be determined completely by a one-dimensional one-sided shift of finite type. This is quite in line with the fact that the two-dimensional subshift of a $C^*$-textile dynamical system can be described entirely from the one-dimensional subshift consisting of all admissible diagonal sequences (see \cite[\S 3]{Matsumoto}).  We adopt the following definition from \cite{Matsumoto}. 

\begin{dfn}\label{def diagonal property}
($cf.$ \cite[Definition 3.1]{Matsumoto}) Let $\mathsf{X}$ be a two-dimensional first-quadrant shift of finite type. We say that $\mathsf{X}$ satisfies the \emph{diagonal property} if for any two arrays $x=(x_{(i,j)})$, $y=(y_{(i,j)})$ in $\mathsf{X}$, $x_{(i,j)}=y_{(i,j)}$ and $x_{(i+1,j+1)}=y_{(i+1,j+1)}$ imply $x_{(i+1,j)}=y_{(i+1,j)}$ and $x_{(i,j+1)}=y_{(i,j+1)}$. 
\end{dfn}

The next lemma sets the stage for the introduction of a suitable one-sided one-dimensional shift of finite type describing $\mathsf{X}_T^+$.
\begin{lem}\label{lem LR textile has the diagonal property}
Let $T$ be an $LR$-textile system. Then $\mathsf{X}_T^+$ satisfies the diagonal property.  
\end{lem}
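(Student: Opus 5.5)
The argument uses only the two halves of the $LR$ hypothesis: unique $r$-path lifting for $p$ and unique $s$-path lifting for $q$. It also uses that the local constraints of $\mathsf{X}_T^+$ are $r_F(x_\N)=s_F(x_{\N+\ea})$ and $q(x_\N)=p(x_{\N+\eb})$. Fix $x,y\in\mathsf{X}_T^+$ and $(i,j)\in\mathbb{N}^2$ with $x_{(i,j)}=y_{(i,j)}=:f$ and $x_{(i+1,j+1)}=y_{(i+1,j+1)}=:g$. We must show that the two "off-diagonal" tiles agree. The idea is that the diagonal tiles $f$ and $g$ determine the boundary data of the off-diagonal tiles, and then unique path lifting pins those tiles down completely.

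\textbf{The tile at $(i,j+1)$.} Put $h=x_{(i,j+1)}$ and $h'=y_{(i,j+1)}$.
\begin{enumerate}
\item[$\bullet$] From the vertical constraint at $(i,j)$, we have $p(h)=q(f)=p(h')$.
\item[$\bullet$] From the horizontal constraint at $(i,j+1)$, we have $r_F(h)=s_F(x_{(i+1,j+1)})=s_F(g)$, and likewise $r_F(h')=s_F(g)$.
\end{enumerate}
So $h$ and $h'$ are edges of $F$ with the same range vertex $s_F(g)$ and the same image $q(f)\in E^1$ under $p$. By uniqueness in the $r$-path lifting property of $p$ (equivalently, the left pullback square in Section \ref{ssec textile systems and 2-graphs}), it follows that $h=h'$.

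\textbf{The tile at $(i+1,j)$.} Put $k=x_{(i+1,j)}$ and $k'=y_{(i+1,j)}$.
\begin{enumerate}
\item[$\bullet$] From the horizontal constraint at $(i,j)$, we have $s_F(k)=r_F(f)=s_F(k')$.
\item[$\bullet$] From the vertical constraint at $(i+1,j)$, we have $q(k)=p(g)=q(k')$.
\end{enumerate}
So $k$ and $k'$ share the source vertex $r_F(f)$ and the $q$-image $p(g)$. Uniqueness in the $s$-path lifting property of $q$ then gives $k=k'$.

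There is no real obstacle here beyond bookkeeping. The one point needing care is the orientation of the constraints: one must check which neighbour sees which side of a Wang tile, so that the $(i,j+1)$ tile is controlled by $p$ with a fixed range and the $(i+1,j)$ tile by $q$ with a fixed source, exactly matching the $LR$ conditions. Notably, we never need the injectivity condition in the definition of a textile system, since uniqueness of lifts already identifies the edges.
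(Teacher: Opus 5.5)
Your proposal is correct and follows essentially the same argument as the paper: the tile at $(i,j+1)$ is pinned down by unique $r$-path lifting of $p$, since it has $p$-image $q(x_{(i,j)})$ and range $s_F(x_{(i+1,j+1)})$, and the tile at $(i+1,j)$ by unique $s$-path lifting of $q$. The only cosmetic difference is that the paper first checks explicitly that $p(s_F(x_{(i+1,j+1)}))=r_E(q(x_{(i,j)}))$ before invoking the lifting property, whereas you appeal directly to uniqueness; this is legitimate because the edge $h$ itself witnesses that compatibility.
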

\begin{proof}
Suppose $x,y\in \mathsf{X}_T^+$ are such that $x_{(i,j)}=y_{(i,j)}$ and $x_{(i+1,j+1)}=y_{(i+1,j+1)}$ for some $(i,j)\in \mathbb{N}^2$. Let $e:=q(x_{(i,j)})=q(y_{(i,j)})$ and $v:=s_F(x_{(i+1,j+1)})=s_F(y_{(i+1,j+1)})$. Note that, \[p(v)=s_E(p(x_{(i+1,j+1)}))=s_E(q(x_{(i+1,j)}))=q(s_F(x_{(i+1,j)}))=q(r_F(x_{(i,j)}))=r_E(e).\] By unique $r$-path lifting of $p$, there should exist a unique $f\in F^1$ such that $p(f)=e$ and $r_F(f)=v$. But we have, $p(x_{(i,j+1)})=e=p(y_{(i,j+1)})$ and $r_F(x_{(i,j+1)})=v=r_F(y_{(i,j+1)})$. Therefore, $x_{(i,j+1)}=y_{(i,j+1)}$. Similarly, using $s$-path lifting of $q$, we have $x_{(i+1,j)}=y_{(i+1,j)}$. 
\end{proof}

We now consider a certain one-sided subshift of the one-sided full shift $(F^1)^\mathbb{N}$. Define \[\mathsf{X}_{\Delta_T}^+:=\{y\in (F^1)^\mathbb{N}~|~r_E(q(y_n))=s_E(p(y_{n+1})\}.\] Thus $\mathsf{X}_{\Delta_T}^+$ is the collection of all admissible diagonal sequences in $\mathsf{X}_T^+$. When $F^1$ is finite, it is clearly a shift of finite type, as it can be described by a finite set of forbidden blocks \[\mathcal{F}:=\{fg\in \mathcal{B}_2(F^1)~|~r_E(q(f))\neq s_E(p(g))\}.\] Just Like any other one-sided shift of finite type in one-dimension, $\mathsf{X}_{\Delta_T}^+$ can be realized essentially as the infinite path space of a finite essential directed graph, in fact, if we consider the directed graph \[\Delta_T:=(E^0,F^1,r_E\circ q, s_E\circ p),\] then $\mathsf{X}_{\Delta_T}^+=\Delta_T^\infty$. We call $\Delta_T$, the \emph{diagonal graph} of $T$.  

Obviously, for any $x\in \mathsf{X}_T^+$, the diagonal sequence $(x_{(i,i)})_{i\in \mathbb{N}}\in \mathsf{X}_{\Delta_T}^+$. On the other hand, using Lemma \ref{lem LR textile has the diagonal property} and a similar line of argument like \cite[Lemma 3.2]{Matsumoto}, it is easy to observe that any sequence $y\in \mathsf{X}_{\Delta_T}^+$ can be uniquely extended to a first-quadrant two-dimensional array $x\in \mathsf{X}_T^+$ such that $y_i=x_{(i,i)}$ for all $i\in \mathbb{N}$. Thus we can say that for an $LR$-textile system $T$, $\mathsf{X}_{\Delta_T}^+$ determines $\mathsf{X}_T^+$. We now characterize, in terms of the graph $\Delta_T$ as well as the data of the textile system, when $\mathsf{X}_T^+$ is nonempty as a shift space. Probably, this is a good time to establish a relationship between the adjacency matrix of $\Delta_T$ and the adjacency matrices of $E$ and $\overline{E}$. 
\begin{lem}\label{lem conncetion between matrices}
Let $T=(F,E,p,q)$ be an $LR$-textile system such that $F,E$ are finite. Let $\overline{T}=(\overline{F},\overline{E},\overline{p},\overline{q})$ be the dual textile system. Then the following hold:

$(i)$ $A(\Delta_T)=A(E)A(\overline{E})$;

$(ii)$ $A_{\Delta_T}=A_F A_{\overline{F}}$ where $A_{\Delta_T}$, $A_F$ and $A_{\overline{F}}$ are the edge connection matrices of $\Delta_T$, $F$ and $\overline{F}$ respectively.  
\end{lem}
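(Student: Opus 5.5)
Both identities come from counting, with the bijections supplied by the left-resolving hypotheses. In each case I will compare entries: I will exhibit, for every pair of indices, a bijection between the set counted by the entry of the left-hand side and the set counted by the entry of the product on the right.

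First I fix conventions, following the paper's $4$-tuple notation $(\text{vertices},\text{edges},r,s)$.
\begin{itemize}
\item $A(G)(u,v)$ is the number of edges $x$ with $s(x)=u$ and $r(x)=v$.
\item $A_G(x,y)=1$ exactly when $r(x)=s(y)$.
\item $\Delta_T=(E^0,F^1,r_E\circ q,s_E\circ p)$.
\item $\overline{E}=(E^0,F^0,q^0,p^0)$, so an edge $w\in F^0$ goes from $p(w)$ to $q(w)$.
\item $\overline{F}=(E^1,F^1,q^1,p^1)$, so an edge $f\in F^1$ goes from $p(f)$ to $q(f)$.
\end{itemize}

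For $(i)$, fix $u,v\in E^0$. The $(u,v)$-entry of $A(E)A(\overline{E})$ counts the pairs $(e,w)\in E^1\times F^0$ with $s_E(e)=u$, $r_E(e)=p(w)$ and $q(w)=v$. I will show that $f\mapsto (p(f),r_F(f))$ is a bijection onto this set from $\{f\in F^1 : s_E(p(f))=u,\ r_E(q(f))=v\}$, which is what $A(\Delta_T)(u,v)$ counts.
\begin{itemize}
\item The map lands in the right set: since $p$ is a graph homomorphism, $r_E(p(f))=p(r_F(f))$. Since $q$ is a graph homomorphism, $q(r_F(f))=r_E(q(f))=v$.
\item Given a pair $(e,w)$ with $r_E(e)=p(w)$, unique $r$-path lifting of $p$ yields exactly one $f\in F^1$ with $p(f)=e$ and $r_F(f)=w$. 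That $f$ satisfies $s_E(p(f))=u$ and $r_E(q(f))=q(r_F(f))=q(w)=v$.
\end{itemize}
Together these give the bijection.

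For $(ii)$, fix $f,g\in F^1$. By the conventions above, $A_F(f,h)=1$ iff $r_F(f)=s_F(h)$, and $A_{\overline{F}}(h,g)=1$ iff $q(h)=p(g)$. So $(A_FA_{\overline{F}})(f,g)$ is the number of $h\in F^1$ with $s_F(h)=r_F(f)$ and $q(h)=p(g)$. I need this number to be $1$ when $r_E(q(f))=s_E(p(g))$ and $0$ otherwise.
\begin{itemize}
\item If such an $h$ exists, then
\[
r_E(q(f))=q(r_F(f))=q(s_F(h))=s_E(q(h))=s_E(p(g)).
\]
\item Conversely, suppose $r_E(q(f))=s_E(p(g))$. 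Then $q(r_F(f))=s_E(p(g))$. Unique $s$-path lifting of $q$ gives exactly one $h$ with $s_F(h)=r_F(f)$ and $q(h)=p(g)$.
\end{itemize}
Hence the entry is $0$ or $1$ exactly matching $A_{\Delta_T}(f,g)$.

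Finiteness of $F$ and $E$ only guarantees that the matrices are finite. The only real care needed is matching the orientation conventions of the dual graphs $\overline{E}$ and $\overline{F}$, so that the multiplication order $A(E)A(\overline{E})$ and $A_FA_{\overline{F}}$ is the correct one. I expect no genuine obstacle beyond that bookkeeping.
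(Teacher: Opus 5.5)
Your proof is correct and follows essentially the same route as the paper. For $(i)$ the paper uses exactly the bijection $f\mapsto (p(f),r_F(f))$, justified by unique $r$-path lifting of $p$; for $(ii)$ it writes the entry as $|s_F^{-1}(r_F(f))\cap q^{-1}(p(g))|$ and evaluates it via the $LR$ property, and you spell out slightly more than the paper by proving both directions of that $0/1$ evaluation using unique $s$-path lifting of $q$.
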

\begin{proof}
$(i)$ Let $u,v\in E^0$. Set \[u\Delta_Tv:=\{f\in \Delta_T^1~|~s_{\Delta_T}(f)=u,r_{\Delta_T}(f)=v\}\] and \[uE^1F^0v:=\{(e,w)\in E^1\times F^0~|~s_E(e)=u,r_E(e)=p(w),q(w)=v\}.\] Using unique $r$-path lifting of $p$, it is easy to show that the map 
\begin{align*}
    u\Delta_Tv&\longrightarrow uE^1F^0v\\
    f&\longmapsto (p(f),r_F(f))
\end{align*}
is a bijection. Therefore, \[A(\Delta_T)(u,v)=|u\Delta_Tv|=|uE^1F^0v|=\displaystyle{\sum_{x\in E^0}}A(E)(u,x)A(\overline{E})(x,v)=(A(E)A(\overline{E}))(u,v),\] and hence we have the desired relationship.

$(ii)$ Let $f,g\in F^1$. Note that for any $h\in F^1$, $A_F(f,h)A_{\overline{F}}(h,g)\neq 0$ (in which case it is $1$) if and only if $r_F(f)=s_F(h)$ and $q(h)=p(g)$. Therefore, 
\allowdisplaybreaks
{\begin{align*}
A_F A_{\overline{F}}(f,g)&=\displaystyle{\sum_{h\in F^1}}A_F(f,h)A_{\overline{F}}(h,g)\\
&=|s_F^{-1}(r_F(f))\cap q^{-1}(p(g))|\\
&=\left\{
	\begin{array}{ll}
		1  & \mbox{if } q(r_F(f))=s_E(p(g)), \\
		0 & \mbox{otherwise}
	\end{array}
	\right.\\
&=\left\{
	\begin{array}{ll}
		1  & \mbox{if } r_{\Delta_T}(f)=s_{\Delta_T}(g), \\
		0 & \mbox{otherwise}
	\end{array}
	\right.\\
&=A_{\Delta_T}(f,g).
\end{align*}
}
\end{proof} 

\begin{prop}\label{pro characterizing nonemptyness}
Let $T=(F,E,p,q)$ be an $LR$-textile system such that $F,E$ are finite graphs. Then the following are equivalent.

$(i)$ $\mathsf{X}_T^+$ is nonempty.

$(ii)$ $\mathsf{X}_{\Delta_T}^+$ is nonempty.

$(iii)$ $\Delta_T$ contains a cycle.

$(iv)$ There exist paths $\alpha\in E^*$, $\beta\in \overline{E}^*$ such that $|\alpha|=|\beta|$, $s_E(\alpha)=r_{\overline{E}}(\beta)$, $r_E(\alpha)=s_{\overline{E}}(\beta)$. 

$(v)$ There exist $n\in \mathbb{N}\setminus \{0\}$ and words $\nu\in \mathcal{B}_n(\mathsf{X}_F)$, $\omega\in \mathcal{B}_n(\mathsf{X}_{\overline{F}})$ such that $\nu_1=\omega_n$ and $\nu_n=\omega_1$. 

$(vi)$ There exists $u,v\in \Lambda_T^0$ and $n\in \mathbb{N}\setminus \{0\}$ such that $u\Lambda_T^{n\mathbf{e}_1}v,v\Lambda_T^{n\mathbf{e}_2}u\neq \emptyset$. 
\end{prop}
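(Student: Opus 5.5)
We prove $(i)\Leftrightarrow(ii)\Leftrightarrow(iii)$ first, then link $(iii)$ to $(iv)$, $(v)$ and $(vi)$ by translating cycles in $\Delta_T$ into the data of $E$, $\overline{E}$, $F$, $\overline{F}$ and $\Lambda_T$.

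\textbf{Step 1: $(i)\Leftrightarrow(ii)\Leftrightarrow(iii)$.} For $(i)\Leftrightarrow(ii)$ we use the discussion after Lemma \ref{lem LR textile has the diagonal property}. The diagonal of any $x\in\mathsf{X}_T^+$ lies in $\mathsf{X}_{\Delta_T}^+$. Conversely, every $y\in\mathsf{X}_{\Delta_T}^+$ extends to some $x\in\mathsf{X}_T^+$. To build the extension, fill the tiles $x_{(i+1,i)}$ and $x_{(i,i+1)}$ between consecutive diagonal tiles using unique $s$-path lifting of $q$ and unique $r$-path lifting of $p$, as in the proof of Lemma \ref{lem LR textile has the diagonal property}. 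Then repeat outward along the off-diagonals. For $(ii)\Leftrightarrow(iii)$ we use $\mathsf{X}_{\Delta_T}^+=\Delta_T^\infty$. Since $\Delta_T$ is finite, an infinite path exists iff $\Delta_T$ contains a cycle: a cycle can be repeated forever, and by pigeonhole an infinite path must revisit a vertex of $E^0$.

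\textbf{Step 2: $(iii)\Leftrightarrow(iv)$.} Lemma \ref{lem conncetion between matrices}$(i)$ gives $A(\Delta_T)=A(E)A(\overline{E})$. More concretely, the bijection $f\mapsto(p(f),r_F(f))$ from its proof identifies each edge of $\Delta_T$ from $u$ to $v$ with a pair $(e,w)$ consisting of an $E$-edge followed by an $\overline{E}$-edge. A cycle $f_1\cdots f_n$ in $\Delta_T$ therefore yields an alternating closed walk $e_1w_1e_2w_2\cdots e_nw_n$. Using the factorization relations (commuting squares given by $F^1$), I plan to rearrange this walk into $\alpha\beta$ with $\alpha=e_1'\cdots e_n'\in E^*$ and $\beta\in\overline{E}^*$ of length $n$, giving $s_E(\alpha)=r_{\overline{E}}(\beta)$ and $r_E(\alpha)=s_{\overline{E}}(\beta)$.

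The cleanest route is through $\Lambda_T$. A cycle in $\Delta_T$ corresponds to a closed path in $\Lambda_T$ of degree $(n,n)$. Unique factorization splits it as $\lambda(0,n\mathbf{e}_1)\lambda(n\mathbf{e}_1,(n,n))$, which gives both $(iv)$ and $(vi)$, since $\Lambda_T^{n\mathbf{e}_1}$ consists of $E$-paths and $\Lambda_T^{n\mathbf{e}_2}$ of $\overline{E}$-paths. Conversely, given $\alpha,\beta$ as in $(iv)$, the composite $\alpha\beta\in\Lambda_T^{(n,n)}$ is a closed path. Factorizing it in the alternating pattern $(\mathbf{e}_1+\mathbf{e}_2)^n$ gives $n$ squares, each an edge of $F^1$, and these form a closed path in $\Delta_T$. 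Condition $(vi)$ is just $(iv)$ restated in $\Lambda_T$, so $(iv)\Leftrightarrow(vi)$ is immediate.

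\textbf{Step 3: $(v)$.} By Lemma \ref{lem conncetion between matrices}$(ii)$, $A_{\Delta_T}=A_FA_{\overline{F}}$. I intend to read $(v)$ as a statement about the edge graphs: words $\nu\in\mathcal{B}_n(\mathsf{X}_F)$ and $\omega\in\mathcal{B}_n(\mathsf{X}_{\overline{F}})$ sharing endpoints $\nu_1=\omega_n$, $\nu_n=\omega_1$ form a closed walk of tiles alternating between horizontal and vertical moves. Using the $LR$ lifting properties, such a walk can be converted into a rectangle in the textile and hence into a cycle in $\Delta_T$, and conversely. I expect this step to be the main obstacle. The indexing conventions for $\mathsf{X}_F$ versus $\mathsf{X}_{\overline{F}}$ need care, and possibly a nonzero trace argument: $\operatorname{tr}(A_{\Delta_T}^n)\neq0$ for some $n$ iff there is a cycle, with $A_{\Delta_T}^n$ expressed through the product $A_FA_{\overline{F}}$. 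I would first attempt the direct tile-rectangle construction and fall back on this matrix identity if it fails.
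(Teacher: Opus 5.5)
Steps 1 and 2 are sound. Step 1 and $(iv)\Leftrightarrow(vi)$ are as in the paper. For $(iii)\Leftrightarrow(iv)$ the paper argues with matrices: $A(E)$ and $A(\overline{E})$ commute because $T$ is $LR$, so $A(\Delta_T)^n=A(E)^nA(\overline{E})^n$, and traces count closed paths. Your unique-factorization argument in $\Lambda_T$ is the bijective form of the same fact and works just as well. Read $(iv)$ with $|\alpha|=|\beta|\ge 1$; otherwise a single vertex, taken as both $\alpha$ and $\beta$, satisfies it trivially.

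\textbf{The gap is $(v)$.} Step 3 is a plan rather than a proof, and it starts from a misreading of the condition.

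\emph{The shape of the words.} The words in $(v)$ do not form an alternating walk. $\nu$ is a row of $n$ tiles going right and $\omega$ is a column of $n$ tiles going up. The conditions $\nu_n=\omega_1$ and $\omega_n=\nu_1$ close them into an L-shaped loop: all horizontal moves first, then all vertical ones. The identity $A_{\Delta_T}=A_FA_{\overline{F}}$ only gives alternating walks.

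\emph{The missing straightening step.} You need $A_FA_{\overline{F}}=A_{\overline{F}}A_F$, which holds because $T$ is $LR$. The entry $(A_{\overline{F}}A_F)(f,g)$ counts tiles $h$ with $p(h)=q(f)$ and $r_F(h)=s_F(g)$. By unique $r$-path lifting of $p$ this count is $1$ exactly when $q(r_F(f))=s_E(p(g))$, which is the same condition as for $(A_FA_{\overline{F}})(f,g)$. Hence $A_{\Delta_T}^k=A_F^kA_{\overline{F}}^k$. So $A_{\Delta_T}^k(f,f)\ge 1$ yields $g$ with $A_F^k(f,g)\ge1$ and $A_{\overline{F}}^k(g,f)\ge1$; this is the paper's route.

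\emph{No rectangle filling is needed for $(v)\Rightarrow(iv)$.} If $n\ge 2$, then $\alpha=p(\nu_1)\cdots p(\nu_{n-1})$ and $\beta=s_F(\omega_1)\cdots s_F(\omega_{n-1})$ already satisfy $(iv)$.

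\emph{Membership in $\mathcal{B}_n$.} $\mathcal{B}_n(\mathsf{X}_F)$ consists of words of the edge shift, i.e.\ paths that extend to bi-infinite paths, not arbitrary paths in $F$. Otherwise $n=1$ would make $(v)$ true whenever $F^1\neq\emptyset$, contradicting Example \ref{ex cycles in separate graphs is not sufficient}. Your plan does not address this in either direction.
\begin{itemize}
\item For $(iii)\Rightarrow(v)$, take a cycle of length $k$ in $\Delta_T$. The array $x\in\mathsf{X}_T^+$ extending its periodic diagonal satisfies $\sigma_{(k,k)}(x)=x$, by uniqueness of the extension. Extend $x$ periodically to $\mathbb{Z}^2$, so its rows and columns are bi-infinite. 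Then read off $\nu=x_{(0,0)}\cdots x_{(k,0)}$ and $\omega=x_{(k,0)}\cdots x_{(k,k)}$.
\item For $(v)\Rightarrow(ii)$, the degenerate case $n=1$ uses extendability directly. A tile lying on a bi-infinite row and a bi-infinite column gives arbitrarily long $\eb$-paths into its bottom-left corner and arbitrarily long $\ea$-paths out of its top-right corner. By pigeonhole there is a vertex from which paths of every degree $(m,m)$ start. These are paths of every length in $\Delta_T$, so $\Delta_T$ has a cycle.
\end{itemize}
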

\begin{proof}
$(i)\Leftrightarrow (ii)$ This follows from the discussion in the paragraph preceding Lemma \ref{lem conncetion between matrices}. 

$(ii)\Leftrightarrow (iii)$ If $\Delta_T$ contains a cycle $c$, then we have an infinite path $c^\infty=ccc\ldots$ in $\Delta_T$, whence $\mathsf{X}_{\Delta_T}^+\neq \emptyset$. Thus $(iii)\Rightarrow (ii)$. On the other hand, if $x=(x_i)_{i\in \mathbb{N}}\in \mathsf{X}_{\Delta_T}^+$, then since $\Delta_T^1=F^1$ is finite, there should exist $i,j\in \mathbb{N}$ with $i< j$ such that $x_j=x_i$. This leads to a closed path $x_i x_{i+1} x_{i+2}\ldots x_{j-1}$ and subsequently, a cycle in $\Delta_T$. Hence $(ii)\Rightarrow (iii)$. 

$(iii)\Leftrightarrow (iv)$ Since $T$ is an $LR$-textile system, $A(E)$ and $A(\overline{E})$ commute. By using Lemma \ref{lem conncetion between matrices} $(i)$, we have $A(\Delta_T)^n=A(E)^nA(\overline{E})^n$ for any $n\in \mathbb{N}$. This, together with the fact that the number of cycles of length $n$ in $\Delta_T$ is $\tr(A(\Delta_T)^n)$, gives the equivalence $(iii)\Leftrightarrow (iv)$. 

$(ii)\Leftrightarrow (v)$ Note that $\mathsf{X}_{\Delta_T}^+$ can be viewed as the one-sided matrix shift $\mathsf{X}(A_{\Delta_T})^+$, which is nonempty if and only if $A_{\Delta_T}^n(f,f)\ge 1$ for some $n\in \mathbb{N}\setminus \{0\}$. By Lemma \ref{lem conncetion between matrices} $(ii)$, this amounts to say that there is some $g\in F^1$ such that $A_F^n(f,g),A_{\overline{F}}^n(g,f)\ge 1$. Finally, since $\mathsf{X}_F=X(A_F)$ and $\mathsf{X}_{\overline{F}}=X(A_{\overline{F}})$, we have the desired equivalence. 

$(iv)\Leftrightarrow (vi)$ Follows immediately since $E,\overline{E}$ are the coordinate graphs of $\Lambda_T$.  
\end{proof}

\[
\begin{tikzpicture}[scale=1.5]
\node[inner sep=1.5pt, circle,draw,fill=black] (A1) at (0,0) {};
\node[inner sep=1.5pt, circle,draw,fill=black] (A2) at (1,0) {};
\node[inner sep=1.5pt, circle,draw,fill=black] (A3) at (2,0) {};
\node[inner sep=1.5pt, circle,draw,fill=black] (A4) at (3,0) {};

\node[inner sep=1.5pt, circle,draw,fill=black] (B1) at (0,1) {};
\node[inner sep=1.5pt, circle,draw,fill=black] (B2) at (1,1) {};
\node[inner sep=1.5pt, circle,draw,fill=black] (B3) at (2,1) {};
\node[inner sep=1.5pt, circle,draw,fill=black] (B4) at (3,1) {};

\node[inner sep=1.5pt, circle,draw,fill=black] (C1) at (1,2) {};
\node[inner sep=1.5pt, circle,draw,fill=black] (C2) at (2,2) {};
\node[inner sep=1.5pt, circle,draw,fill=black] (C3) at (3,2) {};

\node[inner sep=1.5pt, circle,draw,fill=black] (D1) at (2,3) {};
\node[inner sep=1.5pt, circle,draw,fill=black] (D2) at (3,3) {};

\path[->, red, dashed, >=latex,thick] (A1) edge [] node[]{} (B1);
\path[->, red, dashed, >=latex,thick] (A2) edge [] node[]{} (B2);
\path[->, red, dashed, >=latex,thick] (A3) edge [] node[]{} (B3);
\path[->, red, >=latex,thick] (A4) edge [] node[]{} (B4);
\path[->, red, dashed, >=latex,thick] (B2) edge [] node[]{} (C1);
\path[->, red, dashed, >=latex,thick] (B3) edge [] node[]{} (C2);
\path[->, red, >=latex,thick] (B4) edge [] node[]{} (C3);
\path[->, red, dashed, >=latex,thick] (C2) edge [] node[]{} (D1);
\path[->, red, >=latex,thick] (C3) edge [] node[]{} (D2);

\path[->,blue, >=latex,thick] (A1) edge [] node[]{} (A2);
\path[->,blue, >=latex,thick] (A2) edge [] node[]{} (A3);
\path[->,blue, >=latex,thick] (A3) edge [] node[]{} (A4);
\path[->,blue, dashed, >=latex,thick] (B1) edge [] node[]{} (B2);
\path[->,blue, dashed, >=latex,thick] (B2) edge [] node[]{} (B3);
\path[->,blue, dashed, >=latex,thick] (B3) edge [] node[]{} (B4);
\path[->,blue, dashed, >=latex,thick] (C1) edge [] node[]{} (C2);
\path[->,blue, dashed, >=latex,thick] (C2) edge [] node[]{} (C3);
\path[->,blue, dashed, >=latex,thick] (D1) edge [] node[]{} (D2);

\path[->,black, >=latex,thick] (A1) edge [] node {} (B2);
\path[->,black, >=latex,thick] (B2) edge [] node {} (C2);
\path[->,black, >=latex,thick] (C2) edge [] node {} (D2);

\node at (-0.3,0) {$u$};
\node at (3.3,0) {$v$};
\node at (3.3,3) {$u$};
\node at (0.5,-0.3) {$e_1$};
\node at (1.5,-0.3) {$e_2$};
\node at (2.5,-0.3) {$e_3$};
\node at (3.3,0.5) {$w_1$};
\node at (3.3,1.5) {$w_2$};
\node at (3.3,2.5) {$w_3$};
\node at (0.5,0.7) {$f_1$};
\node at (1.5,1.7) {$f_2$};
\node at (2.5,2.7) {$f_3$};

\end{tikzpicture}
\]
The above figure may help to understand the implications $(iii)\Leftrightarrow (iv)$. Using the paths $\alpha=e_1e_2e_3\in E^*$ and $\beta=w_1w_2w_3\in \overline{E}^*$ and applying the $LR$-property, one eventually obtains the cycle $f_1f_2f_3$ in $\Delta_T$ by successively filling up the squares starting with the bottom right one. Conversely, any cycle $c=f_1f_2f_3$ based at $u$ in $\Delta_T$ uniquely determines a vertex $v\in E^0$ and two paths $\alpha\in E^*$ and $\beta\in \overline{E}^*$ such that $|\alpha|=|\beta|=|c|$, $s_E(\alpha)=u$, $r_E(\alpha)=v$ and $s_{\overline{E}}(\beta)=v$, $r_{\overline{E}}(\beta)=u$. 

We provide some examples which illustrate the application of the above result.
\begin{example}\label{ex the self-dual example}
$(i)$ Let $E$ be the following directed graph:

\[
\begin{tikzpicture}[scale=1.2]
\node[inner sep=1.5pt, circle,draw,fill=black,label={$u$}] (A) at (0,0) {};	
\node[inner sep=1.5pt, circle,draw,fill=black,label={$v$}] (B) at (2,0) {};
	
\path[->,black, >=latex,thick] (A) edge [left] node[above=0.05cm]{} (B);
\path[->, black,thick, every loop/.style={looseness=35}](A)edge[in=225, out=135, loop] node [](d) {} (A);

\node at (-1,0) {$e$};

\node at (1,0.3) {$f$};
\end{tikzpicture}
\] 

and $p:E\longrightarrow E$ the graph homomorphism with $p(u)=p(v)=u$ and $p(e)=p(f)=e$. Then $T=(E,E,p,id_E)$ is an $LR$-textile system. The dual system is given as $\overline{T}=(G,H,s_E,r_E)$ where $G,H$ are shown below.
\[
\begin{tikzpicture}[scale=1.2]
\node[inner sep=1.5pt, circle,draw,fill=black,label={$e$}] (A) at (0,0) {};	
\node[inner sep=1.5pt, circle,draw,fill=black,label={$f$}] (B) at (2,0) {};
	
\path[->,black, >=latex,thick] (A) edge [left] node[above=0.05cm]{} (B);
\path[->, black,thick, every loop/.style={looseness=35}](A)edge[in=225, out=135, loop] node [](d) {} (A);

\node at (-1,0) {$e$};

\node at (1,0.3) {$f$};

\node at (1,-0.5) {$G$};

\node[inner sep=1.5pt, circle,draw,fill=black,label={$u$}] (C) at (4,0) {};	
\node[inner sep=1.5pt, circle,draw,fill=black,label={$v$}] (D) at (6,0) {};
	
\path[->,black, >=latex,thick] (C) edge [left] node[above=0.05cm]{} (D);
\path[->, black,thick, every loop/.style={looseness=35}](C)edge[in=225, out=135, loop] node [](d) {} (C);

\node at (3,0) {$u$};

\node at (5,0.3) {$v$};

\node at (5,-0.5) {$H$};
\end{tikzpicture}
\] 

Note that there is a vertex $u\in E^0$ and paths $e\in E^*$, $u\in H^*$ such that $|e|=|u|=1$ and $s_E(e)=r_H(u)=s_H(u)=r_E(e)=u$. Thus, the condition $(iv)$ of Proposition \ref{pro characterizing nonemptyness} is satisfied. Hence, $\mathsf{X}_T^+$ is nonempty. Here, the diagonal graph $\Delta_T=E$.  

\end{example}
The textile system in the above example is a \emph{specific} example of a self-dual textile system, i.e., a textile system which is isomorphic (see \cite[Definitions 4.3]{KPW}) to its dual. If $\psi$ is the canonical isomorphism between $E$ and $H$, then note that $\psi^0=id_{E^0}$. We remark that for this special type of textile systems, $\mathsf{X}_T^+$ is nonempty if and only if $E$ has a cycle. 

If $\mathsf{X}_T^+$ is nonempty, then both $\mathsf{X}_F$ and $\mathsf{X}_{\overline{F}}$ are nonempty. The following example shows that the converse may not hold. 

\begin{example}\label{ex cycles in separate graphs is not sufficient}
Consider a textile system $T=(F,E,p,q)$ where $F,E$ are the following graphs:
\[
\begin{tikzpicture}[scale=1.2]
\node[inner sep=1.5pt, circle,draw,fill=black,label={$a$}] (A) at (0,0) {};	
\node[inner sep=1.5pt, circle,draw,fill=black,label={$b$}] (B) at (2,0.5) {};
\node[inner sep=1.5pt, circle,draw,fill=black,label={$c$}] (C) at (4,0.5) {};	
\node[inner sep=1.5pt, circle,draw,fill=black,label={$u$}] (D) at (2,-0.5) {};
\node[inner sep=1.5pt, circle,draw,fill=black,label={$v$}] (E) at (4,-0.5) {};
	
\path[->, black,thick, every loop/.style={looseness=35}](A)edge[in=135, out=45, loop] node [](d) {} (A);
\path[->,black, >=latex,thick] (B) edge [left] node[above=0.05cm]{} (C);
\path[->,black, >=latex,thick] (D) edge [left] node[above=0.05cm]{} (E);

\node at (0.5,0.5) {$e$};

\node at (3,0.8) {$f$};

\node at (3,-0.25) {$g$};

\node at (-1,0) {$F:$};
\end{tikzpicture}
\] 

\[
\begin{tikzpicture}[scale=1.2]
\node[inner sep=1.5pt, circle,draw,fill=black,label={$\alpha$}] (A1) at (-2,0) {};
\node[inner sep=1.5pt, circle,draw,fill=black,label={$\beta$}] (A) at (0,0) {};	
\node[inner sep=1.5pt, circle,draw,fill=black,label={$\mu$}] (B) at (2,0.5) {};
\node[inner sep=1.5pt, circle,draw,fill=black,label={$\gamma$}] (C) at (4,0.5) {};	
\node[inner sep=1.5pt, circle,draw,fill=black,label={$\eta$}] (D) at (2,-0.5) {};
\node[inner sep=1.5pt, circle,draw,fill=black,label={$\delta$}] (E) at (4,-0.5) {};
	
\path[->, black,thick, every loop/.style={looseness=35}](A)edge[in=135, out=45, loop] node [](d) {} (A);
\path[->, black,thick, every loop/.style={looseness=35}](A1)edge[in=135, out=45, loop] node [](d) {} (A1);
\path[->,black, >=latex,thick] (B) edge [left] node[above=0.05cm]{} (C);
\path[->,black, >=latex,thick] (D) edge [left] node[above=0.05cm]{} (E);

\node at (0.5,0.5) {$y$};

\node at (-1.5,0.5) {$x$};

\node at (3,0.8) {$z$};

\node at (3,-0.25) {$w$};

\node at (-3,0) {$E:$};
\end{tikzpicture}
\] 
and $p$ is the graph morphism which maps the loop of $F$ to the first loop of $E$, the edges $f,g$ respectively to $z,w$; whereas $q$ sends the loop of $F$ to the second loop of $E$ and the edges $f,g$ to $w,z$ respectively. One can easily see that $T$ is an $LR$-textile system. The graphs of the dual textile system $\overline{T}$ are shown below:

\[
\begin{tikzpicture}[scale=1.2]
\node[inner sep=1.5pt, circle,draw,fill=black,label={$x$}] (A) at (0,0) {};	
\node[inner sep=1.5pt, circle,draw,fill=black,label={$y$}] (B) at (2,0) {};
\node[inner sep=1.5pt, circle,draw,fill=black,label={$z$}] (C) at (4,0) {};	
\node[inner sep=1.5pt, circle,draw,fill=black,label={$w$}] (D) at (6,0) {};

\path[->,black, >=latex,thick] (A) edge [left] node[above=0.05cm]{} (B);
\path[->, >=latex,thick] (C) edge [bend right=40] node[below=0.05cm]{} (D);
\path[->, >=latex,thick] (D) edge [bend right=40] node[below=0.05cm]{} (C);

\node at (1,0.3) {$e$};
\node at (5,0.7) {$g$};
\node at (5,-0.7) {$f$};

\node at (-1,0) {$\overline{F}:$};

\end{tikzpicture}
\]

\[
\begin{tikzpicture}[scale=1.2]
\node[inner sep=1.5pt, circle,draw,fill=black,label={$\alpha$}] (A) at (0,0) {};	
\node[inner sep=1.5pt, circle,draw,fill=black,label={$\beta$}] (B) at (2,0) {};
\node[inner sep=1.5pt, circle,draw,fill=black,label={$\mu$}] (C) at (4,0) {};	
\node[inner sep=1.5pt, circle,draw,fill=black,label={$\eta$}] (D) at (6,0) {};
\node[inner sep=1.5pt, circle,draw,fill=black,label={$\gamma$}] (E) at (7,0) {};	
\node[inner sep=1.5pt, circle,draw,fill=black,label={$\delta$}] (F) at (9,0) {};

\path[->,black, >=latex,thick] (A) edge [left] node[above=0.05cm]{} (B);
\path[->, >=latex,thick] (C) edge [bend right=40] node[below=0.05cm]{} (D);
\path[->, >=latex,thick] (D) edge [bend right=40] node[below=0.05cm]{} (C);
\path[->, >=latex,thick] (E) edge [bend right=40] node[below=0.05cm]{} (F);
\path[->, >=latex,thick] (F) edge [bend right=40] node[below=0.05cm]{} (E);

\node at (1,0.3) {$a$};
\node at (5,0.7) {$u$};
\node at (5,-0.7) {$b$};
\node at (8,0.7) {$v$};
\node at (8,-0.7) {$c$};

\node at (-1,0) {$\overline{E}:$};

\end{tikzpicture}
\]
Note that all the four graphs $F,E,\overline{F}, \overline{E}$ contain cycles. The shift spaces $\mathsf{X}_F$, $\mathsf{X}_{\overline{F}}$ are thus nonempty. But, $\mathsf{X}_T^+=\emptyset$ by Proposition \ref{pro characterizing nonemptyness}, as the diagonal graph $\Delta_T$ (see below) is acyclic. 

\[
\begin{tikzpicture}[scale=1.2]
\node[inner sep=1.5pt, circle,draw,fill=black,label={$\alpha$}] (A) at (0,0) {};	
\node[inner sep=1.5pt, circle,draw,fill=black,label={$\beta$}] (B) at (2,0) {};
\node[inner sep=1.5pt, circle,draw,fill=black,label={$\mu$}] (C) at (4,0.5) {};	
\node[inner sep=1.5pt, circle,draw,fill=black,label={$\gamma$}] (D) at (6,0.5) {};
\node[inner sep=1.5pt, circle,draw,fill=black,label={$\eta$}] (E) at (4,-0.5) {};	
\node[inner sep=1.5pt, circle,draw,fill=black,label={$\delta$}] (F) at (6,-0.5) {};

\path[->,black, >=latex,thick] (A) edge [left] node[above=0.05cm]{} (B);
\path[->, >=latex,thick] (C) edge [left] node[below=0.05cm]{} (F);
\path[->, >=latex,thick] (E) edge [left] node[below=0.05cm]{} (D);

\node at (1,0.3) {$e$};
\node at (5.3,0.4) {$g$};
\node at (4.7,0.4) {$f$};

\node at (-1,0) {$\Delta_T:$};

\end{tikzpicture}
\]
\end{example}

As we have established that the two-dimensional first quadrant shift space of an $LR$-textile system $T$ is uniquely determined by the one-dimensional one-sided edge shift of the diagonal graph $\Delta_T$, it is natural to investigate the connection between $T$ and $\Delta_T$ in the level of algebras. We now establish such a connection. Note that there is a unique $\mathbb{Z}$-grading on $\mathbb{F}_\mathsf{k}(\omega(X))$ such that \[deg(v):=0,~~deg(w)=deg(w^*):=0,~~deg(e)=1,~~deg(e^*)=-1\] for all $v\in E^0$, $w\in F^0$ and $e\in E^1$. Since the ideal $I$ generated by the relations $(T1)-(T5)$ is again homogeneous with respect to this grading, this gives a $\mathbb{Z}$-grading on $\mathbf{A}(T)$. 

\begin{thm}\label{th the connection between A(T) and L(Delta_T)}
Let $T$ be an $LR$-textile system such that $F,E$ are finite graphs without sinks, $p,q$ are surjective and $p$ has $s$-path lifting. There is an injective $\mathbb{Z}$-graded algebra homomorphism \[\Phi:L(\Delta_T)\longrightarrow \mathbf{A}(T)\] which when restricted to the diagonal subalgebra $\mathcal{D}(\Delta_T)$ of $L(\Delta_T)$ gives an isomorphism between $\mathcal{D}(\Delta_T)$ and $\mathfrak{D}(T)$. 
\end{thm}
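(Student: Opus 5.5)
We define $\Phi$ through the universal property of the Leavitt path algebra $L(\Delta_T)$. Recall that $\Delta_T=(E^0,F^1,r_E\circ q,s_E\circ p)$, so an edge $f$ of $\Delta_T$ runs from $s_E(p(f))$ to $r_E(q(f))$. Thinking of $f$ as the diagonal of its Wang tile, we set
\[
\Phi(v):=v \quad (v\in E^0),\qquad \Phi(f):=p(f)\,r_F(f),\qquad \Phi(f^*):=r_F(f)^*\,p(f)^* \quad (f\in F^1).
\]
By $(T5)$ we also have $p(f)r_F(f)=s_F(f)q(f)$. These elements have $\mathbb{Z}$-degrees $0$, $1$ and $-1$ respectively, because $\deg(w)=0$ and $\deg(e)=1$; so once $\Phi$ exists it is automatically $\mathbb{Z}$-graded. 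We must check that they form a Leavitt $\Delta_T$-family.

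The vertex relations and the source/range relations follow from $(T1)$--$(T2)$, since $s_E(p(f))\,p(f)\,r_F(f)\,q(r_F(f))$ collapses correctly and $q(r_F(f))=r_E(q(f))$. For (CK1), compute
\[
\Phi(f)^*\Phi(g)=r_F(f)^*\,p(f)^*p(g)\,r_F(g)=\delta_{p(f),p(g)}\,r_F(f)^*r_F(g)=\delta_{p(f),p(g)}\delta_{r_F(f),r_F(g)}\,q(r_F(f)),
\]
using $(T3)$ twice. Unique $r$-path lifting of $p$ says an edge $f$ is determined by the pair $(p(f),r_F(f))$, so this equals $\delta_{f,g}\,r_{\Delta_T}(f)$. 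For (CK2) at $u\in E^0$, we have
\[
\sum_{s_E(p(f))=u}\Phi(f)\Phi(f)^*=\sum_{e\in s_E^{-1}(u)}e\Big(\sum_{w\in p^{-1}(r_E(e))}ww^*\Big)e^*=\sum_{e\in s_E^{-1}(u)}ee^*=u,
\]
by $(T4)$ applied twice. Here the reindexing is exactly the bijection $f\mapsto(p(f),r_F(f))$ of Lemma \ref{lem conncetion between matrices}(i). The hypotheses make the sums finite and nonempty: finiteness, no sinks, and surjectivity of $p$ on vertices, so that $p^{-1}(v)\neq\emptyset$ for every $v$. The universal property then yields $\Phi$.

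Injectivity follows from the graded uniqueness theorem for Leavitt path algebras: a $\mathbb{Z}$-graded homomorphism out of $L(\Delta_T)$ is injective once it is nonzero on every vertex. Proposition \ref{pro universal property of the algebra} applies under our hypotheses ($LR$, $F$ without sinks, $p^0$ surjective, $p$ with $s$-path lifting) and gives $\Phi(v)=v\neq0$ for all $v\in E^0$.

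For the diagonal, let $\mu=f_1\cdots f_n$ be a path in $\Delta_T$. Then $\Phi(\mu\mu^*)=\alpha\alpha^*$ with $\alpha=p(f_1)r_F(f_1)\cdots p(f_n)r_F(f_n)\in\omega(F^0\cup E^1)$, so $\Phi(\mathcal D(\Delta_T))\subseteq\mathfrak D(T)$. The restriction is injective because $\Phi$ is. The main obstacle is surjectivity onto $\mathfrak D(T)$: we must show that every generator $\alpha\alpha^*$, with $\alpha=e_1\cdots e_mw_1\cdots w_n\neq0$ written in the normal form preceding the definition of $\mathfrak D(T)$, lies in the span of the elements $\Phi(\mu\mu^*)$. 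The first attempt is to reduce to $m=n$. If $m<n$, use $(T4)$ in $E$ to insert $\sum ee^*$ at the end, obtaining
\[
\alpha\alpha^*=\sum_{e}(\alpha e)(\alpha e)^*;
\]
if $m>n$, insert $\sum ww^*$ instead. This is possible because $E$ and $F$ have no sinks and every vertex has nonempty $p$-preimage. Next, use $(T5)$ repeatedly to move the vertical letters $w$ past the horizontal letters $e$; this rewrites $e_1\cdots e_nw_1\cdots w_n$ as an alternating word $e'_1w'_1e'_2w'_2\cdots e'_nw'_n$. The $LR$ property guarantees that the tiles needed to fill the $n\times n$ grid exist and are unique, which is the same filling argument as in the figure after Proposition \ref{pro characterizing nonemptyness}. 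Each consecutive pair $e'_iw'_i$ equals $\Phi(f_i)$ with $f_i$ the unique edge satisfying $p(f_i)=e'_i$ and $r_F(f_i)=w'_i$, so $\alpha\alpha^*=\Phi(\mu\mu^*)$ for $\mu=f_1\cdots f_n$. The delicate point is justifying the existence of each required tile at each swap; this is where the $LR$ hypothesis and the nonvanishing of $\alpha$ are used.
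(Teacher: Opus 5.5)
Your proof is correct and follows the paper's route. You use the same Leavitt $\Delta_T$-family ($p(f)r_F(f)=s_F(f)q(f)$ and its ghost), get injectivity from the graded uniqueness theorem together with Proposition~\ref{pro universal property of the algebra}, and treat the diagonal the same way: balance the numbers of horizontal and vertical letters using $(T4)$, then fill in tiles via the $LR$ property to reach an alternating word. The differences are cosmetic: you verify (CK1) and (CK2) through the presentation $p(f)r_F(f)$ and the bijection $f\mapsto(p(f),r_F(f))$ given by unique $r$-path lifting of $p$, whereas the paper works with $s_F(f)q(f)$ and unique $s$-path lifting of $q$. Also, the step you flag as delicate is immediate: each swap involves two composable letters, so the required tile exists and is unique by the $LR$ property.
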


\begin{proof}
First, we find a suitable Leavitt $\Delta_T$-family inside the algebra $\mathbf{A}(T)$. Define 
\begin{align*}
    a_v:=v,~~b_f:=s_F(f)q(f)=p(f)r_F(f),~~c_f:=q(f)^*s_F(f)^*=r_F(f)^*p(f)^*
\end{align*}
for all $v\in \Delta_T^0=E^0$, $f\in \Delta_T^1=F^1$. From $(T1)$, it follows that the elements of $E^0$ are orthogonal idempotents in $\mathbf{A}(T)$. Hence $a_ua_v=\delta_{u,v}a_u$. Next observe that for each $f\in \Delta_T^1$, \[a_{s_{\Delta_T(f)}}b_f=s_E(p(f))p(f)r_F(f)=p(f)r_F(f)=b_f,\] and \[b_f a_{r_{\Delta_T(f)}}=s_F(f)q(f)r_E(q(f))=s_F(f)q(f)=b_f\] by using $(T2)$. Similarly, we have $a_{r_{\Delta_T(f)}}c_f=c_f=c_f a_{s_{\Delta_T(f)}}$ for each $f\in \Delta_T^1$. Thus, the path algebra relations are verified. We now verify the Cuntz--Krieger relations for the family $\{a_v,b_f,c_f\}$. For $(CK1)$ we need to show that $c_f b_g=\delta_{f,g}a_{r_{\Delta_T(f)}}$ for $f,g\in \Delta_T^1$. We deal in two cases.

\emph{Case-I:} Suppose $f\neq g$. If $s_F(f)\neq s_F(g)$, then $c_f b_g=q(f)^*s_F(f)^*s_F(g)q(g)=0$, by using $(T3)$, and we are done. So assume that $s_F(f)=s_F(g)$. As $f\neq g$, the unique $s$-path lifting of $q$ now forces that $q(f)\neq q(g)$. This together with $(T3)$ yields \[c_f b_g=q(f)^*s_F(f)^*s_F(g)q(g)=q(f)^*q(s_F(f))q(g)=q(f)^*s_E(q(g))q(g)=q(f)^*q(g)=0.\]

\emph{Case-II:} Suppose $f=g$. Then \[c_f b_g=q(f)^*s_F(f)^*s_F(f)q(f)=q(f)^*q(s_F(f))q(f)=q(f)^*s_E(q(f))q(f)=q(f)^*q(f)=r_E(q(f))=a_{r_{\Delta_T(f)}}\] by using $(T3)$.

For $(CK2)$, choose $v\in \Delta_T^0$ such that $s_{\Delta_T}^{-1}(v)\neq \emptyset$. Now 
\allowdisplaybreaks
{\begin{align*}
\displaystyle{\sum_{f\in s_{\Delta_T}^{-1}(v)}} b_f c_f &= \displaystyle{\sum_{f\in s_{\Delta_T}^{-1}(v)}} s_F(f)q(f)q(f)^*s_F(f)^*\\
&= \displaystyle{\sum_{w\in s_F(s_{\Delta_T}^{-1}(v))}}w\left(\displaystyle{\sum_{f\in s_F^{-1}(w)}}q(f)q(f)^*\right)w^*\\
&= \displaystyle{\sum_{w\in s_F(s_{\Delta_T}^{-1}(v))}} w \left(\displaystyle{\sum_{e\in s_E^{-1}(q(w))}} ee^*\right) w^*\\
&= \displaystyle{\sum_{w\in s_F(s_{\Delta_T}^{-1}(v))}} w q(w) w^*\\
&= \displaystyle{\sum_{w\in s_F(s_{\Delta_T}^{-1}(v))}} ww^*\\
&= \displaystyle{\sum_{w\in p^{-1}(v)}}ww^*=v=a_v.
\end{align*}
}
The third equality follows from the unique $s$-path lifting of $q$, whereas the third from the last is due to the $s$-path lifting of $p$. Indeed, if $w\in p^{-1}(v)$ then $p(w)=s_E(p(f))$ for some $f\in s_{\Delta_T}^{-1}(v)$. Then by $s$-path lifting of $p$, there exists $g\in F_1$ such that $p(g)=p(f)$ and $s_F(g)=w$. It follows that $w\in s_F(s_{\Delta_T}^{-1}(v))$. Hence, $(CK2)$ is verified, and we have shown that $\{a_v,b_f,c_f~|~v\in \Delta_T^0,f\in \Delta_T^1\}$ is a Leavitt $\Delta_T$-family in $\mathbf{A}(T)$. By the universal property of $L(\Delta_T)$, there is a unique homomorphism \[\Phi:L(\Delta_T)\longrightarrow \mathbf{A}(T),\] such that $\Phi(v)=a_v$, $\Phi(f)=b_f$ and $\Phi(f^*)=c_f$ for all $v\in \Delta_T^0$ and $f\in \Delta_T^1$. It is easy to see that $\Phi$ preserves the degrees of the generators of $L(\Delta_T)$, and hence it is a $\mathbb{Z}$-graded homomorphism. The hypothesis on the textile system guarantees that we can apply Proposition \ref{pro universal property of the algebra} to conclude that $\Phi(v)\neq 0$ for all $v\in \Delta_T^0$. Therefore, by the graded uniqueness theorem, it follows that $\Phi$ is injective. Now we show that $\Phi$ maps the diagonal subalgebra $\mathcal{D}(\Delta_T)$ onto $\mathfrak{D}(T)$. It is straightforward to notice that $\Phi(\mathcal{D}(\Delta_T))\subseteq \mathfrak{D}(T)$. We just need to show the other inclusion. Let $\beta\in \omega(E^1\cup F^0)$ be a nonzero element in $\mathbf{A}(T)$. Recall that we can express $\beta$ as 
\begin{equation}\label{eq form of beta}
\beta=e_1e_2\cdots e_m w_1 w_2\cdots w_n=w_1' w_2'\cdots w_n' e_1' e_2'\cdots e_m'   
\end{equation}
for some $m,n\in \mathbb{N}$ with $r_E(e_i)=s_E(e_{i+1})$, $r_E(e_m)=p(w_1)$, $q(w_j)=p(w_{j+1})$, $q(w_j')=p(w_{j+1}')$, $q(w_n')=s_E(e_1')$, $r_E(e_i')=s_E(e_{i+1}')$ for all $i$ and $j$. It suffices to show that $\beta\beta^*\in \Phi(\mathcal{D}(\Delta_T))$. The following cases may appear.

\emph{Case-I:} $m=n$. If $m=n=0$ then $\beta\in E^0$ and consequently, $\beta\beta^*=\Phi(\beta)$. So assume $m=n\ge 1$. Since $r_E(e_m)=p(w_1)$ and $p$ has unique $r$-path lifting, there exists a unique $f\in F^1$ with $r_F(f)=w_1$ and $p(f)=e_m$. Then $\beta=e_1e_2\cdots e_{m-1}s_F(f)q(f)w_2\cdots w_m$. Since $r_E(e_{m-1})=p(s_F(f))$ and $r_E(q(f))=p(w_2)$, we can again apply unique $r$-path lifting on the pairs $(e_{m-1},s_F(f))$ and $(q(f),w_2)$. Continuing in this way, we can eventually reduce $\beta$ in the form \[\beta=\epsilon_1 \omega_1 \epsilon_2 \omega_2\cdots \epsilon_m \omega_m,\] where $r_E(\epsilon_i)=p(\omega_i)$ for $i=1,2,\ldots, m$ and $q(\omega_i)=s_E(\epsilon_{i+1})$ for $i=1,2,\ldots,m-1$. Now, for each $i=1,2,\ldots,m$, applying unique $r$-path lifting of $p$, we have a unique $f_i\in F^1$ with $p(f_i)=\epsilon_i$ and $r_F(f_i)=\omega_i$. It follows that $\alpha=f_1f_2\ldots f_m\in \Delta_T^*$ and \[\Phi(\alpha\alpha^*)=\Phi(f_1)\cdots \Phi(f_m)\Phi(f_m^*)\cdots \Phi(f_1^*)=\beta\beta^*.\]

\emph{Case-II:} $m-n\ge 1$. We use induction on $m-n$. If $m-n=1$ then \[\beta=e_1 e_2\cdots e_m w_1 w_2 \cdots w_n q(w_n)=e_1 e_2\cdots e_m w_1 w_2 \cdots w_n \left(\displaystyle{\sum_{w\in p^{-1}(q(w_n))}} ww^*\right)=\displaystyle{\sum_{w\in p^{-1}(q(w_n))}} \beta_w w^*,\] using $(T4)$ where $\beta_w:=e_1 e_2 \cdots e_m w_1 w_2 \cdots w_n w$. Now, \[\beta \beta^*=\displaystyle{\sum_{w\in p^{-1}(q(w_n))}} \beta_w w^* w_n^*\cdots w_1^* e_m^* \cdots e_1^*=\displaystyle{\sum_{w\in p^{-1}(q(w_n))}} \beta_w \beta_w^*\in \Phi(\mathcal{D}(\Delta_T)),\] since each $\beta_w \beta_w^*\in \Phi(\mathcal{D}(\Delta_T))$ by \emph{Case-I}. Hence, we are done for the base case. Now assume that we have the desired conclusion for $m-n=k$ where $k\ge 1$ is a fixed integer. Now choose $\beta=e_1e_2\cdots e_m w_1 w_2\cdots w_n$ such that $m-n=k+1$. Following the same approach as taken in the base case, we can write $\beta=\displaystyle{\sum_{w\in p^{-1}(q(w_n))}} \beta_w w^*$ and this time $\beta_w \beta_w^*\in \Phi(\mathcal{D}(\Delta_T))$ by the induction hypothesis. As a consequence, $\beta\beta^*=\displaystyle{\sum_{w\in p^{-1}(q(w_n))}} \beta_w \beta_w^*\in \Phi(\mathcal{D}(\Delta_T))$. 

\emph{Case-III:} $n-m\ge 1$. This case is similar to \emph{Case-II} by considering the later representation of $\beta$ in $(\ref{eq form of beta})$ and using induction on $n-m$. 

Considering all the above cases, we have $\Phi(\mathcal{D}(\Delta_T))=\mathfrak{D}(T)$. 
\end{proof}

\section{A groupoid model for the textile algebra}\label{sec groupoid of TS}
We associate, with each $LR$-textile system $T$, a certain groupoid which can be realized as the two-dimensional analogue of the graph groupoid of a directed graph as well as the textile system counterpart of the infinite path groupoid of a $2$-graph. Basically, we consider the \emph{Deaconu--Renault} groupoid associated to the canonical action of $\mathbb{N}^2$ on the first quadrant two-dimensional shift space $\mathsf{X}_T^+$. 

Following the notations described in Section \ref{sec preliminaries}, we set some more notations consistent with textile shifts. Let $T=(F,E,p,q)$ be any textile system. For each $v\in F^0$, $e\in E^1$ and $w\in E^0$, define \[\Zz_j(v):=\{x\in \mathsf{X}_T^+~|~s_F(x_{(0,j)})=v\},~~\Zz_i(e):=\{x\in \mathsf{X}_T^+~|~p(x_{(i,0)})=e\}\] and \[\Zz_{(i,j)}(w):=\{x\in \Xs_T^+~|~s_E(p(x_{(i,j)}))=w\}\] for all $i,j\in \mathbb{N}$. Notice that, \[\Zz_j(v)=\displaystyle{\bigsqcup_{\omega\in \mathcal{B}_{(1,j+1)}^v(\Xs_T^+)}} \Zz(\omega),\] where $\mathcal{B}_{(1,j+1)}^v(\Xs_T^+)$ is the set of all $1\times (j+1)$-allowed blocks $\omega$ in $\Xs_T^+$ such that $s_F(\omega_j)=v$. Thus, $\Zz_j(v)$ is open in $\Xs_T^+$. Similar decompositions show that $\Zz_i(e)$, $\Zz_{(i,j)}(w)$ are also open in $\Xs_T^+$ for all $e\in E^1$, $w\in E^0$ and $i,j\in \mathbb{N}$. We denote $\Zz_0(v)$, $\Zz_0(e)$ and $\Zz_{(i,j)}(w)$ simply as $\Zz(v)$, $\Zz(e)$ and $\Zz(w)$ respectively. 

The following proposition sets the stage for the formation of our desired groupoid. Recall that the map $\sigma:\mathbf{n}\longmapsto \sigma_\mathbf{n}$ gives a continuous action of $\mathbb{N}^2$ on $\mathsf{X}_T^+$. 
\begin{prop}\label{pro the LR-property gives a DR system}
Suppose $T=(F,E,p,q)$ is an $LR$-textile system where $F,E$ are finite graphs. Then $(\mathsf{X}_T^+,\sigma)$ forms a rank-$2$ Deaconu--Renault system.     
\end{prop}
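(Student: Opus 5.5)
The plan is to verify the axioms of a rank-$2$ Deaconu--Renault system directly. In the form we use, these are: $X$ is a locally compact Hausdorff space; for each $\N\in\mathbb{N}^2$ there is a local homeomorphism $\sigma_\N$ from an open set $\operatorname{dom}(\sigma_\N)$ to $X$; $\sigma_0=\mathrm{id}$; and $\sigma_\M\circ\sigma_\N=\sigma_{\M+\N}$ on the appropriate domains. In our situation every shift map is globally defined on $\Xs_T^+$, so the domain conditions are trivial.

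First, $\Xs_T^+$ is a closed subset of $(F^1)^{\mathbb{N}^2}$, because its defining conditions involve only two coordinates at a time. Since $F^1$ is finite, $\Xs_T^+$ is compact, Hausdorff and totally disconnected. The maps $(\sigma_\N(x))_\M=x_{\M+\N}$ are continuous for the product topology, send $\Xs_T^+$ into itself, and satisfy $\sigma_0=\mathrm{id}$ and $\sigma_\M\sigma_\N=\sigma_{\M+\N}$. Compositions of local homeomorphisms are local homeomorphisms, and $\sigma_{(m,n)}=\sigma_{\ea}^m\sigma_{\eb}^n$. The whole proof therefore reduces to showing that $\sigma_{\ea}$ and $\sigma_{\eb}$ are local homeomorphisms.

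For $\sigma_{\ea}$, I would use the open cover $\{\Zz(e)\}_{e\in E^1}$, where $\Zz(e)=\{x : p(x_{(0,0)})=e\}$; each $\Zz(e)$ is clopen. I claim that $\sigma_{\ea}$ maps $\Zz(e)$ bijectively onto $\Zz(r_E(e))$. This is the main step, and it uses the unique $r$-path lifting of $p$.

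Given $y$ with $s_E(p(y_{(0,0)}))=r_E(e)$, I rebuild the missing column $0$ inductively.
\begin{itemize}
\item For the bottom entry, note $p(s_F(y_{(0,0)}))=s_E(p(y_{(0,0)}))=r_E(e)$. Unique $r$-path lifting then gives exactly one $x_{(0,0)}$ with $p(x_{(0,0)})=e$ and $r_F(x_{(0,0)})=s_F(y_{(0,0)})$.
\item Having found $x_{(0,j)}$, the next entry $x_{(0,j+1)}$ is the unique edge with $p(x_{(0,j+1)})=q(x_{(0,j)})$ and $r_F(x_{(0,j+1)})=s_F(y_{(0,j+1)})$.
\item This lift is possible because
\[r_E(q(x_{(0,j)}))=q(r_F(x_{(0,j)}))=q(s_F(y_{(0,j)}))=s_E(q(y_{(0,j)}))=s_E(p(y_{(0,j+1)}))=p(s_F(y_{(0,j+1)})).\]
\end{itemize}
By construction the result is an element of $\Xs_T^+$ lying in $\Zz(e)$, and it is the only preimage of $y$ there. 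This gives both surjectivity onto $\Zz(r_E(e))$ and injectivity.

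Conversely, any $x\in\Zz(e)$ satisfies
\[s_E(p(x_{(1,0)}))=p(s_F(x_{(1,0)}))=p(r_F(x_{(0,0)}))=r_E(e),\]
so $\sigma_{\ea}(x)\in\Zz(r_E(e))$. The set $\Zz(r_E(e))$ is open. The restriction of $\sigma_{\ea}$ to $\Zz(e)$ is a continuous bijection from a compact space onto a Hausdorff space, hence a homeomorphism. Alternatively, one sees directly that the first $n$ entries of the rebuilt column depend only on the first $n$ entries of column $0$ of $y$.

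The argument for $\sigma_{\eb}$ is symmetric, with rows in place of columns. I cover $\Xs_T^+$ by the clopen sets $\Zz(v)=\{x : s_F(x_{(0,0)})=v\}$ for $v\in F^0$. Given $y$ with $p(s_F(y_{(0,0)}))=q(v)$, I rebuild row $0$ left to right by the unique $s$-path lifting of $q$. The first entry is the unique edge $f$ with $s_F(f)=v$ and $q(f)=p(y_{(0,0)})$; each later entry is the unique edge with source the previous range and $q$-image $p(y_{(i,0)})$. The compatibility check is the analogous diagram chase, and it shows that $\sigma_{\eb}$ restricts to a homeomorphism from $\Zz(v)$ onto the open set $\{y : p(s_F(y_{(0,0)}))=q(v)\}$.

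The only genuine obstacle is the extension step: checking that the inductively lifted column (respectively row) is consistent with all the textile constraints, so that the extension lies in $\Xs_T^+$. This is exactly where the $LR$ hypothesis, and the homomorphism identities for $p$ and $q$, enter.
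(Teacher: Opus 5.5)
Your proof is correct and follows essentially the same route as the paper: reduce to $\sigma_{\ea}$ and $\sigma_{\eb}$, then rebuild the deleted column (resp.\ row) by iterating the unique $r$-path lifting of $p$ (resp.\ the unique $s$-path lifting of $q$). The differences are minor. You use the coarser clopen sets $\Zz(e)$, $e\in E^1$, and $\Zz(v)$, $v\in F^0$, instead of single-symbol cylinders $\Zz(f)$; you get injectivity directly from uniqueness of the lifts rather than from the diagonal property (Lemma~\ref{lem LR textile has the diagonal property}); and you finish with the compact-to-Hausdorff argument instead of checking openness on cylinder sets, which is slightly cleaner.
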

\begin{proof}
Since the alphabet $F^1$ is finite, $\Xs_T^+$ is a compact Hausdorff space. We show that $\sigma_\mathbf{n}$ is a local homeomorphism for each $\mathbf{n}\in \mathbb{N}^2$. Since composition of local homeomorphisms is a local homeomorphism, it suffices to prove only for $\mathbf{n}=\ea,\eb$. Let $x\in \Xs_T^+$ and $f=x_{(0,0)}$. Then $\Zz(f)$ is an open neighbourhood of $x$. We observe that $\sigma_{\ea}(\Zz(f))=\Zz(r_F(f))$. The inclusion $\sigma_{\ea}(\Zz(f)) \subseteq \Zz(r_F(f))$ is obvious. Let $y\in \Zz(r_F(f))$. Now, \[p(s_F(y_{(0,1)}))=s_E(p(y_{(0,1)}))=s_E(q(y_{(0,0)}))=q(r_F(f))=r_E(q(f)).\] Since, $p$ has unique $r$-path lifting, there is a unique $f_1\in F^1$ such that $p(f_1)=q(f)$ and $r_F(f_1)=s_F(y_{(0,1)})$. Applying the unique $r$-path lifting of $p$, we can inductively construct a sequence $(f_i)_{i\ge 1}$ such that $r_F(f_i)=s_F(y_{(0,i)})$ and $p(f_i)=q(f_{i-1})$ with $f_0=f$. Clearly, this gives an array $\overline{y}\in \Xs_T^+$ with $\overline{y}_\mathbf{n}=y_{\mathbf{n}-\ea}$ for all $\mathbf{n}\ge \ea$ and $\overline{y}_{(0,i)}=f_i$ for all $i\ge 0$. Then, $\overline{y}\in \Zz(f)$ and $\sigma_{\ea}(\overline{y})=y$, whence, the other inclusion follows. This shows that $\sigma_{\ea}(\Zz(f))$ is an open set in $\Xs_T^+$. A similar type of argument, using the $r$-path lifting of $p$ as required, shows that $\sigma_{\ea}(\Zz(\omega))$ is open for all $\omega\in \mathcal{B}(\Xs_T^+)$. For instance, if $\omega\in \mathcal{B}_{(1,n+1)}(\Xs_T^+)$ for some $n\ge 0$, then \[\sigma_{\ea}(\Zz(\omega))=\displaystyle{\bigcap_{i=0}^{n}}~\Zz_i(r_F(\omega_i))\] and since each set on the right-hand side is open, $\sigma_{\ea}(\Zz(\omega))$ is open too. Therefore, $\sigma_{\ea}$ is an open map. Finally, we show that $\sigma_{\ea}|_{\Zz(f)}$ is injective. Suppose there exist $x,y\in \Zz(f)$ such that $\sigma_{\ea}(x)=\sigma_{\ea}(y)$. Then, $x_{(0,0)}=f=y_{(0,0)}$ and $x_{(i,j)}=y_{(i,j)}$ for all $i\ge 1$, $j\ge 0$. An inductive application of the diagonal property, which $\Xs_T^+$ possess by Lemma \ref{lem LR textile has the diagonal property}, now shows that $x_{(0,j)}=y_{(0,j)}$ for all $j\ge 1$ and consequently, $x=y$. Therefore, $\sigma_{\ea}|_{\Zz(f)}:\Zz(f)\longrightarrow \Zz(r_F(f))$ is a continuous, open bijection and hence a homeomorphism. Following the exact same argument using the unique $s$-path lifting of $q$, we can show that \[\sigma_{\eb}|_{\Zz(f)}:\Zz(f)\longrightarrow \Zz(q(f))\] is a homeomorphism. Hence, $\sigma_\mathbf{n}$ is a local homeomorphism for all $\mathbf{n}\in \mathbb{N}^2$ and subsequently, $(X,\sigma)$ is a rank-$2$ Deaconu--Renault system. 
\end{proof}
\begin{dfn}\label{def the textile groupoid}
Let $T$ be an $LR$-textile system. The \emph{textile groupoid} $\mathcal{G}_T$ of $T$ is defined to be the Deaconu--Renault groupoid associated with $(\Xs_T^+,\sigma)$.    
\end{dfn}
In view of \cite[Lemma 3.1]{Sims}, $\mathcal{G}_T$ is a locally compact Hausdorff \'{e}tale groupoid with a basis consists of sets of the form \[\Zz(U,\mathbf{m},\mathbf{n},V):=\{(x,\mathbf{m}-\mathbf{n},y)~|~x\in U,y\in V~\text{and}~\sigma_\M (x)=\sigma_\N (y)\},\] where $U,V$ are open sets in $\Xs_T^+$ and $\M,\N\in \mathbb{N}^2$ are such that $\sigma_\M|_U$, $\sigma_\N|_V$ are homeomorphisms onto a common open set in $\Xs_T^+$. It is naturally $\mathbb{Z}^2$-graded via the continuous cocycle $c:\mathcal{G}_T\longrightarrow \mathbb{Z}^2$; $c((x,\M-\N,y)):=\M-\N$. As the unit space $\Xs_T^+$ is locally compact, Hausdorff and zero-dimensional, it is totally disconnected. Therefore, $\mathcal{G}_T$ is an ample groupoid. We can, of course, exhibit a basis consisting of compact open bisections. Indeed, if $\M=(m_1,m_2),\N=(n_1,n_2)\in \mathbb{N}^2$, $\alpha\in \mathcal{B}_{(m_1+1,m_2+1)}(\Xs_T^+)$, $\beta\in \mathcal{B}_{(n_1+1,n_2+1)}(\Xs_T^+)$ and $\sigma_\N(\Zz(\alpha))=\sigma_\M(\Zz(\beta))$, then the open set \[\Zz(\alpha,\beta):=\Zz(\Zz(\alpha),\M,\N,\Zz(\beta))\] is compact in $\mathcal{G}_T$ as $\Zz(\alpha)$, $\Zz(\beta)$ are compact in $\Xs_T^+$ (see the proof of \cite[Lemma 3.1]{Sims}). Also, it is a bisection which is not difficult to establish using the diagonal property of $\Xs_T^+$. The sets of the above form are the members of the desired basis for the ample topology of $\mathcal{G}_T$. 

Given a $2$-graph $\Lambda$ with finite edge set, the shift spaces $\Lambda^\infty$ and $\Xs_{T_\Lambda}^+$ are shown to be conjugate in \cite[Lemma 4.11]{BGGLPP}. We now use the homeomorphism, giving the conjugacy to show that the textile groupoid is isomorphic to the infinite path groupoid of the associated $2$-graph.
\begin{prop}\label{pro textile groupoid is isomorphic to path groupoid}
Let $T$ be an $LR$-textile system such that $F,E$ are finite graphs, $E$ is sink-free and $p$ is surjective. Let $\Lambda_T$ be the associated $2$-graph. Then there is a cocycle preserving isomorphism between the groupoids $\mathcal{G}_T$ and $\mathcal{G}_{\Lambda_T}$.
\end{prop}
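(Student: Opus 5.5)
The plan is to transport the groupoid structure along the conjugacy of \cite[Lemma 4.11]{BGGLPP}. Since $E$ is sink-free and $p$ is surjective (and $F,E$ finite), that lemma provides a homeomorphism $h:\Lambda_T^\infty\longrightarrow \Xs_T^+$ satisfying $h\circ\sigma^{\Lambda}_\N=\sigma_\N\circ h$ for all $\N\in\mathbb{N}^2$. Here $\sigma^\Lambda_\N(x)(\M,\M')=x(\M+\N,\M'+\N)$ is the shift on infinite paths. Concretely, $h(x)_{(i,j)}$ is the edge of $F$ determined by the unit square $x((i,j),(i,j)+\mathbf{1})\in\Lambda_T^{\ea+\eb}=F^1$; this identification comes from $F_{\Lambda_T}$ and the equality $T_{\Lambda_T}=T$. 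I will recall this formula explicitly, because I need its shift-equivariance in both coordinate directions.

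With $h$ available, define
\[
\Psi:\mathcal{G}_{\Lambda_T}\longrightarrow \mathcal{G}_T,\qquad \Psi(x,\M-\N,y):=(h(x),\M-\N,h(y)).
\]
The infinite path groupoid is $\mathcal{G}_{\Lambda_T}=\{(x,\M-\N,y): \sigma^\Lambda_\M(x)=\sigma^\Lambda_\N(y)\}$, which is exactly the Deaconu--Renault groupoid of $(\Lambda_T^\infty,\sigma^\Lambda)$. By equivariance, $\sigma^\Lambda_\M(x)=\sigma^\Lambda_\N(y)$ holds if and only if $\sigma_\M(h(x))=\sigma_\N(h(y))$. So $\Psi$ is well defined and bijective, and its inverse is given by the same formula with $h^{-1}$. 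Composition and inversion act only on the outer coordinates and add the middle integers, so $\Psi$ is a groupoid homomorphism. It also clearly preserves the cocycle $c$.

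The remaining step, and the only substantive one, is that $\Psi$ is a homeomorphism. I plan to check that it sends the basic open sets $\Zz(U,\M,\N,V)$ of $\mathcal{G}_{\Lambda_T}$ to $\Zz(h(U),\M,\N,h(V))$. Each $h(U)$ and $h(V)$ is open because $h$ is a homeomorphism. Equivariance shows that $\sigma_\M|_{h(U)}=h\circ\sigma^\Lambda_\M|_U\circ h^{-1}$ is a homeomorphism onto $h(\sigma^\Lambda_\M(U))$, and likewise for $V$. Hence the image is again a basic open set of $\mathcal{G}_T$ in the description from \cite[Lemma 3.1]{Sims}. Running the same argument with $h^{-1}$ gives continuity of $\Psi^{-1}$.

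The main obstacle I expect is confirming that \cite[Lemma 4.11]{BGGLPP} applies under exactly these hypotheses. The worry is that the lemma might need $\Lambda_T$ to be row-finite without sources, or that its conjugacy might be stated only for the $\ea$ and $\eb$ shifts separately. If the hypotheses differ, I would reprove directly that $h$ is well defined and bijective. For surjectivity, starting from an array in $\Xs_T^+$, the tiles glue along the $\mathcal{R}$-relations into a degree-preserving functor $\Omega_2\to\Lambda_T$. For injectivity, unique factorization determines an infinite path from its unit squares. Sink-freeness of $E$ together with surjectivity of $p$ is what ensures that every vertex receives such paths.
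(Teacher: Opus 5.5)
Your proposal is correct and follows essentially the same route as the paper: you transport the Deaconu--Renault structure along the shift-commuting homeomorphism of \cite[Lemma 4.11]{BGGLPP} (your $h$ is the paper's $\psi$, with the groupoid map $(x,\M-\N,y)\mapsto(h(x),\M-\N,h(y))$ merely written in the opposite direction), and you check that the basic open sets $\Zz(U,\M,\N,V)$ correspond, which the paper does by citing \cite[Lemma 2.4]{Rout}. Your fallback of proving directly that $h$ is bijective is sound, but it is not needed.
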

\begin{proof}
The conjugacy established in \cite[Lemma 4.11]{BGGLPP}, with $\Lambda$ replaced by $\Lambda_T$ gives a homeomorphism on the level of unit spaces of the respective groupoids. However, for the readers' convenience, we here reproduce the map and then extend this to the whole groupoid. Consider the map $\phi:\Xs_T^+\longrightarrow \Lambda_T^\infty$ defined by 
\[\phi(x)((\M,\N)):=
	\left\{
	\begin{array}{llll}
		x_{[\M,\N-(1,1)]}  & \mbox{if } \N-\M\ge (1,1), \\
		p(x_\M) & \mbox{if }  \N=\M+\ea,\\
        s_F(x_\M) & \mbox{if }  \N=\M+\eb,\\
         s_E(p(x_\M)) & \mbox{if } \N=\M,\\
	\end{array}
	\right.
\] for all $x\in \Xs_T^+$ and $(\M,\N)\in \Omega_2$. On the other hand, consider $\psi:\Lambda_T^\infty\longrightarrow \Xs_T^+$ defined by \[\psi(x)_\M:=x(\M,\M+(1,1)),\] for all $x\in \Lambda_T^\infty$ and $\M\in \mathbb{N}^2$. Then it is easy to observe that $\phi$ and $\psi$ are mutually inverse maps taking cylinder sets to cylinder sets. Thus, $\phi$ is a homeomorphism between $\mathcal{G}_T^{(0)}$ and $\mathcal{G}_{\Lambda_T}^{(0)}$. Now, define the map 
\begin{align*}
    \Phi:\mathcal{G}_{T}&\longrightarrow \mathcal{G}_{\Lambda_T}\\
    (x,\M-\N,y)&\longmapsto (\phi(x),\M-\N,\phi(y))
\end{align*} 
for all $(x,\M-\N,y)\in \mathcal{G}_T$. Since, $\phi$ is shift commuting, it follows that $\Phi$ is well-defined. Using the inverse $\psi$, we can show that $\Phi$ is a bijection. To show that it is an open map, take any basic open set $\Zz(U,\M,\N,V)$ in $\mathcal{G}_T$. Since $\phi$ is a homeomorphism, $\phi(U),\phi(V)$ are open sets in $\Lambda_T^\infty$. Now, $\sigma_{\Lambda_T}^\M|_{\phi(U)}=\phi\circ \sigma_\M|_U\circ \phi^{-1}$ and $\sigma_{\Lambda_T}^\N|_{\phi(V)}=\phi\circ \sigma_\N|_V\circ \phi^{-1}$, from which it follows that $\sigma_{\Lambda_T}^\M|_{\phi(U)}$ and $\sigma_{\Lambda_T}^\N|_{\phi(V)}$ are injective. Again since, $\sigma_\M(U)=\sigma_\N(V)$ and $\phi$ is shift commuting, so $\sigma_{\Lambda_T}^\M(\phi(U))=\sigma_{\Lambda_T}^\N(\phi(V))$. Now, \cite[Lemma 2.4]{Rout} implies that $\Phi(\Zz(U,\M,\N,V))=Z(\phi(U),\M,\N,\phi(V))$ is an open set in $\mathcal{G}_{\Lambda_T}$. Reversing the arguments, it can be shown that $\Phi$ is continuous and hence, a homeomorphism. By definition, $\Phi$ preserves the composition and commutes with the respective cocycles. Hence, the result follows.
\end{proof}
As an immediate consequence of the above proposition, we obtain the following result.
\begin{cor}\label{cor textile algebra as Steinberg algebra}
Let $T$ be an $LR$-textile system such that $F,E$ are finite graphs, $E$ is sink-free and $p$ is surjective. Let $\mathsf{k}$ be any field. Then the textile $\mathsf{k}$-algebra $\mathbf{A}(T)$ is isomorphic to the Steinberg algebra $A_\mathsf{k}(\mathcal{G}_T)$ as $\mathbb{Z}^2$-graded algebras.
\end{cor}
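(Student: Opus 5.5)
The plan is to chain three isomorphisms, each respecting the $\mathbb{Z}^2$-grading:
\[
\mathbf{A}(T)\cong \KP(\Lambda_T)\cong A_\mathsf{k}(\mathcal{G}_{\Lambda_T})\cong A_\mathsf{k}(\mathcal{G}_T).
\]

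The first isomorphism is Remark \ref{rem connecting with other algebras}$(i)$. That remark needs $E$ and $\overline{E}$ to be row-finite without sinks, so the first step is to check these hypotheses. Since $F$ and $E$ are finite, $E$ and $\overline{E}=(E^0,F^0,q^0,p^0)$ are finite, hence row-finite. $E$ is sink-free by assumption. A sink of $\overline{E}$ would be a vertex $v\in E^0$ with $p^{-1}(v)=\emptyset$, and surjectivity of $p$ (on vertices) rules this out. So the maps $\pi$ and $\eta$ of that remark are mutually inverse. They send generators of degree $\ea,\eb$ to generators of the same degree, so they are $\mathbb{Z}^2$-graded.

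The second isomorphism is the known Steinberg-algebra realisation of Kumjian--Pask algebras for row-finite $k$-graphs without sources. This is \cite{Pino} together with the groupoid results of Clark--Flynn--an Huef. Under it, the Kumjian--Pask generators $t_\lambda$ correspond to the characteristic functions $1_{Z(\lambda,s(\lambda))}$, and the grading from $d$ corresponds to the grading by the canonical cocycle. One must confirm that $\Lambda_T$ has no sources: its coordinate graphs are $E$ and $\overline{E}$, and both are sink-free in the paper's orientation. I would also note that with finite $F^1$, $\Lambda_T$ is finite, so every vertex receives finitely many edges of each degree.

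The third isomorphism comes from Proposition \ref{pro textile groupoid is isomorphic to path groupoid}. Its hypotheses are exactly those given here, and it supplies a cocycle-preserving isomorphism of ample groupoids $\Phi:\mathcal{G}_T\to\mathcal{G}_{\Lambda_T}$. Composition $f\mapsto f\circ\Phi$ then gives an algebra isomorphism $A_\mathsf{k}(\mathcal{G}_{\Lambda_T})\to A_\mathsf{k}(\mathcal{G}_T)$. This works because $\Phi$ is a homeomorphism, so it preserves compact open bisections and hence locally constant compactly supported functions, and because convolution is defined purely in terms of the groupoid structure. Since $\Phi$ intertwines the cocycles $c$, the map carries functions supported on $c^{-1}(\mathbf{n})$ to functions supported on $c^{-1}(\mathbf{n})$, so it is $\mathbb{Z}^2$-graded. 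Composing the three graded isomorphisms gives the statement.

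The main obstacle is matching conventions in the middle step. The paper's $2$-graph orientation ($\ea$ for $E^1$, $\eb$ for $F^0$), its range/source conventions, and its definition of $\Lambda^\infty$ must agree with those used in the cited Kumjian--Pask/Steinberg isomorphism, especially regarding "no sources" versus "no sinks". If they do not agree, I would fall back on a direct argument. Define a textile $T$-family in $A_\mathsf{k}(\mathcal{G}_T)$ by
\[
P_v=1_{\Zz(v)},\quad S_w=1_{\Zz(\Zz_0(w),\eb,0,\cdot)},\quad T_e=1_{\Zz(\Zz(e),\ea,0,\cdot)},
\]
with their adjoints given by the inverse bisections. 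Verify $(T1)$--$(T5)$ using the local homeomorphisms from Proposition \ref{pro the LR-property gives a DR system}, and then invoke the graded uniqueness theorem. Injectivity follows because $P_v\neq0$ (Proposition \ref{pro universal property of the algebra}), and surjectivity because the $\Zz(\alpha,\beta)$ span $A_\mathsf{k}(\mathcal{G}_T)$.
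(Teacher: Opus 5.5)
Your proposal is correct and follows the paper's proof exactly. The paper also chains the $\mathbb{Z}^2$-graded isomorphisms $\mathbf{A}(T)\cong \KP_\mathsf{k}(\Lambda_T)\cong A_\mathsf{k}(\mathcal{G}_{\Lambda_T})\cong A_\mathsf{k}(\mathcal{G}_T)$, using Remark \ref{rem connecting with other algebras}$(i)$, the Kumjian--Pask/Steinberg isomorphism for row-finite source-free $2$-graphs, and Proposition \ref{pro textile groupoid is isomorphic to path groupoid}. Your hypothesis checks (that $\overline{E}$ is sink-free by surjectivity of $p^0$, and that this orientation makes $\Lambda_T$ source-free) fill in details the paper leaves implicit, and your fallback direct argument is not needed.
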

\begin{proof}
We have the following chain of $\mathbb{Z}^2$-graded $\mathsf{k}$-algebra isomorphisms \[\mathbf{A}(T)\cong \KP_\mathsf{k}(\Lambda_T)\cong A_\mathsf{k}(\mathcal{G}_{\Lambda_T})\cong A_\mathsf{k}(\mathcal{G}_T),\] where the first isomorphism is discussed in Remark \ref{rem connecting with other algebras} $(i)$, the second one is due to \cite[Proposition 5.4]{Clark}, and the last one follows from Proposition \ref{pro textile groupoid is isomorphic to path groupoid}. 
\end{proof}
\section{Three-dimensional textile systems: a blueprint}\label{sec 3D textiles}
In this section, we propose to give a model of a textile system in three dimensions. The motivation for this construction comes from the relationship between $2$-graphs and $LR$-textile systems. We carefully observe this connection and then extend this naturally to fit nicely in three dimensions. 

Let $T=(F,E,p,q)$ be an $LR$-textile system. The taxonomy in the following diagram helps to recall the roles of the textile components in the corresponding $2$-graph. 
\[
\begin{tikzpicture}[scale=1.2]
\node[] (A) at (0,1.5) {$F^1$};
\node[] (B) at (0,0) {$F^0$};	
\node[] (C) at (2,0) {$E^1$};
\node[] (D) at (2,-1.5) {$E^0$};

\node[] (E) at (6,-1.5) {\text{Level 0: Vertices}};
\node[] (F) at (6,0) {Level 1: Edges};
\node[] (G) at (6,1.5) {Level 2: Bicolored paths/Commutative squares};

\draw[transform canvas={xshift=0.5ex}, ->] (A) -- (B);
\draw[transform canvas={xshift=-0.5ex}, ->] (A) -- (B);

\draw[transform canvas={xshift=0.5ex}, ->] (C) -- (D);
\draw[transform canvas={xshift=-0.5ex}, ->] (C) -- (D);

\draw[dashed, thick] (B) -- (C);

\path[->, dashed, >=latex,thick] (A) edge [left] node {} (G);
\path[->, dashed, >=latex,thick] (C) edge [left] node {} (F);
\path[->, dashed, >=latex,thick] (D) edge [left] node {} (E);

\node at (-0.3,0.75) {$r_F$};
\node at (0.3,0.75) {$s_F$};

\node at (1.7,-0.75) {$r_E$};
\node at (2.3,-0.75) {$s_E$};
\end{tikzpicture}
\] 
Evidently, we need to add one more level of tricolored paths if we wish to have the above type of taxonomy in three dimensions. Also, there are three types of bicolored paths in the skeleton of a $3$-graph. These naturally suggest that we should accommodate four directed graphs $G,F_1,F_2,E$ in a plausible three-dimensional textile system, so that the components can be divided into four levels as described in the following diagram:

\[
\begin{tikzpicture}[scale=1.2]
\node[] (A) at (0,1.5) {$F_2^1$};
\node[] (B) at (0,0) {$F_2^0$};	
\node[] (C) at (2,0) {$E^1$};
\node[] (D) at (2,-1.5) {$E^0$};
\node[] (E) at (-4,3) {$G^1$};
\node[] (F) at (-4,1.5) {$G^0$};	
\node[] (G) at (-2,1.5) {$F_1^1$};
\node[] (H) at (-2,0) {$F_1^0$};

\node[] (X) at (6,-1.5) {Level 0: Vertices};
\node[] (Y) at (6,0) {Level 1: Edges};
\node[] (Z) at (6,1.5) {Level 2: Bicolored paths/Commutative squares};
\node[] (W) at (6,3) {Level 3: Tricolored paths};

\draw[transform canvas={xshift=0.5ex}, ->] (A) -- (B);
\draw[transform canvas={xshift=-0.5ex}, ->] (A) -- (B);

\draw[transform canvas={xshift=0.5ex}, ->] (C) -- (D);
\draw[transform canvas={xshift=-0.5ex}, ->] (C) -- (D);

\draw[transform canvas={xshift=0.5ex}, ->] (E) -- (F);
\draw[transform canvas={xshift=-0.5ex}, ->] (E) -- (F);

\draw[transform canvas={xshift=0.5ex}, ->] (G) -- (H);
\draw[transform canvas={xshift=-0.5ex}, ->] (G) -- (H);

\draw[dashed, thick] (H) -- (B);
\draw[dashed, thick] (B) -- (C);
\draw[dashed, thick] (F) -- (G);
\draw[dashed, thick] (G) -- (A);

\path[->, dashed, >=latex,thick] (E) edge [left] node {} (W);
\path[->, dashed, >=latex,thick] (A) edge [left] node {} (Z);
\path[->, dashed, >=latex,thick] (C) edge [left] node {} (Y);
\path[->, dashed, >=latex,thick] (D) edge [left] node {} (X);

\node at (-4.3,2.25) {$r_G$};
\node at (-3.7,2.25) {$s_G$};

\node at (-2.3,0.75) {$r_{F_1}$};
\node at (-1.7,0.75) {$s_{F_1}$};

\node at (-0.3,0.75) {$r_{F_2}$};
\node at (0.3,0.75) {$s_{F_2}$};

\node at (1.7,-0.75) {$r_E$};
\node at (2.3,-0.75) {$s_E$};
\end{tikzpicture}
\] 
We now present a formal definition.
\begin{dfn}\label{def 3D textile}
A three-dimensional textile system $\mathcal{T}$ is given by the following diagram:
\[
\begin{tikzpicture}[scale=1.2]
\node[inner sep=2.5pt, circle] (B) at (-1.5,0) {$F_1$};
\node[inner sep=2.5pt, circle] (C) at (1.5,0) {$F_2$};	
\node[inner sep=2.5pt, circle] (D) at (0,-1.5) {$E$};
\node[inner sep=2.5pt, circle] (A) at (0,1.5) {$G$}; 

\draw[transform canvas={xshift=0.6ex}, ->] (A) -- (B);
\draw[transform canvas={xshift=-0.6ex}, ->] (A) -- (B);

\draw[transform canvas={xshift=0.6ex}, ->] (A) -- (C);
\draw[transform canvas={xshift=-0.6ex}, ->] (A) -- (C);

\draw[transform canvas={xshift=0.6ex}, ->] (C) -- (D);
\draw[transform canvas={xshift=-0.6ex}, ->] (C) -- (D);

\draw[transform canvas={xshift=0.6ex}, ->] (B) -- (D);
\draw[transform canvas={xshift=-0.6ex}, ->] (B) -- (D);

\node at (-0.55,0.55) {$q_1$};
\node at (-0.95,0.95) {$p_1$};

\node at (0.55,0.55) {$q_2$};
\node at (0.95,0.95) {$p_2$};

\node at (-0.55,-0.55) {$h_1$};
\node at (-0.95,-0.95) {$g_1$};

\node at (0.55,-0.55) {$h_2$};
\node at (0.95,-0.95) {$g_2$};
\end{tikzpicture}
\]
where $G,F_1,F_2,E$ are directed graphs and the arrows represent graph homomorphisms such that

$(\mathcal{C})$ the following commutativity hold:
\begin{align*}
g_1\circ p_1&=g_2\circ p_2~\text{(Outer-commutativity)};\\
h_1\circ q_1&=h_2\circ q_2~\text{(Inner-commutativity)};\\
g_1\circ q_1&=h_2\circ p_2~\text{(Cross-commutativity 1)};\\
h_1\circ p_1&=g_2\circ q_2~\text{(Cross-commutativity 2)};
\end{align*}
and

$(\mathcal{I})$ the following maps are injective:

\begin{align*}
    s_G\times r_G\times p_1^1\times q_1^1\times p_2^1\times q_2^1: G^1 &\longrightarrow G^0\times G^0\times F_1^1\times F_1^1\times F_2^1\times F_2^1\\
    p_1^0\times q_1^0\times p_2^0\times q_2^0: G^0 &\longrightarrow F_1^0\times F_1^0\times F_2^0\times F_2^0\\
    s_{F_1}\times r_{F_1}\times g_1^1\times h_1^1: F_1^1 &\longrightarrow F_1^0\times F_1^0\times E^1\times E^1\\
    s_{F_2}\times r_{F_2}\times g_2^1\times h_2^1: F_2^1 &\longrightarrow F_2^0\times F_2^0\times E^1\times E^1.
\end{align*}
\end{dfn}

The injectivity of the first map in $(\mathcal{I})$ says that we can represent any edge $x\in G^1$ uniquely by a cube of the form

\[
\begin{tikzpicture}[scale=2.5]
\node[inner sep=1pt, circle,draw,fill=black] (A1) at (0,0,0) {};
\node[inner sep=1pt, circle,draw,fill=black] (A2) at (1,0,0) {};
\node[inner sep=1pt, circle,draw,fill=black] (A3) at (1,0,1) {};
\node[inner sep=1pt, circle,draw,fill=black] (A4) at (0,0,1) {};

\node[inner sep=1pt, circle,draw,fill=black] (B1) at (0,1,0) {};
\node[inner sep=1pt, circle,draw,fill=black] (B2) at (1,1,0) {};
\node[inner sep=1pt, circle,draw,fill=black] (B3) at (1,1,1) {};
\node[inner sep=1pt, circle,draw,fill=black] (B4) at (0,1,1) {};

\path[->, black, >=latex,thick] (B4) edge [] node[]{} (B1);
\path[->, black, dashed, >=latex,thick] (A4) edge [] node[]{} (A1);
\path[->, black, >=latex,thick] (B3) edge [] node[]{} (B2);
\path[->, black, >=latex,thick] (A3) edge [] node[]{} (A2);

\path[->,black, dashed, >=latex,thick] (A1) edge [] node[]{} (B1);
\path[->,black, >=latex,thick] (A4) edge [] node[]{} (B4);
\path[->,black, >=latex,thick] (A2) edge [] node[]{} (B2);
\path[->,black, >=latex,thick] (A3) edge [] node[]{} (B3);

\path[->, black, >=latex,thick] (B1) edge [] node {} (B2);
\path[->, black, >=latex,thick] (B4) edge [] node {} (B3);
\path[->,black, dashed, >=latex,thick] (A1) edge [] node {} (A2);
\path[->, black, >=latex,thick] (A4) edge [] node {} (A3);

\node[] (L) at (-1,0.8,1.5) {$\mathbf{L}=s_G(x)$};
\node[] (R) at (2,0.5,0) {$\mathbf{R}=r_G(x)$};

\node[] (D) at (0.8,-1,1.5) {$\mathbf{D}=p_1^1(x)$};
\node[] (U) at (0.5,2,0) {$\mathbf{U}=q_1^1(x)$};

\node[] (F) at (-0.5,-0.5,1.5) {$\mathbf{F}=p_2^1(x)$};
\node[] (B) at (1.5,1.5,0) {$\mathbf{B}=q_2^1(x)$};

\node[] (A) at (0.4,0.8,1.5) {};
\node[] (K) at (0.75,0.5,0) {};

\node[] (C) at (0.8,0.4,1.5) {};
\node[] (G) at (0.5,0.75,0) {};

\node[] (E) at (0.6,0.6,1) {};
\node[] (H) at (0.4,0.4,0) {};

\node at (0.5,0.5,0.5) {$C_x$};

\path[->,blue, dotted, >=latex,thick] (A) edge [] node[]{} (L);
\path[->,blue, dotted, >=latex,thick] (K) edge [] node[]{} (R);
\path[->,blue, dotted, >=latex,thick] (C) edge [] node[]{} (D);
\path[->,blue, dotted, >=latex,thick] (G) edge [] node[]{} (U);
\path[->,blue, dotted, >=latex,thick] (E) edge [] node[]{} (F);
\path[->,blue, dotted, >=latex,thick] (H) edge [] node[]{} (B);
\end{tikzpicture}
\]
whereas $u\in G^0$, $f\in F_1^1$, $f'\in F_2^1$ are uniquely represented by the tiles 

\[
\begin{tikzpicture}[scale=2.0]
\node[inner sep=0.5pt, circle,draw,fill=black] (A1) at (0,0) {};
\node[inner sep=0.5pt, circle,draw,fill=black] (A2) at (1,0) {};
\node[inner sep=0.5pt, circle,draw,fill=black] (A3) at (0,1) {};
\node[inner sep=0.5pt, circle,draw,fill=black] (A4) at (1,1) {};

\path[->,blue, >=latex,thick] (A1) edge [] node[]{} (A2);
\path[->,blue, >=latex,thick] (A1) edge [] node[]{} (A3);
\path[->,blue, >=latex,thick] (A2) edge [] node[]{} (A4);
\path[->,blue, >=latex,thick] (A3) edge [] node[]{} (A4);

\node at (0.5,-0.2) {$p_1^0(u)$};
\node at (0.5,1.2) {$q_1^0(u)$};
\node at (-0.3,0.5) {$p_2^0(u)$};
\node at (1.3,0.5) {$q_2^0(u)$};

\node at (0.5,0.5) {$T_u$};

\node[inner sep=0.5pt, circle,draw,fill=black] (B1) at (3,0) {};
\node[inner sep=0.5pt, circle,draw,fill=black] (B2) at (4,0) {};
\node[inner sep=0.5pt, circle,draw,fill=black] (B3) at (3,1) {};
\node[inner sep=0.5pt, circle,draw,fill=black] (B4) at (4,1) {};

\path[->,blue, >=latex,thick] (B1) edge [] node[]{} (B2);
\path[->,blue, >=latex,thick] (B1) edge [] node[]{} (B3);
\path[->,blue, >=latex,thick] (B2) edge [] node[]{} (B4);
\path[->,blue, >=latex,thick] (B3) edge [] node[]{} (B4);

\node at (3.5,-0.2) {$s_{F_1}(f)$};
\node at (3.5,1.2) {$r_{F_1}(f)$};
\node at (2.7,0.5) {$g_1^1(f)$};
\node at (4.3,0.5) {$h_1^1(f)$};

\node at (3.5,0.5) {$T_f$};

\node[inner sep=0.5pt, circle,draw,fill=black] (C1) at (6,0) {};
\node[inner sep=0.5pt, circle,draw,fill=black] (C2) at (7,0) {};
\node[inner sep=0.5pt, circle,draw,fill=black] (C3) at (6,1) {};
\node[inner sep=0.5pt, circle,draw,fill=black] (C4) at (7,1) {};

\path[->,blue, >=latex,thick] (C1) edge [] node[]{} (C2);
\path[->,blue, >=latex,thick] (C1) edge [] node[]{} (C3);
\path[->,blue, >=latex,thick] (C2) edge [] node[]{} (C4);
\path[->,blue, >=latex,thick] (C3) edge [] node[]{} (C4);

\node at (6.5,-0.2) {$g_2^1(f')$};
\node at (6.5,1.2) {$h_2^1(f')$};
\node at (5.6,0.5) {$s_{F_2}(f')$};
\node at (7.4,0.5) {$r_{F_2}(f')$};

\node at (6.5,0.5) {$T_{f'}$};

\end{tikzpicture}
\]
in view of the injectivity of the remaining maps in $(\mathcal{I})$. 

\begin{rmks}\label{rem vieweing 2D textile as a 3D textile}
$(i)$ One can identify three two-dimensional textile systems inside a three-dimensional textile $\mathcal{T}$. The injectivity of the last two maps in $(\mathcal{I})$ immediately tells that $T_1:=(F_1,E,g_1,h_1)$ and $T_2:=(F_2,E,g_2,h_2)$ are textile systems. The third one, although not apparent, is not hard to find. Consider the directed graphs $F:=(F_1^0,G^0,q_1^0,p_1^0)$ and $H:=(E^0,F_2^0,h_2^0,g_2^0)$. By $(\mathcal{C})$, the maps $p:=(g_1^0,p_2^0)$ and $q:=(h_1^0,q_2^0)$ are then graph homomorphisms from $F$ to $H$. Note that $T_3=(F,H,p,q)$ is a textile system since $p_1^0\times q_1^0\times p_2^0\times q_2^0$ is injective. We call $T_1,T_2,T_3$, the $2D$-projections of $\mathcal{T}$.

$(ii)$ Any directed graph $E$ can be seen as a two-dimensional textile system (see Remark \ref{rem LPA as A(T)}). In a similar manner, one can realize a two-dimensional textile system $T=(F,E,p,q)$ as a three-dimensional textile: consider the diagram 

\[
\begin{tikzpicture}[scale=1]
\node[inner sep=2.5pt, circle] (B) at (-1.5,0) {$E^1$};
\node[inner sep=2.5pt, circle] (C) at (1.5,0) {$F^0$};	
\node[inner sep=2.5pt, circle] (D) at (0,-1.5) {$E^0$};
\node[inner sep=2.5pt, circle] (A) at (0,1.5) {$F^1$}; 

\draw[transform canvas={xshift=0.6ex}, ->] (A) -- (B);
\draw[transform canvas={xshift=-0.6ex}, ->] (A) -- (B);

\draw[transform canvas={xshift=0.6ex}, ->] (A) -- (C);
\draw[transform canvas={xshift=-0.6ex}, ->] (A) -- (C);

\draw[transform canvas={xshift=0.6ex}, ->] (C) -- (D);
\draw[transform canvas={xshift=-0.6ex}, ->] (C) -- (D);

\draw[transform canvas={xshift=0.6ex}, ->] (B) -- (D);
\draw[transform canvas={xshift=-0.6ex}, ->] (B) -- (D);

\node at (-0.55,0.55) {$q^1$};
\node at (-0.95,0.95) {$p^1$};

\node at (0.55,0.55) {$r_F$};
\node at (0.95,0.95) {$s_F$};

\node at (-0.55,-0.55) {$r_E$};
\node at (-0.95,-0.95) {$s_E$};

\node at (0.55,-0.55) {$q^0$};
\node at (0.95,-0.95) {$p^0$};
\end{tikzpicture}
\]
where $F^1,E^1,F^0,E^0$ are viewed as $0$-graphs. The four types of commutativity in $(\mathcal{C})$ of Definition \ref{def 3D textile} follow from the fact that $p,q$ are graph homomorphisms. Note that the maps with domains $(F^1)^1$, $(E^1)^1$ and, $(F^0)^1$ are vacuously injective whereas the injectivity of the map with domain $(F^1)^0=F^1$ is precisely the injectivity of $s_F\times r_F\times p^1\times q^1$, which is guaranteed as $T$ is a two-dimensional textile system. This realization also justifies the definition of a three-dimensional textile system.
\end{rmks}

Similar to two-dimensional textile systems, any three-dimensional textile $\mathcal{T}$ gives rise to a three-dimensional shift space 

\[\mathsf{X}_{\mathcal{T}}:=\{x=(x_\mathbf{n})_{\mathbf{n}\in \mathbb{Z}^3}~|~r_G(x_\mathbf{n})=s_G(x_{\mathbf{n}+\mathbf{e}_3}), q_1^1(x_\mathbf{n})=p_1^1(x_{\mathbf{n}+\mathbf{e_1}}), q_2^1(x_\mathbf{n})=p_2^1(x_{\mathbf{n}+\mathbf{e}_2})\}.\] When $G^1$ is finite, $\Xs_\mathcal{T}$ is a shift of finite type over the alphabet $\mathcal{A}=G^1$. Indeed, it is the matrix shift $\mathsf{X}(A_1,A_2,A_3)$ where $A_1,A_2,A_3$ are transition matrices indexed by $G^1$ and defined as follows:
\[A_1(x,y):=
	\left\{
	\begin{array}{ll}
		1  & \mbox{if } q_1^1(x)=p_1^1(y) \\
        0 & \mbox{otherwise, }
	\end{array}
	\right.~
A_2(x,y):=
	\left\{
	\begin{array}{ll}
		1  & \mbox{if } q_2^1(x)=p_2^1(y) \\
        0 & \mbox{otherwise, }
	\end{array}
	\right.~
A_3(x,y):=
	\left\{
	\begin{array}{ll}
		1  & \mbox{if } r_G(x)=s_G(y) \\
        0 & \mbox{otherwise. }
	\end{array}
	\right.
\]
Also, it is not hard to realize any three-dimensional shift of finite type (up to a higher-block presentation), as the shift space associated with a three-dimensional textile. The following can be seen as analogues of \cite[Theorem 2.3.2]{Lind-Marcus} and \cite[Proposition 2.3]{JM} in three-dimensions. 
\begin{prop}\label{pro realizing 3D SFT as 3-textile SFT}
Let $\mathsf{X}$ be any three-dimensional shift of finite type over the alphabet $\mathcal{A}$. Then there exists a three-dimensional textile system $\mathcal{T}$ such that $\mathsf{X}_\mathcal{T}=\mathsf{X}^{[(2,2,2)]}$. 
\end{prop}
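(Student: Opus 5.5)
The plan is to build $\mathcal{T}$ directly from the allowed blocks of $\Xs$, with each level of the textile recording blocks of the appropriate shape. First I will fix the setting. By the definition of a three-dimensional SFT adopted in \S\ref{ssec symbolic dynamics}, $\Xs$ is conjugate to a matrix shift. An exact equality $\Xs_\mathcal{T}=\Xs^{[(2,2,2)]}$ is only meaningful for the concrete shift, so I will assume $\Xs=\Xs(A_1,A_2,A_3)$ is a matrix shift over $\mathcal{A}$; in general one replaces $\Xs$ by such a conjugate copy. For a block $\omega$ I write $\omega|_{i=c}$ for its slice at $i$-th coordinate $c$.

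The construction is as follows. Put
\[G^1=\mathcal{B}_{(2,2,2)}(\Xs),\quad G^0=\mathcal{B}_{(2,2,1)}(\Xs),\quad F_1^1=\mathcal{B}_{(1,2,2)}(\Xs),\quad F_1^0=\mathcal{B}_{(1,2,1)}(\Xs),\]
\[F_2^1=\mathcal{B}_{(2,1,2)}(\Xs),\quad F_2^0=\mathcal{B}_{(2,1,1)}(\Xs),\quad E^1=\mathcal{B}_{(1,1,2)}(\Xs),\quad E^0=\mathcal{A}.\]
In every graph, the source and range of an edge are its slices at third coordinate $1$ and $2$. The homomorphisms are defined by slicing. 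The maps $p_1,q_1$ take the slice at first coordinate $1$ and $2$, and $p_2,q_2$ take the slice at second coordinate $1$ and $2$. On $F_1$ the maps $g_1,h_1$ take the slice at second coordinate $1$ and $2$. On $F_2$ the maps $g_2,h_2$ take the slice at first coordinate $1$ and $2$. All of these act on both vertices and edges.

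Slicing along one coordinate commutes with slicing along the third coordinate, so each map is a graph homomorphism. The four identities in $(\mathcal{C})$ hold because both sides extract the same vertical column. Outer-commutativity gives the column at $(1,1)$, inner-commutativity the column at $(2,2)$, cross-commutativity 1 the column at $(2,1)$, and cross-commutativity 2 the column at $(1,2)$. Each map in $(\mathcal{I})$ is injective because a block is determined by two opposite faces. For example, a $(2,2,2)$-block is already determined by $p_1^1(x)$ and $q_1^1(x)$.

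For the inclusion $\Xs^{[(2,2,2)]}\subseteq \Xs_\mathcal{T}$, take $\beta_{(2,2,2)}(y)$. Its neighbouring entries are overlapping windows of $y$, so the relevant faces agree. For the reverse inclusion, take $x\in\Xs_\mathcal{T}$ and define $y_\N:=(x_\N)_{(1,1,1)}$. The key step is to show that $x_\N=y_{[\N,\N+\mathbf{1}]}$, i.e.\ $(x_\N)_{\mathbf{1}+\mathbf{c}}=(x_{\N+\mathbf{c}})_{\mathbf{1}}$ for every $\mathbf{c}\in\{0,1\}^3$. The defining conditions of $\Xs_\mathcal{T}$ identify a face of $x_\N$ with the opposite face of $x_{\N+\mathbf{e}_i}$, which handles $\mathbf{c}=\mathbf{e}_i$. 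For a general $\mathbf{c}$ I walk along a monotone lattice path from $\N$ to $\N+\mathbf{c}$, one coordinate step at a time, and use face identifications at each step. The symbol in question lies in every face crossed, because after moving in direction $i$ its $i$-th coordinate in the new block is $1$.

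Once $x_\N=y_{[\N,\N+\mathbf{1}]}$ is established, each adjacent pair $y_\N,y_{\N+\mathbf{e}_i}$ occurs inside the allowed block $x_\N\in\mathcal{B}_{(2,2,2)}(\Xs)$. Hence $A_i(y_\N,y_{\N+\mathbf{e}_i})=1$, so $y\in\Xs$ and $x=\beta_{(2,2,2)}(y)$.

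I expect the main obstacle to be this overlap-consistency bookkeeping, namely the diagonal and corner entries such as $\mathbf{c}=\mathbf{e}_1+\mathbf{e}_2$ or $\mathbf{c}=\mathbf{1}$. The risk is getting the face indices in a step wrong; the remaining verifications are routine.
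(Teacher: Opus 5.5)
Your construction is the paper's own: the same block shapes $\mathcal{B}_{(2,2,2)},\mathcal{B}_{(2,2,1)},\mathcal{B}_{(1,2,2)},\ldots$ for $G^1,G^0,F_1^1,\ldots$, with every homomorphism given by slicing off a face. Your lattice-path argument for the reverse inclusion correctly fills in what the paper dismisses as a routine verification, and your restriction to a matrix shift is needed for the equality to be true, not merely for it to be meaningful: for an SFT that is only conjugate to a matrix shift (e.g.\ forbidding four consecutive $1$'s in direction $\mathbf{e}_1$), $\mathsf{X}^{[(2,2,2)]}$ is not a nearest-neighbour shift and so cannot equal any $\mathsf{X}_{\mathcal{T}}$, which means replacing $\mathsf{X}$ by a conjugate matrix shift (yielding $\mathsf{X}_{\mathcal{T}}\cong\mathsf{X}$) is the right reading and is exactly what the paper's proof tacitly assumes.
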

\begin{proof}
For $m,n,p\ge 1$, suppose $\mathcal{B}_{(m,n,p)}(\mathsf{X})$ denotes the set of all allowed $m\times n\times p$-blocks in $\mathsf{X}$. Construct directed graphs $G,F_1,F_2,E$ as follows: \[G^0:=\mathcal{B}_{(2,2,1)}(\mathsf{X}),~G^1:=\mathcal{B}_{(2,2,2)}(\mathsf{X}),~r_G(\gamma):=\mathbf{U}(\gamma),~s_G(\gamma):=\mathbf{D}(\gamma),\] \[F_1^0:=\mathcal{B}_{(1,2,1)}(\mathsf{X}),~F_1^1:=\mathcal{B}_{(1,2,2)}(\mathsf{X}),~r_{F_1}\left(\begin{matrix}
c & d\\
a & b
\end{matrix}\right):=\begin{matrix}
c & d
\end{matrix},~s_{F_1}\left(\begin{matrix}
c & d\\
a & b
\end{matrix}\right):=\begin{matrix}
a & b
\end{matrix},\] 
\[F_2^0:=\mathcal{B}_{(2,1,1)}(\mathsf{X}),~F_2^1:=\mathcal{B}_{(2,1,2)}(\mathsf{X}),~r_{F_2}\left(\begin{matrix}
c & d\\
a & b
\end{matrix}\right):=\begin{matrix}
c & d
\end{matrix},~s_{F_2}\left(\begin{matrix}
c & d\\
a & b
\end{matrix}\right):=\begin{matrix}
a & b
\end{matrix},\] 
\[E^0:=\mathcal{A},~E^1:=\mathcal{B}_{(1,1,2)}(\mathsf{X}),~r_E\left(\begin{matrix}
b\\
a
\end{matrix}\right):=b,~s_E\left(\begin{matrix}
b\\
a
\end{matrix}\right):=a.\] Now define graph morphisms $p_1,q_1:G\longrightarrow F_1$, $p_2,q_2:G\longrightarrow F_2$, $g_1,h_1:F_1\longrightarrow E$, $g_2,h_2:F_2\longrightarrow E$ as follows:
\[p_1:=\left(\begin{matrix}
c & d\\
a & b
\end{matrix}\longmapsto \begin{matrix}
c\\
a
\end{matrix},~\mathbf{B}(-)\right),~~ q_1:=\left(\begin{matrix}
c & d\\
a & b
\end{matrix}\longmapsto \begin{matrix}
d\\
b
\end{matrix},~\mathbf{F}(-)\right),\] 
\[p_2:=\left(\begin{matrix}
c & d\\
a & b
\end{matrix}\longmapsto \begin{matrix}
a & b
\end{matrix},~\mathbf{L}(-)\right),~~ q_2:=\left(\begin{matrix}
c & d\\
a & b
\end{matrix}\longmapsto \begin{matrix}
c & d
\end{matrix},~\mathbf{R}(-)\right),\]
\[g_1:=\left(\begin{matrix}
a & b
\end{matrix}\longmapsto \begin{matrix}
a
\end{matrix},~\begin{matrix}
c & d\\
a & b
\end{matrix}\longmapsto \begin{matrix}
c\\
a
\end{matrix}\right),~~ h_1:=\left(\begin{matrix}
a & b
\end{matrix}\longmapsto \begin{matrix}
b
\end{matrix},~\begin{matrix}
c & d\\
a & b
\end{matrix}\longmapsto \begin{matrix}
d\\
b
\end{matrix}\right),\]
\[g_2:=\left(\begin{matrix}
a & b
\end{matrix}\longmapsto \begin{matrix}
a
\end{matrix},~\begin{matrix}
c & d\\
a & b
\end{matrix}\longmapsto \begin{matrix}
c\\
a
\end{matrix}\right),~~ h_2:=\left(\begin{matrix}
a & b
\end{matrix}\longmapsto \begin{matrix}
b
\end{matrix},~\begin{matrix}
c & d\\
a & b
\end{matrix}\longmapsto \begin{matrix}
d\\
b
\end{matrix}\right).\] A routine verification yields that the diagram 
\[
\begin{tikzpicture}[scale=1.2]
\node[inner sep=2.5pt, circle] (B) at (-1.5,0) {$F_1$};
\node[inner sep=2.5pt, circle] (C) at (1.5,0) {$F_2.$};	
\node[inner sep=2.5pt, circle] (D) at (0,-1.5) {$E$};
\node[inner sep=2.5pt, circle] (A) at (0,1.5) {$G$}; 

\draw[transform canvas={xshift=0.6ex}, ->] (A) -- (B);
\draw[transform canvas={xshift=-0.6ex}, ->] (A) -- (B);

\draw[transform canvas={xshift=0.6ex}, ->] (A) -- (C);
\draw[transform canvas={xshift=-0.6ex}, ->] (A) -- (C);

\draw[transform canvas={xshift=0.6ex}, ->] (C) -- (D);
\draw[transform canvas={xshift=-0.6ex}, ->] (C) -- (D);

\draw[transform canvas={xshift=0.6ex}, ->] (B) -- (D);
\draw[transform canvas={xshift=-0.6ex}, ->] (B) -- (D);

\node at (-0.55,0.55) {$q_1$};
\node at (-0.95,0.95) {$p_1$};

\node at (0.55,0.55) {$q_2$};
\node at (0.95,0.95) {$p_2$};

\node at (-0.55,-0.55) {$h_1$};
\node at (-0.95,-0.95) {$g_1$};

\node at (0.55,-0.55) {$h_2$};
\node at (0.95,-0.95) {$g_2$};
\end{tikzpicture}
\] defines a three-dimensional textile system $\mathcal{T}$ and $\mathsf{X}_\mathcal{T}=\mathsf{X}^{[(2,2,2)]}$. 
\end{proof}

Now we present a categorical interpretation of three-dimensional textile systems. This helps us to realize the construction of textile systems from a unifying point of view and may be helpful for further generalization. Following the notation in \cite{Maclane}, we denote by $\downdownarrows$, the category with only two objects, namely, $0$ and $1$ and two parallel non-identity arrows directed from $1$ to $0$ as shown below: 

\[
\begin{tikzpicture}[scale=1]
\node[] (A) at (0,0) {$1$};
\node[] (B) at (2,0) {$0$.};

\draw[transform canvas={yshift=0.5ex}, ->] (A) -- (B);
\draw[transform canvas={yshift=-0.5ex}, ->] (A) -- (B);

\end{tikzpicture}
\] 

It is well known that the category of directed graphs \textbf{Dgr} is nothing but the functor category \textbf{Set}$^\downdownarrows$, i.e., any directed graph can be viewed as a functor from $\downdownarrows$ to \textbf{Set}, and any graph homomorphism between two directed graphs is a natural transformation between the corresponding functors. A similar type of description (for which we do not find any reference in the literature) can be given for two-dimensional textile systems. It is straightforward to observe that any textile system $T=(F,E,p,q)$ can be realized as a functor \[T:\downdownarrows\longrightarrow \textbf{Dgr},\] where $F=T(1)$, $E=T(0)$ and $p,q$ are images of the parallel arrows. Also, a morphism between two textiles $T_1=(F_1,E_1,p_1,q_1)$ and $T_2=(F_2,E_2,p_2,q_2)$ which is defined in \cite{KPW} as a pair of graph morphisms $\phi=(\phi_F,\phi_E)$, $\phi_F:F_1\longrightarrow F_2$, $\phi_E: E_1\longrightarrow E_2$ satisfying \[p_2\circ \phi_F=\phi_E \circ p_1,~\text{and}~q_2 \circ \phi_F=\phi_E \circ q_1,\] is precisely a natural transformation between the functors $T_1$ and $T_2$. It follows that the category of all two-dimensional textile systems, which we denote by \textbf{2-TSys}, is a full subcategory of the functor category $\textbf{Dgr}^\downdownarrows$. 

The next two results make us realize that our description of a three-dimensional textile system is in line with the above sort of categorical representations of directed graphs and two-dimensional textiles. We use the following notation: if $G$ is a directed graph, then $\Delta(G)$ denotes the subgraph of $G\times G$ with $\Delta(G)^0:=\{(u,u)~|~u\in G^0\}$ and $\Delta(G)^1:=\{(e,e)~|~e\in G^1\}$. 
\begin{prop}\label{pro categorical realization of 3D textiles}
Suppose $\mathcal{T}$ is a three-dimensional textile presented by the diagram 

\[
\begin{tikzpicture}[scale=1]
\node[inner sep=2.5pt, circle] (B) at (-1.5,0) {$F_1$};
\node[inner sep=2.5pt, circle] (C) at (1.5,0) {$F_2.$};	
\node[inner sep=2.5pt, circle] (D) at (0,-1.5) {$E$};
\node[inner sep=2.5pt, circle] (A) at (0,1.5) {$G$}; 

\draw[transform canvas={xshift=0.6ex}, ->] (A) -- (B);
\draw[transform canvas={xshift=-0.6ex}, ->] (A) -- (B);

\draw[transform canvas={xshift=0.6ex}, ->] (A) -- (C);
\draw[transform canvas={xshift=-0.6ex}, ->] (A) -- (C);

\draw[transform canvas={xshift=0.6ex}, ->] (C) -- (D);
\draw[transform canvas={xshift=-0.6ex}, ->] (C) -- (D);

\draw[transform canvas={xshift=0.6ex}, ->] (B) -- (D);
\draw[transform canvas={xshift=-0.6ex}, ->] (B) -- (D);

\node at (-0.55,0.55) {$q_1$};
\node at (-0.95,0.95) {$p_1$};

\node at (0.55,0.55) {$q_2$};
\node at (0.95,0.95) {$p_2$};

\node at (-0.55,-0.55) {$h_1$};
\node at (-0.95,-0.95) {$g_1$};

\node at (0.55,-0.55) {$h_2$};
\node at (0.95,-0.95) {$g_2$};
\end{tikzpicture}
\]
Then $T_1:=(\Delta(G),F_1\times F_2, p_1\times p_2,q_1\times q_2)$, $T_0:=(F_2\times F_1, E\times E, g_2\times g_1, h_2\times h_1)$ are two-dimensional textile systems, and $\phi:=(p_2\times q_1,g_1\times h_2)$, $\psi:=(q_2\times p_1, h_1\times g_2)$ are textile morphisms from $T_1$ to $T_0$. Consequently, $\mathcal{T}$ can be realized as a functor from $\downdownarrows$ to \textup{\textbf{2-TSys}}. 
\end{prop}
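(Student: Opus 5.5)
The proof is a direct verification in three parts: the two quadruples are textile systems, $\phi$ and $\psi$ are textile morphisms, and the resulting data assemble into a functor $\downdownarrows\to\textbf{2-TSys}$.

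\emph{$T_1$ and $T_0$ are textile systems.} We first make the graph structures explicit. $\Delta(G)$ is a copy of $G$ sitting diagonally in $G\times G$. The map $p_1\times p_2$ sends $(u,u)\mapsto (p_1^0(u),p_2^0(u))$ and $(x,x)\mapsto (p_1^1(x),p_2^1(x))$. Products of graph homomorphisms are graph homomorphisms, and restrictions to subgraphs remain so. Hence $p_1\times p_2$ and $q_1\times q_2$ are homomorphisms $\Delta(G)\to F_1\times F_2$; similarly $g_2\times g_1,h_2\times h_1: F_2\times F_1\to E\times E$. For $T_1$ we need injectivity of
\[(x,x)\longmapsto \big((s_G(x),s_G(x)),(r_G(x),r_G(x)),(p_1^1(x),p_2^1(x)),(q_1^1(x),q_2^1(x))\big).\]
This is exactly the injectivity of the first map in $(\mathcal{I})$. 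For $T_0$, an edge $(f',f)\in F_2^1\times F_1^1$ is sent to its source, its range, $(g_2^1(f'),g_1^1(f))$ and $(h_2^1(f'),h_1^1(f))$. Its first coordinates recover $(s_{F_2}(f'),r_{F_2}(f'),g_2^1(f'),h_2^1(f'))$, and its second coordinates recover the analogous quadruple for $f$. So injectivity follows from the last two maps in $(\mathcal{I})$ applied componentwise.

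\emph{$\phi$ and $\psi$ are textile morphisms.} First, $\phi_F=p_2\times q_1$ must be read as $(u,u)\mapsto (p_2(u),q_1(u))$ on $\Delta(G)$, landing in $F_2\times F_1$; likewise $\phi_E=g_1\times h_2: F_1\times F_2\to E\times E$. A small point to sort out is the ordering of factors. The target of $\phi_E$ must match the ordering in $T_0$, so I will interpret $\phi_E$ as the graph map $F_1\times F_2\to E\times E$ and check the morphism equations coordinatewise. The required identities are
\[(g_2\times g_1)\circ(p_2\times q_1)=(g_1\times h_2)\circ(p_1\times p_2),\qquad (h_2\times h_1)\circ(p_2\times q_1)=(g_1\times h_2)\circ(q_1\times q_2)\]
on $\Delta(G)$. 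Evaluated on a diagonal element, these reduce to four scalar identities:
\begin{itemize}
\item $g_2p_2=g_1p_1$ (outer-commutativity);
\item $g_1q_1=h_2p_2$ (cross-commutativity 1);
\item $h_2p_2=g_1q_1$ (cross-commutativity 1 again);
\item $h_1q_1=h_2q_2$ (inner-commutativity).
\end{itemize}
For $\psi=(q_2\times p_1,h_1\times g_2)$ the same computation produces $g_2q_2=h_1p_1$ and $g_1p_1=g_2p_2$, together with $h_2q_2=h_1q_1$ and $h_1p_1=g_2q_2$. These are cross-commutativity 2, outer-commutativity and inner-commutativity. The diagonal is exactly what lets each of the four commutativity conditions in $(\mathcal{C})$ be used, since both coordinates come from the same cube.

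\emph{Functoriality.} A functor $\downdownarrows\to\textbf{2-TSys}$ is just a choice of two objects and two parallel morphisms, as $\downdownarrows$ has no nontrivial composites. So $1\mapsto T_1$, $0\mapsto T_0$ with the arrows sent to $\phi,\psi$ gives the functor.

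The main obstacle is purely bookkeeping: matching the factor orders ($F_1\times F_2$ versus $F_2\times F_1$) so that the morphism equations are well-typed and correspond exactly to the four conditions of $(\mathcal{C})$. I expect to handle it by writing each identity coordinatewise as above.
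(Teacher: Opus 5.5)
Your proposal is correct and follows essentially the same route as the paper. Injectivity for $T_1$ comes from the first map in $(\mathcal{I})$, and injectivity for $T_0$ comes componentwise from the last two maps. The two commuting squares for $\phi$ (resp.\ $\psi$) reduce coordinatewise to outer-, inner- and cross-commutativity 1 (resp.\ 2), with the same typing $\phi_F:\Delta(G)\to F_2\times F_1$ and $\phi_E:F_1\times F_2\to E\times E$ that the paper's diagrams use.
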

\begin{proof}
Let $x,y\in \Delta(G)^1$. Then, $x=(e,e)$ and $y=(f,f)$ for some $e,f\in G^1$. Now \[\left(s_{\Delta(G)}(x),r_{\Delta(G)}(x),(p_1\times p_2)(x),(q_1\times q_2)(x)\right)=\left(s_{\Delta(G)}(y),r_{\Delta(G)}(y),(p_1\times p_2)(y),(q_1\times q_2)(y)\right)\] implies that \[(s_{G_1}(e),r_{G_1}(e),p_1^1(e),q_1^1(e),p_2^1(e),q_2^1(e))=(s_{G_1}(f),r_{G_1}(f),p_1^1(f),q_1^1(f),p_2^1(f),q_2^1(f)),\] whence $e=f$ by the injectivity of the map $s_G\times r_G\times p_1^1\times q_1^1\times p_2^1\times q_2^1$. Hence, $x=y$ and it follows that $T_1$ is a two-dimensional textile system. Now we show that $T_0$ is a textile system. Suppose $(x,y),(x',y')\in (F_2\times F_1)^1$ such that $\left(s_{F_2\times F_1}(x,y),r_{F_2\times F_1}(x,y),(g_2\times g_1)(x,y), (h_2\times h_1)(x,y)\right)=(s_{F_2\times F_1}(x',y'),r_{F_2\times F_1}(x',y'),$ $(g_2\times g_1)(x',y'), (h_2\times h_1)(x',y'))$. This implies \[(s_{F_2}(x),r_{F_2}(x),g_2(x),h_2(x))=(s_{F_2}(x'),r_{F_2}(x'),g_2(x'),h_2(x'))\] and \[(s_{F_1}(y),r_{F_1}(y),g_1(y),h_1(y))=(s_{F_1}(y'),r_{F_1}(y'),g_1(y'),h_1(y')).\] Now the injectivity of $s_{F_1}\times r_{F_1}\times g_1^1\times h_1^1$ and $s_{F_2}\times r_{F_2}\times g_2^1\times h_2^1$ gives $y=y'$ and $x=x'$, from which we have $(x,y)=(x',y')$. Thus, $T_0$ is a textile system. To show that $\phi=(p_2\times q_1,g_1\times h_2)$ is a textile morphism from $T_1$ to $T_0$, we just need to verify that both the diagrams 

\[
\begin{tikzpicture}[scale=2.0]
\node[] (A1) at (0,0) {$\Delta(G)$};
\node[] (A2) at (2,0) {$F_1\times F_2$};
\node[] (A3) at (4,0) {$\Delta(G)$};
\node[] (A4) at (6,0) {$F_1\times F_2$};

\node[] (B1) at (0,-1.5) {$F_2\times F_1$};
\node[] (B2) at (2,-1.5) {$E\times E$};
\node[] (B3) at (4,-1.5) {$F_2\times F_1$};
\node[] (B4) at (6,-1.5) {$E\times E$};

\path[->, >=latex,thick] (A1) edge [] node[]{} (A2);
\path[->, >=latex,thick] (B1) edge [] node[]{} (B2);
\path[->, >=latex,thick] (A3) edge [] node[]{} (A4);
\path[->, >=latex,thick] (B3) edge [] node[]{} (B4);

\path[->, >=latex,thick] (A1) edge [] node[]{} (B1);
\path[->, >=latex,thick] (A2) edge [] node[]{} (B2);
\path[->, >=latex,thick] (A3) edge [] node[]{} (B3);
\path[->, >=latex,thick] (A4) edge [] node[]{} (B4);

\node at (1,0.2) {$p_1\times p_2$};
\node at (5,0.2) {$q_1\times q_2$};
\node at (1,-1.7) {$g_2\times g_1$};
\node at (5,-1.7) {$h_2\times h_1$};

\node at (-0.4,-0.75) {$p_2\times q_1$};
\node at (2.4,-0.75) {$g_1\times h_2$};
\node at (3.6,-0.75) {$p_2\times q_1$};
\node at (6.4,-0.75) {$g_1\times h_2$};
\end{tikzpicture}
\]
commute, i.e., $(g_1\circ p_1 \times h_2\circ p_2)=(g_2\circ p_2\times g_1\circ q_1)$ and $(g_1\circ q_1\times h_2\circ q_2)=(h_2\circ p_2\times h_1\circ q_1)$. But these exactly follow by using the inner-commutativity, the outer-commutativity and the cross-commutativity 1 of Definition \ref{def 3D textile}. A similar argument using the inner- and outer-commutativity and the cross-commutativity 2 shows that $\psi=(q_2\times p_1, h_1\times g_2)$ is a textile morphism. It is now clear that we can view the three-dimensional textile system as a functor $\mathcal{T}:\downdownarrows\longrightarrow \textbf{2-TSys}$ by defining $\mathcal{T}(1):=T_1$, $\mathcal{T}(0):=T_0$ and $\phi,\psi$ as the images of the parallel arrows.  
\end{proof}
Although, the above functorial description nicely incorporates all the defining conditions of a three-dimensional textile system, from this, it is somehow hard to figure out when a functor from $\downdownarrows$ to \textbf{2-TSys} gives a three-dimensional textile system. Now, we provide a complete answer to this. As it appears, we can use the following theorem to give an alternate definition of a three-dimensional textile system, exactly similar to the two-dimensional case.
\begin{thm}\label{th alternate description of 3D textile}
The diagram 
\[
\begin{tikzpicture}[scale=1]
\node[inner sep=2.5pt, circle] (B) at (-1.5,0) {$F_1$};
\node[inner sep=2.5pt, circle] (C) at (1.5,0) {$F_2.$};	
\node[inner sep=2.5pt, circle] (D) at (0,-1.5) {$E$};
\node[inner sep=2.5pt, circle] (A) at (0,1.5) {$G$}; 

\draw[transform canvas={xshift=0.6ex}, ->] (A) -- (B);
\draw[transform canvas={xshift=-0.6ex}, ->] (A) -- (B);

\draw[transform canvas={xshift=0.6ex}, ->] (A) -- (C);
\draw[transform canvas={xshift=-0.6ex}, ->] (A) -- (C);

\draw[transform canvas={xshift=0.6ex}, ->] (C) -- (D);
\draw[transform canvas={xshift=-0.6ex}, ->] (C) -- (D);

\draw[transform canvas={xshift=0.6ex}, ->] (B) -- (D);
\draw[transform canvas={xshift=-0.6ex}, ->] (B) -- (D);

\node at (-0.55,0.55) {$q_1$};
\node at (-0.95,0.95) {$p_1$};

\node at (0.55,0.55) {$q_2$};
\node at (0.95,0.95) {$p_2$};

\node at (-0.55,-0.55) {$h_1$};
\node at (-0.95,-0.95) {$g_1$};

\node at (0.55,-0.55) {$h_2$};
\node at (0.95,-0.95) {$g_2$};
\end{tikzpicture}
\] 
represents a three-dimensional textile system if and only if \[T_t:=(G,F_1,p_1,q_1),~~T_b:=(F_2,E,g_2,h_2)\] are two-dimensional textile systems and $(p_2,g_1)$, $(q_2,h_1)$ are textile morphisms from $T_t$ to $T_b$ such that the maps 
\begin{align*}
s_{F_1}\times r_{F_1}\times g_1^1\times h_1^1:F_1^1 &\longrightarrow F_1^0\times F_1^0\times E^1\times E^1,\\
p_1^0\times q_1^0\times p_2^0\times q_2^0:G^0 &\longrightarrow F_1^0\times F_1^0\times F_2^0\times F_2^0
\end{align*}
are injective.
\end{thm}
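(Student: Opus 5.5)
We will unwind both sides into the raw conditions $(\mathcal{C})$ and $(\mathcal{I})$ of Definition \ref{def 3D textile} and match them one by one. Recall that $T_t=(G,F_1,p_1,q_1)$ is a textile system exactly when $s_G\times r_G\times p_1^1\times q_1^1$ is injective on $G^1$. Likewise, $T_b=(F_2,E,g_2,h_2)$ is a textile system exactly when $s_{F_2}\times r_{F_2}\times g_2^1\times h_2^1$ is injective on $F_2^1$. A pair $(\phi_F,\phi_E)=(p_2,g_1)$ is a textile morphism $T_t\to T_b$ precisely when $g_2\circ p_2=g_1\circ p_1$ and $h_2\circ p_2=g_1\circ q_1$, i.e. outer-commutativity and cross-commutativity 1. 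Similarly $(q_2,h_1)$ is a textile morphism precisely when $g_2\circ q_2=h_1\circ p_1$ and $h_2\circ q_2=h_1\circ q_1$, i.e. cross-commutativity 2 and inner-commutativity. Here $p_2,q_2:G\to F_2$ and $g_1,h_1:F_1\to E$ are graph homomorphisms, as the definition of a textile morphism requires. Hence the two morphism conditions together are exactly $(\mathcal{C})$.

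For the forward direction, assume $\mathcal{T}$ is a three-dimensional textile. The last map in $(\mathcal{I})$ gives that $T_b$ is a textile system, and the paragraph above gives the two morphisms. The injectivity of the $F_1^1$-map and of the $G^0$-map are literally listed in $(\mathcal{I})$. It remains to see that $T_t$ is a textile system, i.e. that $s_G\times r_G\times p_1^1\times q_1^1$ is injective. This is the one non-tautological point, and the main obstacle. Suppose $x,y\in G^1$ agree in $s_G,r_G,p_1^1,q_1^1$; we must show $p_2^1(x)=p_2^1(y)$ and $q_2^1(x)=q_2^1(y)$, and then the first map in $(\mathcal{I})$ gives $x=y$. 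We attack $p_2^1$ using the injectivity of the $F_2^1$-map.
\begin{itemize}
\item Sources and ranges: $s_{F_2}(p_2^1(x))=p_2^0(s_G(x))=p_2^0(s_G(y))=s_{F_2}(p_2^1(y))$, and similarly for ranges.
\item The $E$-components: $g_2^1(p_2^1(x))=g_1^1(p_1^1(x))=g_1^1(p_1^1(y))=g_2^1(p_2^1(y))$ by outer-commutativity, and $h_2^1(p_2^1(x))=g_1^1(q_1^1(x))=g_1^1(q_1^1(y))=h_2^1(p_2^1(y))$ by cross-commutativity 1.
\end{itemize}
So $p_2^1(x)=p_2^1(y)$ by injectivity of $s_{F_2}\times r_{F_2}\times g_2^1\times h_2^1$. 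The same argument, using cross-commutativity 2 and inner-commutativity, gives $q_2^1(x)=q_2^1(y)$.

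For the converse, assume the stated conditions. Then $(\mathcal{C})$ holds by the first paragraph. In $(\mathcal{I})$, the $G^0$-map and $F_1^1$-map are injective by hypothesis, and the $F_2^1$-map is injective because $T_b$ is a textile system. The first map in $(\mathcal{I})$ refines $s_G\times r_G\times p_1^1\times q_1^1$, which is injective because $T_t$ is a textile system. Therefore it is injective too, and the diagram is a three-dimensional textile.
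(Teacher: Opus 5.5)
Your proof is correct and follows essentially the same route as the paper. You match the two textile-morphism conditions with the four commutativity relations in $(\mathcal{C})$. You then handle the only nontrivial step, injectivity of $s_G\times r_G\times p_1^1\times q_1^1$, exactly as the paper does: you recover $p_2^1$ and $q_2^1$ via injectivity of the $F_2^1$-map and conclude with the first map in $(\mathcal{I})$. You in fact spell out the correspondence between the morphism conditions and outer-, inner- and cross-commutativity more explicitly than the paper's ``moment's thought''.
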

\begin{proof}
Suppose the diagram represents a three-dimensional textile system. That $T_b$ is a two-dimensional textile follows from the definition (see Remarks \ref{rem vieweing 2D textile as a 3D textile} $(i)$). We show that $T_t$ is also a textile system in two-dimension. Assume that $(s_G\times r_G\times p_1^1\times q_1^1)(x)=(s_G\times r_G\times p_1^1\times q_1^1)(y)$ for $x,y\in G^1$. Now, \[s_{F_2}(p_2^1(x))=p_2^0(s_G(x))=p_2^0(s_G(y))=s_{F_2}(p_2^1(y)),~ r_{F_2}(p_2^1(x))=p_2^0(r_G(x))=p_2^0(r_G(y))=r_{F_2}(p_2^1(y)),\] \[g_2^1(p_2^1(x))=g_1^1(p_1^1(x))=g_1^1(p_1^1(y))=g_2^1(p_2^1(y)),~ h_2^1(p_2^1(x))=g_1^1(q_1^1(x))=g_1^1(q_1^1(y))=h_2^1(p_2^1(y)).\] Since, $s_{F_2}\times r_{F_2}\times g_2^1\times h_2^1$ is injective, it follows that $p_2^1(x)=p_2^1(y)$. Similarly, we can show $q_2^1(x)=q_2^1(y)$. The injectivity of the first map in $(\mathcal{I})$ of Definition \ref{def 3D textile} now implies $x=y$. Thus, $T_t$ is a textile system. A moment's thought shows that $(p_2,g_1)$ and $(q_2,h_1)$ are textile morphisms from $T_t$ to $T_b$ if and only if the four commutativity in $(\mathcal{C})$ of Definition \ref{def 3D textile} hold. The necessity follows. For the sufficiency, note that everything is in hand except the commutativity of the first map in $(\mathcal{I})$. But, this is also straightforward to show using the fact that $T_t$ is a two-dimensional textile system.
\end{proof}
\section{The interplay between three-dimensional textiles and 3-graphs}\label{sec 3-graphs and 3D textiles}
As described in the beginning of this section, our definition of a three-dimensional textile system is inspired by the analogy between $2$-graphs and $LR$-textile systems. We now seek to investigate a similar type of analogy between $3$-graphs and suitable three-dimensional textile systems. We start by showing that any $3$-graph naturally gives rise to a specific type of three-dimensional textile. 

For the rest of this section, we fix $\Lambda$ to be a $3$-graph. We now define four directed graphs as follows: \[G(\Lambda):=(\Lambda^{\ea+\eb},\Lambda^{\mathbf{e}_1+\mathbf{e}_2+\mathbf{e}_3},\ev_{\mathbf{e}_3,\mathbf{e}_1+\mathbf{e}_2+\mathbf{e}_3},\ev_{0,\mathbf{e}_1+\mathbf{e}_2});\] \[F_1(\Lambda):=(\Lambda^{\eb},\Lambda^{\eb+\ec},\ev_{\ec,\eb+\ec},\ev_{0,\eb}),~~F_2(\Lambda):=(\Lambda^{\ea},\Lambda^{\ec+\ea},\ev_{\ec,\ea+\ec},\ev_{0,\ea});\] \[E(\Lambda):=(\Lambda^0,\Lambda^{\ec},s_\Lambda, r_\Lambda).\]

\begin{prop}\label{pro 3-graph to 3D textile}
Let $\Lambda$ be any $3$-graph. Consider the system $\mathcal{T}_\Lambda$ presented by the diagram:

\[
\begin{tikzpicture}[scale=1.3]
\node[inner sep=0.5pt, circle] (B) at (-1.5,0) {$F_1(\Lambda)$};
\node[inner sep=0.5pt, circle] (C) at (1.5,0) {$F_2(\Lambda)$};	
\node[inner sep=0.5pt, circle] (D) at (0,-1.5) {$E(\Lambda)$};
\node[inner sep=0.5pt, circle] (A) at (0,1.5) {$G(\Lambda)$}; 

\draw[transform canvas={xshift=0.6ex}, ->] (A) -- (B);
\draw[transform canvas={xshift=-0.6ex}, ->] (A) -- (B);

\draw[transform canvas={xshift=0.6ex}, ->] (A) -- (C);
\draw[transform canvas={xshift=-0.6ex}, ->] (A) -- (C);

\draw[transform canvas={xshift=0.6ex}, ->] (C) -- (D);
\draw[transform canvas={xshift=-0.6ex}, ->] (C) -- (D);

\draw[transform canvas={xshift=0.6ex}, ->] (B) -- (D);
\draw[transform canvas={xshift=-0.6ex}, ->] (B) -- (D);

\node at (-0.55,0.55) {$q_1$};
\node at (-0.95,0.95) {$p_1$};

\node at (0.55,0.55) {$q_2$};
\node at (0.95,0.95) {$p_2$};

\node at (-0.55,-0.55) {$h_1$};
\node at (-0.95,-0.95) {$g_1$};

\node at (0.55,-0.55) {$h_2$};
\node at (0.95,-0.95) {$g_2$};
\end{tikzpicture}
\]
where \[p_1:=(\ev_{0,\eb},\ev_{0,\eb+\ec}), q_1:=(\ev_{\ea,\ea+\eb},\ev_{\ea,\ea+\eb+\ec}),p_2:=(\ev_{0,\ea},\ev_{0,\ec+\ea}),q_2:=(\ev_{\eb,\ea+\eb},\ev_{\eb,\ea+\eb+\ec})\] and \[g_1:=(r_\Lambda,\ev_{0,\ec}),h_1:=(s_\Lambda,\ev_{\eb,\eb+\ec}),g_2:=(r_\Lambda,\ev_{0,\ec}),h_2:=(s_\Lambda,\ev_{\ea,\ea+\ec}).\] Then $\mathcal{T}_\Lambda$ is a three-dimensional textile system. 
\end{prop}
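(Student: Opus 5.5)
To prove Proposition~\ref{pro 3-graph to 3D textile}, we verify the three ingredients of Definition~\ref{def 3D textile} in turn: the eight maps are graph homomorphisms, the four commutativity relations in $(\mathcal{C})$ hold, and the four maps in $(\mathcal{I})$ are injective. Every verification reduces to two facts about a $k$-graph. First, $\lambda(\mathbf{n},\mathbf{m})(\mathbf{n}',\mathbf{m}')=\lambda(\mathbf{n}+\mathbf{n}',\mathbf{n}+\mathbf{m}')$ for $\mathbf{n}'\le\mathbf{m}'\le \mathbf{m}-\mathbf{n}$. Second, $r_\Lambda(\lambda(\mathbf{n},\mathbf{m}))=\lambda(\mathbf{n},\mathbf{n})$ and $s_\Lambda(\lambda(\mathbf{n},\mathbf{m}))=\lambda(\mathbf{m},\mathbf{m})$. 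Both follow directly from the unique factorization property.

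For the homomorphism property, take for instance $p_1=(\ev_{0,\eb},\ev_{0,\eb+\ec})$ from $G(\Lambda)$ to $F_1(\Lambda)$. For $\lambda\in\Lambda^{\ea+\eb+\ec}$, the source of $p_1^1(\lambda)=\lambda(0,\eb+\ec)$ in $F_1(\Lambda)$ is $\lambda(0,\eb+\ec)(\ec,\eb+\ec)=\lambda(\ec,\eb+\ec)$. On the other hand, $p_1^0(s_G(\lambda))=\lambda(\ec,\ea+\eb+\ec)(0,\eb)=\lambda(\ec,\eb+\ec)$. Ranges are handled the same way, and so are the other seven maps. For $(\mathcal{C})$ we compute both sides as segments of the original path. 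For example, on edges $g_1^1p_1^1(\lambda)=\lambda(0,\eb+\ec)(0,\ec)=\lambda(0,\ec)=g_2^1p_2^1(\lambda)$. Likewise $h_1^1q_1^1(\lambda)=\lambda(\ea+\eb,\ea+\eb+\ec)=h_2^1q_2^1(\lambda)$, $g_1^1q_1^1(\lambda)=\lambda(\ea,\ea+\ec)=h_2^1p_2^1(\lambda)$, and $h_1^1p_1^1(\lambda)=\lambda(\eb,\eb+\ec)=g_2^1q_2^1(\lambda)$. On vertices the analogous identities hold with $r_\Lambda,s_\Lambda$ applied to elements of $\Lambda^{\ea}$, $\Lambda^{\eb}$ and $\Lambda^{\ea+\eb}$. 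This part is routine bookkeeping.

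For $(\mathcal{I})$, the two maps with domains $F_1(\Lambda)^1=\Lambda^{\eb+\ec}$ and $F_2(\Lambda)^1=\Lambda^{\ea+\ec}$ are injective already via the pair (range-side edge, source-side edge). For $\mu\in\Lambda^{\eb+\ec}$, $r_{F_1}(\mu)=\mu(0,\eb)$ and $h_1^1(\mu)=\mu(\eb,\eb+\ec)$, so unique factorization recovers $\mu=\mu(0,\eb)\mu(\eb,\eb+\ec)$. The argument for $F_2(\Lambda)$ is the same. For $G(\Lambda)^0=\Lambda^{\ea+\eb}$, $\lambda$ is determined by $p_1^0(\lambda)=\lambda(0,\eb)$ together with $q_1^0(\lambda)=\lambda(\eb,\ea+\eb)$, since $\lambda=\lambda(0,\eb)\lambda(\eb,\ea+\eb)$. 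For $G(\Lambda)^1=\Lambda^{\ea+\eb+\ec}$, $r_G(\lambda)=\lambda(0,\ea+\eb)$ and $q_2^1(\lambda)=\lambda(\eb,\ea+\eb+\ec)$ do not concatenate directly. So we first use $q_2^1(\lambda)$ to extract $\lambda(\ea+\eb,\ea+\eb+\ec)=q_2^1(\lambda)(\ea,\ea+\ec)$, and then write $\lambda=\lambda(0,\ea+\eb)\lambda(\ea+\eb,\ea+\eb+\ec)$.

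The main point requiring care is not difficulty but consistency: one must check that the chosen $\ev$ maps really match the paper's conventions. In particular, the range of an edge in $G(\Lambda)$ is its initial $(\ea+\eb)$-segment and its source is the terminal segment, and the analogous statement holds in $F_i(\Lambda)$ and $E(\Lambda)$. With these conventions the homomorphism checks line up, so I would carry those out explicitly for one map in full and state that the rest are identical.
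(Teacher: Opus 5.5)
Your proposal is correct and is essentially the paper's own argument. Like the paper, you verify the homomorphism property, the four commutativity relations and the four injectivity conditions directly, using the segment identity $\lambda(\mathbf{n},\mathbf{m})(\mathbf{n}',\mathbf{m}')=\lambda(\mathbf{n}+\mathbf{n}',\mathbf{n}+\mathbf{m}')$ and unique factorization; for $G(\Lambda)^1$ you recover the final $\ec$-edge from $q_2^1(\lambda)$ rather than from $q_1^1(\lambda)$, which is immaterial.

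There are two harmless slips. First, the paper writes a graph as $(E^0,E^1,r_E,s_E)$, so in fact $r_{G(\Lambda)}=\ev_{\ec,\ea+\eb+\ec}$, $r_{F_1(\Lambda)}=\ev_{\ec,\eb+\ec}$ and $r_{E(\Lambda)}=s_\Lambda$; you have range and source interchanged in all four graphs, but this changes nothing because Definition~\ref{def 3D textile} is invariant under reversing every graph simultaneously. Second, in the $G(\Lambda)^0$ step $\lambda(\eb,\ea+\eb)$ is $q_2^0(\lambda)$, not $q_1^0(\lambda)=\lambda(\ea,\ea+\eb)$, though $\lambda=p_1^0(\lambda)\,q_2^0(\lambda)$ still gives the required injectivity.
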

\begin{proof}
Let $\lambda\in G(\Lambda)^1$. Then \[r_{F_1(\Lambda)}(p_1(\lambda))=(\lambda(0,\eb+\ec))(\ec,\eb+\ec)=\lambda(\ec,\eb+\ec)=(\lambda(\ec,\ea+\eb+\ec))(0,\eb)=p_1(r_{G(\Lambda)}(\lambda)),\] and \[s_{F_1(\Lambda)}(p_1(\lambda))=(\lambda(0,\eb+\ec))(0,\eb)=\lambda(0,\eb)=(\lambda(0,\ea+\eb))(0,\eb)=p_1(s_{G(\Lambda)}(\lambda)).\] This shows that $p_1$ is a graph homomorphism. The other maps can similarly be proved to be graph morphisms, so we skip the details. Now we verify the four commutativity. We have \[g_1^1(p_1^1(\lambda))=(\lambda(0,\eb+\ec))(0,\ec)=\lambda(0,\ec)=(\lambda(0,\ec+\ea))(0,\ec)=g_2^1(p_2^1(\lambda)),\] and \[g_1^0(p_1^0(\mu))=r_\Lambda(\mu(0,\eb))=r_\Lambda(\mu(0,\ea))=g_2^0(p_2^0(\mu))\] for all $\lambda\in G(\Lambda)^1$ and $\mu\in G(\Lambda)^0$. Hence, the outer-commutativity follows. The inner-commutativity is similar. Also \[g_1^1(q_1^1(\lambda))=(\lambda(\ea,\ea+\eb+\ec))(0,\ec)=\lambda(\ea,\ea+\ec)=(\lambda(0,\ec+\ea))(\ea,\ea+\ec)=h_2^1(p_2^1(\lambda)),\] and \[g_1^0(q_1^0(\mu))=r_\Lambda(\mu(\ea,\ea+\eb))=s_\Lambda(\mu(0,\ea))=h_2^0(p_2^0(\mu))\] for all $\lambda\in G(\Lambda)^1$ and $\mu\in G(\Lambda)^0$. Thus, we have the cross-commutativity 1. The other cross-commutativity follows similarly. Finally, the injectivity of the four maps in $(\mathcal{I})$ of Definition \ref{def 3D textile} follow from the unique factorization of paths in $\Lambda$. For instance, if we have $\lambda,\mu\in G(\Lambda)^1$ with \[(s_{G(\Lambda)}(\lambda),r_{G(\Lambda)}(\lambda),p_1^1(\lambda),q_1^1(\lambda),p_2^1(\lambda),q_2^1(\lambda))=(s_{G(\Lambda)}(\mu),r_{G(\Lambda)}(\mu),p_1^1(\mu),q_1^1(\mu),p_2^1(\mu),q_2^1(\mu)),\] then, $\lambda(0,\ea+\eb)=\mu(0,\ea+\eb)$ and \[\lambda(\ea+\eb,\ea+\eb+\ec)=(\lambda(\ea,\ea+\eb+\ec))(\eb,\eb+\ec)=(\mu(\ea,\ea+\eb+\ec))(\eb,\eb+\ec)=\mu(\ea+\eb,\ea+\eb+\ec)\] which, in view of the unique factorization property, implies $\lambda=\mu$. 
\end{proof}

The morphisms involved in the textile system associated with a $3$-graph satisfy some nice properties. We now present these with the help of pullback squares (see Subsection \ref{ssec textile systems and 2-graphs}).  

\begin{prop}\label{pro path liftings for 3-graph textile}
Let $\Lambda$ be any $3$-graph and $\mathcal{T}_\Lambda$ the associated three-dimensional textile system. Then 

$(i)$ $g_1,g_2$ have unique $r$-path lifting;

$(ii)$ $h_1,h_2$ have unique $s$-path lifting;

$(iii)$ the diagrams 

\[
\begin{tikzpicture}[scale=1.3]
\node[] (A1) at (0,0) {$G(\Lambda)^0$};
\node[] (A2) at (2,0) {$F_1(\Lambda)^0$};
\node[] (A3) at (4,0) {$G(\Lambda)^0$};
\node[] (A4) at (6,0) {$F_1(\Lambda)^0$};

\node[] (B1) at (0,-2) {$F_2(\Lambda)^0$};
\node[] (B2) at (2,-2) {$E(\Lambda)^0$};
\node[] (B3) at (4,-2) {$F_2(\Lambda)^0$};
\node[] (B4) at (6,-2) {$E(\Lambda)^0$};

\path[->, >=latex,thick] (A1) edge [] node[]{} (A2);
\path[->, >=latex,thick] (B1) edge [] node[]{} (B2);
\path[->, >=latex,thick] (A3) edge [] node[]{} (A4);
\path[->, >=latex,thick] (B3) edge [] node[]{} (B4);

\path[->, >=latex,thick] (A1) edge [] node[]{} (B1);
\path[->, >=latex,thick] (A2) edge [] node[]{} (B2);
\path[->, >=latex,thick] (A3) edge [] node[]{} (B3);
\path[->, >=latex,thick] (A4) edge [] node[]{} (B4);

\node at (1,0.3) {$q_1^0$};
\node at (5,0.3) {$p_1^0$};
\node at (1,-2.3) {$h_2^0$};
\node at (5,-2.3) {$g_2^0$};

\node at (-0.3,-1) {$p_2^0$};
\node at (2.3,-1) {$g_1^0$};
\node at (3.7,-1) {$q_2^0$};
\node at (6.3,-1) {$h_1^0$};
\end{tikzpicture}
\]
are pullback squares;

$(iv)$ $q_1$ has $s$-path lifting.

\end{prop}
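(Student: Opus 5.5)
The plan is to check each clause directly from the unique factorization property of $\Lambda$, after first fixing the conventions. A graph is written as $(E^0,E^1,r,s)$, so in $E(\Lambda)=(\Lambda^0,\Lambda^{\ec},s_\Lambda,r_\Lambda)$ we have $r_{E(\Lambda)}=s_\Lambda$ and $s_{E(\Lambda)}=r_\Lambda$. Likewise $r_{F_1(\Lambda)}=\ev_{\ec,\eb+\ec}$ and $s_{F_1(\Lambda)}=\ev_{0,\eb}$. For $G(\Lambda)$, $r_{G(\Lambda)}=\ev_{\ec,\mathbf{1}}$ and $s_{G(\Lambda)}=\ev_{0,\ea+\eb}$. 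The one tool used repeatedly is the following fact. If $\alpha\in\Lambda^{\M}$, $\beta\in\Lambda^{\N}$ and $s_\Lambda(\alpha)=r_\Lambda(\beta)$, then $\alpha\beta$ is the unique path of degree $\M+\N$ with $(\alpha\beta)(0,\M)=\alpha$ and $(\alpha\beta)(\M,\M+\N)=\beta$. We also use the compatibility $(\lambda(\N_1,\M_1))(\N_2,\M_2)=\lambda(\N_1+\N_2,\N_1+\M_2)$, which was already used in the proof of Proposition \ref{pro 3-graph to 3D textile}.

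For $(i)$, take $v\in F_1(\Lambda)^0=\Lambda^{\eb}$ and $e\in E(\Lambda)^1=\Lambda^{\ec}$ with $g_1^0(v)=r_{E(\Lambda)}(e)$, that is, $r_\Lambda(v)=s_\Lambda(e)$. We need a unique $f\in\Lambda^{\eb+\ec}$ with $f(\ec,\eb+\ec)=v$ and $f(0,\ec)=e$. The path $f=ev$ works, and uniqueness is the factorization fact with $\M=\ec$, $\N=\eb$. The argument for $g_2$ is identical with $\ea$ in place of $\eb$. For $(ii)$, take $v\in\Lambda^{\eb}$ and $e\in\Lambda^{\ec}$ with $h_1^0(v)=s_\Lambda(v)=s_{E(\Lambda)}(e)=r_\Lambda(e)$. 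We need $f\in\Lambda^{\eb+\ec}$ with $f(0,\eb)=v$ and $f(\eb,\eb+\ec)=e$, namely $f=ve$, unique by factorization. Again $h_2$ is the same argument with $\ea$ in place of $\eb$.

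For $(iii)$, both squares commute by $(\mathcal{C})$, which holds by Proposition \ref{pro 3-graph to 3D textile}. In the left square the pullback in \textbf{Set} is the set of pairs $(a,b)\in\Lambda^{\ea}\times\Lambda^{\eb}$ with $g_1^0(b)=h_2^0(a)$, that is, $r_\Lambda(b)=s_\Lambda(a)$. The comparison map is $\lambda\mapsto(p_2^0(\lambda),q_1^0(\lambda))=(\lambda(0,\ea),\lambda(\ea,\ea+\eb))$. It is a bijection onto this set: it is surjective via $(a,b)\mapsto ab$ and injective by unique factorization for $\ea+\eb=\ea+\eb$. The right square is handled the same way using the factorization $\ea+\eb=\eb+\ea$. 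Here the comparison map is $\lambda\mapsto(q_2^0(\lambda),p_1^0(\lambda))=(\lambda(\eb,\ea+\eb),\lambda(0,\eb))$, with inverse $(a,b)\mapsto ba$ on pairs satisfying $s_\Lambda(b)=r_\Lambda(a)$.

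For $(iv)$, take $\mu\in\Lambda^{\ea+\eb}$ and $f\in\Lambda^{\eb+\ec}$ with $q_1^0(\mu)=\mu(\ea,\ea+\eb)=f(0,\eb)=s_{F_1(\Lambda)}(f)$. Put $a:=\mu(0,\ea)$. Then $s_\Lambda(a)=r_\Lambda(f(0,\eb))=r_\Lambda(f)$, so $\lambda:=af\in\Lambda^{\mathbf{1}}$ is defined. We then check two things. First, $s_{G(\Lambda)}(\lambda)=\lambda(0,\ea+\eb)=a\,f(0,\eb)=\mu$, using the compatibility of $\ev$ with composites. Second, $q_1^1(\lambda)=\lambda(\ea,\mathbf{1})=f$. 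Uniqueness also comes for free: any such $\lambda$ satisfies $\lambda=\lambda(0,\ea)\lambda(\ea,\mathbf{1})=\mu(0,\ea)f$. The only real obstacle throughout is bookkeeping: keeping straight the swapped roles of $r$ and $s$ in the graph conventions, and justifying identities such as $(af)(0,\ea+\eb)=a\,f(0,\eb)$. The latter follows by factoring $f=f(0,\eb)f(\eb,\eb+\ec)$ and applying unique factorization to $af$.
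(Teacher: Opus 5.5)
Your proposal is correct and follows essentially the same route as the paper: every clause is checked directly from unique factorization in $\Lambda$. For $(iii)$ you show the comparison map to the set-theoretic fibre product is a bijection, which is equivalent to the paper's explicit universal-property check against an arbitrary set $Q$. For $(iv)$ you lift via the path $\mu(0,\ea)f$, as the paper does, and you also write out $(i)$ and $(ii)$, which the paper leaves to the reader.
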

\begin{proof} We only prove $(iii)$ and $(iv)$. Similar line of argument, which we omit, can be employed to establish $(i)$ and $(ii)$. 

$(iii)$ The diagrams in $(iii)$ are commutative by $(\mathcal{C})$ of Definition \ref{def 3D textile}. Suppose $Q$ is a set and we have maps $\alpha:Q\longrightarrow F_1(\Lambda)^0$, $\beta:Q\longrightarrow F_2(\Lambda)^0$ such that $g_1^0 \circ \alpha=h_2^0 \circ \beta$. Choose any $x\in Q$. Then, $g_1^0(\alpha(x))=h_2^0(\beta(x))$ which implies $r_\Lambda(\alpha(x))=s_\Lambda(\beta(x))$. Let $\lambda_x=\beta(x)\alpha(x)$. Then $\lambda_x\in \Lambda^{\ea+\eb}=G(\Lambda)^0$ and \[q_1^0(\lambda_x)=\lambda_x(\ea,\ea+\eb)=\alpha(x)\] and \[p_2^0(\lambda_x)=\lambda_x(0,\ea)=\beta(x).\] By unique factorization, it follows that $\lambda_x$ is the only element in $G(\Lambda)^0$ with these properties. Define a map 
\begin{align*}
\lambda:Q &\longrightarrow G(\Lambda)^0\\
x &\longmapsto \lambda_x.
\end{align*}
Then it is easy to see that $\lambda$ is the unique map with the property that $q_1^0\circ \lambda=\alpha$ and $p_2^0\circ \lambda=\beta$. Hence, the first diagram is a pullback square. The case for the second one is similar.

$(iv)$ Suppose $\lambda\in G(\Lambda)^0$ and $\mu\in F_1(\Lambda)^1$ are such that $q_1^0(\lambda)=s_{F_1(\Lambda)}(\mu)$. This means $\lambda(\ea,\ea+\eb)=\mu(0,\eb)$. Let \[\gamma:=\lambda(0,\ea)\lambda(\ea,\ea+\eb)\mu(\eb,\eb+\ec)=\lambda(0,\ea)\mu(0,\eb)\mu(\eb,\eb+\ec)\in \Lambda^{\ea+\eb+\ec}=G(\Lambda)^1.\] Then, $s_{G(\Lambda)}(\gamma)=\lambda$ and $q_1^1(\gamma)=\mu$. Note that \[\gamma(\ea+\eb,\ea+\eb+\ec)=(\gamma(\ea,\ea+\eb+\ec))(\eb,\eb+\ec)=\mu(\eb,\eb+\ec).\] If $\delta\in G(\Lambda)^1$ is such that $s_{G(\Lambda)}(\delta)=\lambda$ and $q_1^1(\delta)=\mu$ then \[\gamma(0,\ea+\eb)=\lambda=\delta(0,\ea+\eb)\] and \[\gamma(\ea+\eb,\ea+\eb+\ec)=\mu(\eb,\eb+\ec)=\delta(\ea+\eb,\ea=\eb+\ec).\] By the unique factorization, this implies $\gamma=\delta$. Thus, $q_1$ has unique $s$-path lifting.
\end{proof}

The properties $(i)$--$(iv)$ of the conclusion of Proposition \ref{pro path liftings for 3-graph textile} can yield several other pullback squares in a three-dimensional textile system. We establish this in the following lemma; the proof highlights the advantage of expressing path lifting properties using pullback squares.
\begin{lem}\label{lem other pullbacks}
Let $\mathcal{T}$ be a three-dimensional textile presented by the following diagram: 

\[
\begin{tikzpicture}[scale=1]
\node[inner sep=2.5pt, circle] (B) at (-1.5,0) {$F_1$};
\node[inner sep=2.5pt, circle] (C) at (1.5,0) {$F_2.$};	
\node[inner sep=2.5pt, circle] (D) at (0,-1.5) {$E$};
\node[inner sep=2.5pt, circle] (A) at (0,1.5) {$G$}; 

\draw[transform canvas={xshift=0.6ex}, ->] (A) -- (B);
\draw[transform canvas={xshift=-0.6ex}, ->] (A) -- (B);

\draw[transform canvas={xshift=0.6ex}, ->] (A) -- (C);
\draw[transform canvas={xshift=-0.6ex}, ->] (A) -- (C);

\draw[transform canvas={xshift=0.6ex}, ->] (C) -- (D);
\draw[transform canvas={xshift=-0.6ex}, ->] (C) -- (D);

\draw[transform canvas={xshift=0.6ex}, ->] (B) -- (D);
\draw[transform canvas={xshift=-0.6ex}, ->] (B) -- (D);

\node at (-0.55,0.55) {$q_1$};
\node at (-0.95,0.95) {$p_1$};

\node at (0.55,0.55) {$q_2$};
\node at (0.95,0.95) {$p_2$};

\node at (-0.55,-0.55) {$h_1$};
\node at (-0.95,-0.95) {$g_1$};

\node at (0.55,-0.55) {$h_2$};
\node at (0.95,-0.95) {$g_2$};
\end{tikzpicture}
\] Suppose the following hold: 

$(i)$ $g_1,g_2$ have unique $r$-path lifting;

$(ii)$ $h_1,h_2$ have unique $s$-path lifting;

$(iii)$ the diagrams 

\[
\begin{tikzpicture}[scale=1]
\node[] (A1) at (0,0) {$G^0$};
\node[] (A2) at (2,0) {$F_1^0$};
\node[] (A3) at (4,0) {$G^0$};
\node[] (A4) at (6,0) {$F_1^0$};

\node[] (B1) at (0,-2) {$F_2^0$};
\node[] (B2) at (2,-2) {$E^0$};
\node[] (B3) at (4,-2) {$F_2^0$};
\node[] (B4) at (6,-2) {$E^0$};

\path[->, >=latex,thick] (A1) edge [] node[]{} (A2);
\path[->, >=latex,thick] (B1) edge [] node[]{} (B2);
\path[->, >=latex,thick] (A3) edge [] node[]{} (A4);
\path[->, >=latex,thick] (B3) edge [] node[]{} (B4);

\path[->, >=latex,thick] (A1) edge [] node[]{} (B1);
\path[->, >=latex,thick] (A2) edge [] node[]{} (B2);
\path[->, >=latex,thick] (A3) edge [] node[]{} (B3);
\path[->, >=latex,thick] (A4) edge [] node[]{} (B4);

\node at (1,0.3) {$q_1^0$};
\node at (5,0.3) {$p_1^0$};
\node at (1,-2.3) {$h_2^0$};
\node at (5,-2.3) {$g_2^0$};

\node at (-0.3,-1) {$p_2^0$};
\node at (2.3,-1) {$g_1^0$};
\node at (3.7,-1) {$q_2^0$};
\node at (6.3,-1) {$h_1^0$};
\end{tikzpicture}
\]
are pullback squares;

$(iv)$ $q_1$ has $s$-path lifting. 

Then $q_1$ has unique $s$-path lifting and 

$(a)$ the diagrams 

\[
\begin{tikzpicture}[scale=1]
\node[] (A1) at (0,0) {$G^1$};
\node[] (A2) at (2,0) {$F_1^1$};
\node[] (A3) at (4,0) {$G^1$};
\node[] (A4) at (6,0) {$F_1^1$};

\node[] (B1) at (0,-2) {$F_2^1$};
\node[] (B2) at (2,-2) {$E^1$};
\node[] (B3) at (4,-2) {$F_2^1$};
\node[] (B4) at (6,-2) {$E^1$};

\path[->, >=latex,thick] (A1) edge [] node[]{} (A2);
\path[->, >=latex,thick] (B1) edge [] node[]{} (B2);
\path[->, >=latex,thick] (A3) edge [] node[]{} (A4);
\path[->, >=latex,thick] (B3) edge [] node[]{} (B4);

\path[->, >=latex,thick] (A1) edge [] node[]{} (B1);
\path[->, >=latex,thick] (A2) edge [] node[]{} (B2);
\path[->, >=latex,thick] (A3) edge [] node[]{} (B3);
\path[->, >=latex,thick] (A4) edge [] node[]{} (B4);

\node at (1,0.3) {$q_1^1$};
\node at (5,0.3) {$p_1^1$};
\node at (1,-2.3) {$h_2^1$};
\node at (5,-2.3) {$g_2^1$};

\node at (-0.3,-1) {$p_2^1$};
\node at (2.3,-1) {$g_1^1$};
\node at (3.7,-1) {$q_2^1$};
\node at (6.3,-1) {$h_1^1$};
\end{tikzpicture}
\]
are pullback squares;

$(b)$ $q_2$ has unique $s$-path lifting;

$(c)$ $p_1,p_2$ have unique $r$-path lifting.
\end{lem}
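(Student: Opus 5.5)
The plan is to derive everything from the four hypotheses together with two facts. First, the vertex pullbacks in $(iii)$ identify $G^0$ with $F_1^0\times_{E^0}F_2^0$ in two ways. Second, the injectivity of $s_G\times r_G\times p_1^1\times q_1^1\times p_2^1\times q_2^1$ from $(\mathcal{I})$ of Definition \ref{def 3D textile} lets us separate edges of $G$. Throughout I will use the four commutativities $(\mathcal{C})$ and the fact that all maps are graph homomorphisms to check that sources, ranges and labels match before invoking a lifting property.

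\emph{Uniqueness for $q_1$.} Let $x,y\in G^1$ with $s_G(x)=s_G(y)=u$ and $q_1^1(x)=q_1^1(y)=f$.
\begin{enumerate}
\item Since $s_{F_2}(p_2^1x)=p_2^0(u)$ and $h_2^1(p_2^1x)=g_1^1(f)$ (cross-commutativity 1), and the same holds for $y$, unique $s$-path lifting of $h_2$ gives $p_2^1x=p_2^1y$.
\item The vertex $r_G(x)$ satisfies $q_1^0(r_Gx)=r_{F_1}(f)$ and $p_2^0(r_Gx)=r_{F_2}(p_2^1x)$. The uniqueness part of the first pullback in $(iii)$ then forces $r_Gx=r_Gy$.
\item Next, $p_1^1x$ has range $p_1^0(r_Gx)$ and $g_1^1(p_1^1x)=g_2^1(p_2^1x)$ (outer commutativity). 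Unique $r$-path lifting of $g_1$ therefore gives $p_1^1x=p_1^1y$.
\item Similarly, $q_2^1x$ has range $q_2^0(r_Gx)$ and $g_2^1(q_2^1x)=h_1^1(p_1^1x)$ (cross-commutativity 2). Unique $r$-path lifting of $g_2$ gives $q_2^1x=q_2^1y$.
\end{enumerate}
Injectivity in $(\mathcal{I})$ then yields $x=y$. The same chain of deductions serves as the uniqueness argument in every later part, since any $x$ is determined once $q_1^1x$ (or $p_1^1x$) and one endpoint are fixed.

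\emph{(a), first square.} Given $f\in F_1^1$ and $f'\in F_2^1$ with $g_1^1(f)=h_2^1(f')$, I will construct $x\in G^1$ with $q_1^1x=f$ and $p_2^1x=f'$.
\begin{enumerate}
\item Taking sources gives $g_1^0(s f)=h_2^0(s f')$, so the first pullback in $(iii)$ produces $u\in G^0$ with $q_1^0u=s_{F_1}f$ and $p_2^0u=s_{F_2}f'$.
\item The $s$-path lifting of $q_1$ gives $x$ with $s_G x=u$ and $q_1^1x=f$.
\item Then $p_2^1x$ and $f'$ share the source $p_2^0u$ and the $h_2$-label $g_1^1(f)$, so unique $s$-lifting of $h_2$ gives $p_2^1x=f'$.
\end{enumerate}
For uniqueness, the source of any such $x$ is pinned down by the pullback, and then we use unique $s$-lifting of $q_1$.

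\emph{(a), second square.} This is the main obstacle, because we have no lifting property for $p_1$ or $q_2$ yet. Given $f\in F_1^1$ and $f'\in F_2^1$ with $h_1^1f=g_2^1f'$, I will work at the range end.
\begin{enumerate}
\item The second vertex pullback gives $v\in G^0$ with $p_1^0v=r f$ and $q_2^0v=r f'$.
\item Unique $r$-lifting of $g_2$ gives $f_2\in F_2^1$ with range $p_2^0v$ and $g_2^1f_2=g_1^1f$. The ranges match by outer commutativity.
\item Unique $r$-lifting of $g_1$ gives $f_1\in F_1^1$ with range $q_1^0v$ and $g_1^1f_1=h_2^1f_2$. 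The ranges match by cross-commutativity 1.
\item By the first square, there is $x$ with $q_1^1x=f_1$ and $p_2^1x=f_2$. The first vertex pullback forces $r_Gx=v$.
\item Finally, unique $r$-lifting of $g_1$ gives $p_1^1x=f$, since both have range $p_1^0v$ and $g_1$-label $g_2^1f_2=g_1^1f$. Unique $r$-lifting of $g_2$ likewise gives $q_2^1x=f'$.
\end{enumerate}
Uniqueness follows by the argument from the first paragraph.

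\emph{(b) and (c).} These become formal once both squares in (a) are pullbacks.
\begin{itemize}
\item For (b), let $u\in G^0$ and $f'\in F_2^1$ with $q_2^0u=s f'$. Unique $s$-lifting of $h_1$ gives $f\in F_1^1$ with $s f=p_1^0u$ and $h_1^1f=g_2^1f'$. The second square then gives $x$ with $p_1^1x=f$ and $q_2^1x=f'$, and the second vertex pullback shows $s_Gx=u$.
\item For (c) and $p_1$, let $v\in G^0$ and $f\in F_1^1$ with $r f=p_1^0v$. Lift via $g_2$ (unique $r$-lifting) to $f'\in F_2^1$ with range $q_2^0v$ and $g_2^1f'=h_1^1f$, then apply the second square.
\item For (c) and $p_2$, let $v\in G^0$ and $f_2\in F_2^1$ with $r f_2=p_2^0v$. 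Lift via $g_1$ to $f_1\in F_1^1$ with range $q_1^0v$ and $g_1^1f_1=h_2^1f_2$, then apply the first square.
\item In each case, uniqueness follows from the uniqueness of the pullback factorisation together with the vertex pullbacks.
\end{itemize}
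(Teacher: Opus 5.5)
Your proof is correct. For the second half it follows a different route from the paper.

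The paper argues with the pasting law for pullback squares, which in $\textbf{Set}$ is exactly your element chase. Its order of deduction is: unique $s$-lifting of $q_1$, then the first square in $(a)$, then $(b)$, then the second square in $(a)$, then $(c)$. The key difference is that the paper obtains $(b)$ directly, without using any vertex pullback:
\begin{enumerate}
\item Given $u\in G^0$ and $f'\in F_2^1$ with $q_2^0(u)=s_{F_2}(f')$, lift $h_2^1(f')$ through $h_1$ starting at $q_1^0(u)$. Inner commutativity makes the sources match.
\item Lift the resulting edge through $q_1$ starting at $u$.
\item Identify $q_2^1$ of that edge with $f'$, using unique $s$-lifting of $h_2$ and inner commutativity again.
\end{enumerate}
With $(b)$ in hand, the paper proves the second square of $(a)$ at the source end, exactly as it proved the first one, using the second vertex pullback and unique $s$-lifting of $h_1$. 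So the ``main obstacle'' you identify (no lifting property for $p_1$ or $q_2$ yet) can be avoided, and the symmetry behind the six equivalent properties in the subsequent remark becomes visible.

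You instead prove the second square at the range end, using the unique $r$-liftings of $g_1,g_2$, both vertex pullbacks and the first square, and then derive $(b)$ from it. This is longer, but every identification checks out. It also has a side benefit: your range-end argument is essentially the proof of $(c)$, which then becomes immediate.

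A similar difference appears in your uniqueness argument for $q_1$. You fix $r_G$ early and then pin down $p_1^1$ and $q_2^1$ by the $r$-liftings of $g_1,g_2$. The paper pins down $q_2^1$ and $p_1^1$ by the $s$-liftings of $h_2$ (inner commutativity) and $h_1$ (cross-commutativity 2), and fixes $r_G$ last. Both work.

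One presentational caution. You remark that $x$ is determined by $q_1^1(x)$ or $p_1^1(x)$ together with ``one endpoint''. This holds only for the pairs $(s_G(x),q_1^1(x))$ and $(r_G(x),p_1^1(x))$. For instance, $(r_G(x),q_1^1(x))$ does not determine $x$ in general: already in $\mathcal{T}_\Lambda$ this would force every $\mathbf{e}_1\mathbf{e}_3$-square to be determined by its two terminal edges, which fails for suitable $\Lambda$.

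So for uniqueness in the second square of $(a)$, argue as follows. The second vertex pullback fixes $r_G(x)$, and $p_1^1(x)$ is given. The $r$-lifting of $g_2$ then fixes $p_2^1(x)$, the $r$-lifting of $g_1$ fixes $q_1^1(x)$, and the first square finishes. This is what your argument actually needs, and it goes through.
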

\begin{proof}
To show that the $s$-path lifting of $q_1$ is unique, consider $x,y\in G^1$ such that $s_G(x)=s_G(y)$ and $q_1^1(x)=q_1^1(y)$. Then \[s_{F_2}(p_2^1(x))=p_2^0(s_G(x))=p_2^0(s_G(y))=s_{F_2}(p_2^1(y)),\] and \[h_2^1(p_2^1(x))=g_1^1(q_1^1(x))=g_1^1(q_1^1(y))=h_2^1(p_2^1(y)).\] This implies $p_2^1(x)=p_2^1(y)$ by unique $s$-path lifting of $h_2$. A similar argument yields $q_2^1(x)=q_2^1(y)$. Again, \[s_{F_1}(p_1^1(x))=p_1^0(s_G(x))=p_1^0(s_G(y))=s_{F_1}(p_1^1(y)),\] and \[h_1^1(p_1^1(x))=g_2^1(q_2^1(x))=g_2^1(q_2^1(y))=h_1^1(p_1^1(y)),\] which gives $p_1^1(x)=p_1^1(y)$ by unique $s$-path lifting property of $h_1$. Similarly, using the first pullback square of $(iii)$, it can be shown that $r_G(x)=r_G(y)$. Since $s_G\times r_G\times p_1^1\times q_1^1\times p_2^1\times q_2^1$ is injective, we have $x=y$ and thus $q_1$ has unique $s$-path lifting. We now show that the first square in $(a)$ is a pullback square. For this, consider the following diagram:

\[
\begin{tikzpicture}[scale=1.7]
\node[] (A1) at (0,0) {$G^1$};
\node[] (A2) at (2,0) {$F_2^1$};
\node[] (A3) at (0,-2) {$F_1^1$};
\node[] (A4) at (2,-2) {$E^1$};

\node[] (B1) at (1.6,0.7) {$G^0$};
\node[] (B2) at (4,0) {$F_2^0$};
\node[] (B3) at (1.6,-1.3) {$F_1^0$};
\node[] (B4) at (4,-2) {$E^0$};

\path[->, black, >=latex,thick] (A1) edge [] node[]{} (A2);
\path[->, black, >=latex,thick] (A3) edge [] node[]{} (A4);
\path[->, black, >=latex,thick] (A1) edge [] node[]{} (A3);
\path[->, black, >=latex,thick] (A2) edge [] node[]{} (A4);

\path[->,black, dashed, >=latex,thick] (B1) edge [] node[]{} (B3);
\path[->,black, >=latex,thick] (A4) edge [] node[]{} (B4);
\path[->,black, >=latex,thick] (A2) edge [] node[]{} (B2);
\path[->,black, >=latex,thick] (B2) edge [] node[]{} (B4);

\path[->,black, dashed, >=latex,thick] (A1) edge [] node {} (B1);
\path[->,black, dashed, >=latex,thick] (A3) edge [] node {} (B3);
\path[->,black, dashed, >=latex,thick] (B1) edge [] node {} (B2);
\path[->,black, dashed, >=latex,thick] (B3) edge [] node {} (B4);

\node at (1,-0.2) {$p_2^1$};
\node at (1,-2.2) {$g_1^1$};
\node at (-0.2,-1) {$q_1^1$};
\node at (2.2,-1) {$h_2^1$};

\node at (3,-0.2) {$s_{F_2}$};
\node at (3,-2.2) {$s_E$};
\node at (4.2,-1) {$h_2^0$.};

\node at (0.8,0.5) {$s_G$};
\node at (0.8,-1.5) {$s_{F_1}$};
\node at (2.8,0.5) {$p_2^0$};
\node at (2.8,-1.5) {$g_1^0$};

\node at (1.4,-0.3) {$q_1^0$};

\end{tikzpicture}
\]
We observe that the two squares in the back are pullback squares. The right one is essentially the first square in $(iii)$, whereas the left one is a pullback square, since $q_1$ has unique $s$-path lifting. By pasting law for pullbacks, the rectangle in the back (formed by pasting the left and right squares, side by side) is a pullback rectangle. Now, $p_2$ and $g_1$ are graph morphisms, which implies $s_{F_2}\circ p_2^1=p_2^0\circ s_G$ and $s_E\circ g_1^1=g_1^0\circ s_{F_1}$. It follows that the rectangle in the front side, obtained by attaching the left and right squares along the side $h_2^1$ and then ignoring this side, is a pullback rectangle. Again, the right-hand square is a pullback due to the unique $s$-path lifting of $h_2$. Applying the pasting law again, we can conclude that the left-hand square is a pullback and we are done. 

Before tackling the second square in $(a)$, we prove $(b)$. Consider the diagram:

\[
\begin{tikzpicture}[scale=1.7]
\node[] (A1) at (0,0) {$G^1$};
\node[] (A2) at (2,0) {$F_2^1$};
\node[] (A3) at (0,-2) {$G^0$};
\node[] (A4) at (2,-2) {$F_2^0$};

\node[] (B1) at (1.6,0.7) {$F_1^1$};
\node[] (B2) at (4,0) {$E^1$};
\node[] (B3) at (1.6,-1.3) {$F_1^0$};
\node[] (B4) at (4,-2) {$E^0$};

\path[->, black, >=latex,thick] (A1) edge [] node[]{} (A2);
\path[->, black, >=latex,thick] (A3) edge [] node[]{} (A4);
\path[->, black, >=latex,thick] (A1) edge [] node[]{} (A3);
\path[->, black, >=latex,thick] (A2) edge [] node[]{} (A4);

\path[->,black, dashed, >=latex,thick] (B1) edge [] node[]{} (B3);
\path[->,black, >=latex,thick] (A4) edge [] node[]{} (B4);
\path[->,black, >=latex,thick] (A2) edge [] node[]{} (B2);
\path[->,black, >=latex,thick] (B2) edge [] node[]{} (B4);

\path[->,black, dashed, >=latex,thick] (A1) edge [] node {} (B1);
\path[->,black, dashed, >=latex,thick] (A3) edge [] node {} (B3);
\path[->,black, dashed, >=latex,thick] (B1) edge [] node {} (B2);
\path[->,black, dashed, >=latex,thick] (B3) edge [] node {} (B4);

\node at (1,-0.2) {$q_2^1$};
\node at (1,-2.2) {$q_2^0$};
\node at (-0.2,-1) {$s_G$};
\node at (2.2,-1) {$s_{F_2}$};

\node at (3,-0.2) {$h_2^1$};
\node at (3,-2.2) {$h_2^0$};
\node at (4.2,-1) {$s_E$.};

\node at (0.8,0.5) {$q_1^1$};
\node at (0.8,-1.5) {$q_1^0$};
\node at (2.8,0.5) {$h_1^1$};
\node at (2.8,-1.5) {$h_1^0$};

\node at (1.4,-0.3) {$s_{F_1}$};

\end{tikzpicture}
\] Since $q_1,h_1$ have unique $s$-path lifting, the two squares in the back are pullback squares. Pasting these two along the side $s_{F_1}$, we have a pullback rectangle in the back. Note that, $h_1^1\circ q_1^1=h_2^1\circ q_2^1$ and $h_1^0\circ q_1^0=h_2^0\circ q_2^0$ by the inner-commutativity. Hence, the rectangle in the front (made with solid edges) is a pullback rectangle. Finally, since the right-hand square in the front is a pullback by the unique $s$-path lifting of $h_2$, it follows that the left-hand square is also a pullback whence, $q_2$ has unique $s$-path lifting. Now, with the unique $s$-path lifting of $q_2$ in hand, a similar line of argument, applied on the diagram:
\[
\begin{tikzpicture}[scale=1.7]
\node[] (A1) at (0,0) {$G^1$};
\node[] (A2) at (2,0) {$F_1^1$};
\node[] (A3) at (0,-2) {$F_2^1$};
\node[] (A4) at (2,-2) {$E^1$};

\node[] (B1) at (1.6,0.7) {$G^0$};
\node[] (B2) at (4,0) {$F_1^0$};
\node[] (B3) at (1.6,-1.3) {$F_2^0$};
\node[] (B4) at (4,-2) {$E^0$};

\path[->, black, >=latex,thick] (A1) edge [] node[]{} (A2);
\path[->, black, >=latex,thick] (A3) edge [] node[]{} (A4);
\path[->, black, >=latex,thick] (A1) edge [] node[]{} (A3);
\path[->, black, >=latex,thick] (A2) edge [] node[]{} (A4);

\path[->,black, dashed, >=latex,thick] (B1) edge [] node[]{} (B3);
\path[->,black, >=latex,thick] (A4) edge [] node[]{} (B4);
\path[->,black, >=latex,thick] (A2) edge [] node[]{} (B2);
\path[->,black, >=latex,thick] (B2) edge [] node[]{} (B4);

\path[->,black, dashed, >=latex,thick] (A1) edge [] node {} (B1);
\path[->,black, dashed, >=latex,thick] (A3) edge [] node {} (B3);
\path[->,black, dashed, >=latex,thick] (B1) edge [] node {} (B2);
\path[->,black, dashed, >=latex,thick] (B3) edge [] node {} (B4);

\node at (1,-0.2) {$p_1^1$};
\node at (1,-2.2) {$g_2^1$};
\node at (-0.2,-1) {$q_2^1$};
\node at (2.2,-1) {$h_1^1$};

\node at (3,-0.2) {$s_{F_1}$};
\node at (3,-2.2) {$s_E$};
\node at (4.2,-1) {$h_1^0$,};

\node at (0.8,0.5) {$s_G$};
\node at (0.8,-1.5) {$s_{F_2}$};
\node at (2.8,0.5) {$p_1^0$};
\node at (2.8,-1.5) {$g_2^0$};

\node at (1.4,-0.3) {$q_2^0$};

\end{tikzpicture}
\] shows that the second square in $(a)$ is a pullback. Again, using this pullback square, it can be shown that $p_1$ has unique $r$-path lifting, for which one needs to apply similar order of reasoning on the diagram:

\[
\begin{tikzpicture}[scale=1.7]
\node[] (A1) at (0,0) {$G^1$};
\node[] (A2) at (2,0) {$G^0$};
\node[] (A3) at (0,-2) {$F_1^1$};
\node[] (A4) at (2,-2) {$F_1^0$};

\node[] (B1) at (1.6,0.7) {$F_2^1$};
\node[] (B2) at (4,0) {$F_2^0$};
\node[] (B3) at (1.6,-1.3) {$E^1$};
\node[] (B4) at (4,-2) {$E^0$};

\path[->, black, >=latex,thick] (A1) edge [] node[]{} (A2);
\path[->, black, >=latex,thick] (A3) edge [] node[]{} (A4);
\path[->, black, >=latex,thick] (A1) edge [] node[]{} (A3);
\path[->, black, >=latex,thick] (A2) edge [] node[]{} (A4);

\path[->,black, dashed, >=latex,thick] (B1) edge [] node[]{} (B3);
\path[->,black, >=latex,thick] (A4) edge [] node[]{} (B4);
\path[->,black, >=latex,thick] (A2) edge [] node[]{} (B2);
\path[->,black, >=latex,thick] (B2) edge [] node[]{} (B4);

\path[->,black, dashed, >=latex,thick] (A1) edge [] node {} (B1);
\path[->,black, dashed, >=latex,thick] (A3) edge [] node {} (B3);
\path[->,black, dashed, >=latex,thick] (B1) edge [] node {} (B2);
\path[->,black, dashed, >=latex,thick] (B3) edge [] node {} (B4);

\node at (1,-0.2) {$r_G$};
\node at (1,-2.2) {$r_{F_1}$};
\node at (-0.2,-1) {$p_1^1$};
\node at (2.2,-1) {$p_1^0$};

\node at (3,-0.2) {$q_2^0$};
\node at (3,-2.2) {$h_1^0$};
\node at (4.2,-1) {$g_2^0$.};

\node at (0.8,0.5) {$q_2^1$};
\node at (0.8,-1.5) {$h_1^1$};
\node at (2.8,0.5) {$r_{F_2}$};
\node at (2.8,-1.5) {$r_E$};

\node at (1.4,-0.3) {$g_2^1$};

\end{tikzpicture}
\] Finally, interchanging the subscripts $1$ and $2$ at relevant nodes and arrows in the above diagram, and using necessary pullbacks in the resulting diagram, we can show that $p_2$ has unique $r$-path lifting. 
\end{proof}

\begin{rmk}\label{rem quivalence of the pullbacks}
In the above proof, we use the unique $s$-path lifting of $q_1$ to establish five other pullback squares. Interestingly, we can do this in any order we like. Under the conditions $(i)$--$(iii)$ of Lemma \ref{lem other pullbacks}, any one of the six properties: $(1)$ $q_1$ has unique $s$-path lifting, $(2)$ $q_2$ has unique $s$-path lifting, $(3)$ $p_1$ has unique $r$-path lifting, $(4)$ $p_2$ has unique $r$-path lifting, $(5)$ the first diagram in $(a)$ is a pullback square, $(6)$ the second diagram in $(a)$ is a pullback square; implies the remaining five. To accomplish this, one just need to play with the four diagrams in the proof of Lemma \ref{lem other pullbacks} and use these in a suitable order. Our choice of starting with the $s$-path lifting of $q_1$ is made following the definition of $\mathcal{T}_\Lambda$ and will turn out to be canonical in the proof of Theorem \ref{th 3D textile to 3-graph} below.  
\end{rmk}
Proposition \ref{pro 3-graph to 3D textile} says that any $3$-graph gives a three-dimensional textile. We now turn our attention to the other way round. Suppose $\mathcal{T}$ is a three-dimensional textile presented by the diagram: 

\[
\begin{tikzpicture}[scale=1]
\node[inner sep=2.5pt, circle] (B) at (-1.5,0) {$F_1$};
\node[inner sep=2.5pt, circle] (C) at (1.5,0) {$F_2$.};	
\node[inner sep=2.5pt, circle] (D) at (0,-1.5) {$E$};
\node[inner sep=2.5pt, circle] (A) at (0,1.5) {$G$}; 

\draw[transform canvas={xshift=0.6ex}, ->] (A) -- (B);
\draw[transform canvas={xshift=-0.6ex}, ->] (A) -- (B);

\draw[transform canvas={xshift=0.6ex}, ->] (A) -- (C);
\draw[transform canvas={xshift=-0.6ex}, ->] (A) -- (C);

\draw[transform canvas={xshift=0.6ex}, ->] (C) -- (D);
\draw[transform canvas={xshift=-0.6ex}, ->] (C) -- (D);

\draw[transform canvas={xshift=0.6ex}, ->] (B) -- (D);
\draw[transform canvas={xshift=-0.6ex}, ->] (B) -- (D);

\node at (-0.55,0.55) {$q_1$};
\node at (-0.95,0.95) {$p_1$};

\node at (0.55,0.55) {$q_2$};
\node at (0.95,0.95) {$p_2$};

\node at (-0.55,-0.55) {$h_1$};
\node at (-0.95,-0.95) {$g_1$};

\node at (0.55,-0.55) {$h_2$};
\node at (0.95,-0.95) {$g_2$};
\end{tikzpicture}
\]

Define a directed graph $G_{\mathcal{T}}$ with $G_{\mathcal{T}}^0:=E^0$, $G_{\mathcal{T}}^1:=F_1^0\cup F_2^0\cup E^1$. The range and source maps are defined as follows:
\[r(x):=
	\left\{
	\begin{array}{lll}
		h_1(x)  & \mbox{if } x\in F_1^0, \\
		h_2(x) & \mbox{if }  x\in F_2^0,\\
        r_E(x) & \mbox{if }  x\in E^1,
	\end{array}
	\right.~~~
s(x):=
	\left\{
	\begin{array}{lll}
		g_1(x)  & \mbox{if } x\in F_1^0, \\
		g_2(x) & \mbox{if }  x\in F_2^0,\\
        s_E(x) & \mbox{if }  x\in E^1,
	\end{array}
	\right. 
\]
for all $x\in G_{\mathcal{T}}^1$. We consider $G_{\mathcal{T}}$ as a $3$-colored graph by setting the colors of the edges using the map
\[
d(x):=
	\left\{
	\begin{array}{lll}
		\eb  & \mbox{if } x\in F_1^0, \\
		\ea & \mbox{if }  x\in F_2^0,\\
        \ec & \mbox{if }  x\in E^1,
	\end{array}
	\right. 
\] and then extending this additively to color any finite path. Next, we will define a certain congruence on the free category $G_{\mathcal{T}}^*$. Let us start by defining binary relations on the three sets of bi-colored paths:

$\bullet$ \emph{$R^{12}$ on $\ea$-$\eb$ bi-colored paths}: Let $v,v'\in F_2^0$, $w,w'\in F_1^0$ such that $vw,w'v'\in G_{\mathcal{T}}^2$. Then $vw R^{12} w'v'$ if there exists $x\in G^0$ such that $p_2^0(x)=v$, $q_2^0(x)=v'$, $p_1^0(x)=w'$ and $q_1^0(x)=w$.

$\bullet$ \emph{$R^{23}$ on $\eb$-$\ec$ bi-colored paths}: Let $w,w'\in F_1^0$, $e,e'\in E^1$ such that $we,e'w'\in G_{\mathcal{T}}^2$. Then $we R^{23} e'w'$ if there exists $x\in F_1^1$ such that $s_{F_1}(x)=w$, $r_{F_1}(x)=w'$, $g_1(x)=e'$ and $h_1(x)=e$.

$\bullet$ \emph{$R^{13}$ on $\ea$-$\ec$ bi-colored paths}: Let $v,v'\in F_2^0$, $e,e'\in E^1$ such that $ve,e'v'\in G_{\mathcal{T}}^2$. Then $ve R^{13} e'v'$ if there exists $x\in F_2^1$ such that $s_{F_2}(x)=v$, $r_{F_2}(x)=v'$, $g_2(x)=e'$ and $h_2(x)=e$.

Suppose $\mathcal{R}$ is the smallest congruence on $G_{\mathcal{T}}^*$ containing $R^{12}\cup R^{23}\cup R^{13}\cup \Delta_{G_{\mathcal{T}}^1}$, where $\Delta_{G_{\mathcal{T}}^1}$ is the identity relation on $G_{\mathcal{T}}^1$. Since each of $R^{12}$, $R^{23}$, $R^{13}$ and $\Delta_{G_{\mathcal{T}}^1}$ preserves range, source and color, $\mathcal{R}$ is $(r,s,d)$-preserving. By definition it satisfies $(KG1)$ (see Subsection \ref{ssec k-graphs}). Since $\mathcal{R}$ is a congruence, it also satisfies $(KG0)$. Consider the quotient category $\Lambda_{\mathcal{T}}:=G_{\mathcal{T}}^*/\mathcal{R}$. 

The following theorem provides conditions under which $\Lambda_{\mathcal{T}}$ is a $3$-graph.

\begin{thm}\label{th 3D textile to 3-graph}
Suppose $\mathcal{T}$ is a three-dimensional textile system presented by the diagram 
\[
\begin{tikzpicture}[scale=1]
\node[inner sep=2.5pt, circle] (B) at (-1.5,0) {$F_1$};
\node[inner sep=2.5pt, circle] (C) at (1.5,0) {$F_2$.};	
\node[inner sep=2.5pt, circle] (D) at (0,-1.5) {$E$};
\node[inner sep=2.5pt, circle] (A) at (0,1.5) {$G$}; 

\draw[transform canvas={xshift=0.6ex}, ->] (A) -- (B);
\draw[transform canvas={xshift=-0.6ex}, ->] (A) -- (B);

\draw[transform canvas={xshift=0.6ex}, ->] (A) -- (C);
\draw[transform canvas={xshift=-0.6ex}, ->] (A) -- (C);

\draw[transform canvas={xshift=0.6ex}, ->] (C) -- (D);
\draw[transform canvas={xshift=-0.6ex}, ->] (C) -- (D);

\draw[transform canvas={xshift=0.6ex}, ->] (B) -- (D);
\draw[transform canvas={xshift=-0.6ex}, ->] (B) -- (D);

\node at (-0.55,0.55) {$q_1$};
\node at (-0.95,0.95) {$p_1$};

\node at (0.55,0.55) {$q_2$};
\node at (0.95,0.95) {$p_2$};

\node at (-0.55,-0.55) {$h_1$};
\node at (-0.95,-0.95) {$g_1$};

\node at (0.55,-0.55) {$h_2$};
\node at (0.95,-0.95) {$g_2$};
\end{tikzpicture}
\]

Suppose $q_1$ has $s$-path lifting. Then $\Lambda_{\mathcal{T}}$ (defined above) is a $3$-graph if and only if the following conditions hold:

$(i)$ $g_1,g_2$ have unique $r$-path lifting;

$(ii)$ $h_1,h_2$ have unique $s$-path lifting;

$(iii)$ the diagrams 

\[
\begin{tikzpicture}[scale=1]
\node[] (A1) at (0,0) {$G^0$};
\node[] (A2) at (2,0) {$F_1^0$};
\node[] (A3) at (4,0) {$G^0$};
\node[] (A4) at (6,0) {$F_1^0$};

\node[] (B1) at (0,-2) {$F_2^0$};
\node[] (B2) at (2,-2) {$E^0$};
\node[] (B3) at (4,-2) {$F_2^0$};
\node[] (B4) at (6,-2) {$E^0$};

\path[->, >=latex,thick] (A1) edge [] node[]{} (A2);
\path[->, >=latex,thick] (B1) edge [] node[]{} (B2);
\path[->, >=latex,thick] (A3) edge [] node[]{} (A4);
\path[->, >=latex,thick] (B3) edge [] node[]{} (B4);

\path[->, >=latex,thick] (A1) edge [] node[]{} (B1);
\path[->, >=latex,thick] (A2) edge [] node[]{} (B2);
\path[->, >=latex,thick] (A3) edge [] node[]{} (B3);
\path[->, >=latex,thick] (A4) edge [] node[]{} (B4);

\node at (1,0.3) {$q_1^0$};
\node at (5,0.3) {$p_1^0$};
\node at (1,-2.3) {$h_2^0$};
\node at (5,-2.3) {$g_2^0$};

\node at (-0.3,-1) {$p_2^0$};
\node at (2.3,-1) {$g_1^0$};
\node at (3.7,-1) {$q_2^0$};
\node at (6.3,-1) {$h_1^0$};
\end{tikzpicture}
\]
are pullback squares.
\end{thm}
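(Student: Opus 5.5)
The plan is to reduce everything to the criterion of \cite[Theorem 4.5]{HRSW} recalled in Subsection \ref{ssec k-graphs}. By construction $\mathcal{R}$ already satisfies $(KG0)$ and $(KG1)$, so $\Lambda_{\mathcal{T}}$ is a $3$-graph exactly when $(KG2)$ and $(KG3)$ hold. The proof then amounts to translating $(KG2)$ for each colour pair into conditions $(i)$--$(iii)$, and showing that $(KG3)$ is then automatic, given the $s$-path lifting of $q_1$ and Lemma \ref{lem other pullbacks}.

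\emph{Step 1 ($(KG2)\Leftrightarrow(i)$--$(iii)$).} Since $\mathcal{R}$ is generated by length-two relations, I first check that on $G_{\mathcal{T}}^2$ the congruence restricts to the relations $R^{12},R^{23},R^{13}$ together with their symmetric and reflexive closures. Nothing longer can interfere, because the relations preserve length and colour.

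Consider first the $\eb$-$\ec$ colour pair. $(KG2)$ asks that every path $we$, with $w\in F_1^0$, $e\in E^1$ and $h_1^0(w)=s_E(e)$, has a unique $R^{23}$-partner $e'w'$. It also asks that every $e'w'$, with $r_E(e')=g_1^0(w')$, has a unique partner $we$. By injectivity of $s_{F_1}\times r_{F_1}\times g_1^1\times h_1^1$, a partner corresponds to an edge $x\in F_1^1$ with $s_{F_1}(x)=w$ and $h_1(x)=e$. So the first requirement is precisely unique $s$-path lifting of $h_1$, and the second is unique $r$-path lifting of $g_1$. The same argument with $F_2$ handles the $\ea$-$\ec$ colour pair, giving the conditions on $g_2$ and $h_2$.

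For the $\ea$-$\eb$ colour pair, a path $vw$ with $h_2^0(v)=g_1^0(w)$ needs a unique $x\in G^0$ with $p_2^0(x)=v$ and $q_1^0(x)=w$. This is the statement that the first square in $(iii)$ is a pullback in \textbf{Set}; uniqueness of the partner also uses the injectivity of $p_1^0\times q_1^0\times p_2^0\times q_2^0$. The reverse direction, starting from $w'v'$, is the second square. Hence $(KG2)\Leftrightarrow(i)$--$(iii)$, which already gives the ``only if'' direction.

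\emph{Step 2 ($(KG3)$ under $(i)$--$(iii)$).} Assume $(i)$--$(iii)$. Lemma \ref{lem other pullbacks} then supplies the following: unique $s$-path lifting of $q_1$ and $q_2$, unique $r$-path lifting of $p_1$ and $p_2$, and the two pullback squares in $(a)$.

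It suffices to verify $(KG3)$ for one colour ordering, say the path $v w e$ with colours $\ea,\eb,\ec$. The other five orderings are obtained by applying the invertible factorization swaps, since each route is a composite of such swaps.

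Given $vwe$, let $x\in G^0$ be the unique vertex with $p_2^0(x)=v$ and $q_1^0(x)=w$, which exists by $(iii)$. The $s$-path lifting of $q_1$ gives a unique cube $y\in G^1$ with $s_G(y)=x$ and $q_1^1(y)=\tilde w$. Here $\tilde w\in F_1^1$ is the tile realizing the swap $we\to e'w'$.

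I then identify both routes (R1) and (R2) with reading off the faces of $y$. Route (R1) first swaps $we$ using $q_1^1(y)$, then swaps $v e'$ using the face $p_2^1(y)$, and finally swaps the resulting $\ea$-$\eb$ pair using $r_G(y)$. Route (R2) first swaps $vw$ using $s_G(y)=x$, then swaps $w'' e$ using the face $p_1^1(y)$, and then swaps the remaining $\ea$-$\ec$ pair using $q_2^1(y)$. Both routes end with the path $g(\cdot)\,\cdots$ read along the opposite corner of the cube.

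Checking that each intermediate tile really is the stated face of $y$ uses the commutativities $(\mathcal{C})$ together with the uniqueness statements from $(KG2)$. For example, $p_2^1(y)$ has source $v$ and $h_2$-image $g_1^1(q_1^1(y))=e'$, so by unique $s$-lifting of $h_2$ it is the tile used in (R1).

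\emph{Main obstacle.} The hard part is this last bookkeeping: showing that the final tricolored paths of the two routes coincide edge by edge. Equivalently, one must show that the three faces traversed in (R2) meet along the same edges as those in (R1). My approach is to invoke the pullback squares in Lemma \ref{lem other pullbacks}$(a)$. These force the tile produced in the second step of each route to be the corresponding face of the same cube $y$, rather than merely some compatible tile. Once every tile is pinned to a face of $y$, the outputs agree: both equal the edge $h(\cdot)$, $q$-type and $r_G$-type read from $r_G(y)$ and $q_1^1(y)$, $q_2^1(y)$. Hence $(KG3)$ holds, and \cite[Theorem 4.5]{HRSW} yields that $\Lambda_{\mathcal{T}}$ is a $3$-graph.
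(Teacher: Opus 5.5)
Your overall plan matches the paper's. You reduce to $(KG2)$ and $(KG3)$ and match $(KG2)$, colour pair by colour pair, with $(i)$--$(iii)$ using the injectivity conditions in $(\mathcal{I})$. You then check $(KG3)$ by reading both routes off the faces of a cube $y\in G^1$ obtained from the $s$-path lifting of $q_1$.

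You genuinely differ in how you treat the other five colour orderings. The paper uses the five extra pullback squares of Lemma \ref{lem other pullbacks} and repeats the argument for each ordering; you reduce to one ordering. That reduction is valid:
\begin{itemize}
\item by $(KG2)$, each adjacent swap is a bijection between sets of tricoloured paths;
\item the six orderings and these swaps form a hexagon;
\item from any ordering, $(R1)$ and $(R2)$ are the two arcs to the reversed ordering, so $(KG3)$ there says the monodromy around the hexagon based at that ordering is the identity;
\item monodromies at different base points are conjugate by swaps.
\end{itemize}
This buys a lot: with it, Lemma \ref{lem other pullbacks}, including the uniqueness of $y$, is not needed at all. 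For the ``only if'' direction, you should still say why a $3$-graph forces $(KG2)$. This follows from unique factorization of $[\mu]$ for bicoloured $\mu$, together with $(KG1)$. The criterion of \cite{HRSW}, as recalled, only gives sufficiency.

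Two points in Step 2 need fixing.

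First, your description of $(R2)$ has the last two faces in the wrong order. After $vw\to w''v''$ the path is $w''v''e$, and $w''e$ is not a subpath, so the step ``swap $w''e$ using $p_1^1(y)$'' fails as written. The correct second step swaps $v''e$ using $q_2^1(y)$: indeed $s_{F_2}(q_2^1(y))=q_2^0(x)=v''$, and $h_2^1(q_2^1(y))=h_1^1(q_1^1(y))=e$ by inner commutativity. The third step swaps $w''e_1$ using $p_1^1(y)$: indeed $s_{F_1}(p_1^1(y))=p_1^0(x)=w''$, and $h_1^1(p_1^1(y))=g_2^1(q_2^1(y))=e_1$ by cross-commutativity $2$.

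Second, your ``main obstacle'' is misdiagnosed.
\begin{itemize}
\item Each tile is pinned to a face of $y$ by the $(KG2)$ uniqueness you already used in your $p_2^1(y)$ example, not by the pullbacks in Lemma \ref{lem other pullbacks}$(a)$.
\item The final edge-by-edge agreement is not automatic, and your last sentence misstates it: the $\ec$-edge is a $g$-image, not an $h$-image.
\end{itemize}
The agreement consists of three explicit identities:
\begin{itemize}
\item $g_2^1(p_2^1(y))=g_1^1(p_1^1(y))$, by outer commutativity;
\item $p_1^0(r_G(y))=r_{F_1}(p_1^1(y))$, since $p_1$ is a graph homomorphism;
\item $q_2^0(r_G(y))=r_{F_2}(q_2^1(y))$, since $q_2$ is a graph homomorphism.
\end{itemize}
With these corrections the argument is complete.
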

\begin{proof}
Assume that the conditions $(i)$--$(iii)$ hold. Note that, the congruence $\mathcal{R}$ already satisfies $(KG0)$ and $(KG1)$. If we can show that the congruence $\mathcal{R}$ satisfies $(KG2)$ and $(KG3)$ as well, then $\Lambda_{\mathcal{T}}$ will be a $3$-graph (see Subsection \ref{ssec k-graphs}). Let $\mu=\mu_1\mu_2\in G_{\mathcal{T}}^2$ be a bi-colored path. If $d(\mu_1)=\ea$ and $d(\mu_2)=\eb$, then $\mu_1\in F_2^0$ and $\mu_2\in F_1^0$. Since $h_2(\mu_1)=r(\mu_1)=s(\mu_2)=g_1(\mu_2)$ and the first square in $(iii)$ is a pullback square, there exists a unique $x\in G^0$ such that $q_1^0(x)=\mu_2$ and $p_2^0(x)=\mu_1$. Then $\nu:=p_1^0(x)q_2^0(x)\in G_{\mathcal{T}}^2$, $d(p_1^0(x))=\eb$, $d(q_2^0(x))=\ea$ and $\mu\mathcal{R}\nu$. The uniqueness of $\nu$ follows from the uniqueness of $x$. If $d(\mu_1)=\eb$ and $d(\mu_1)=\ea$, then a similar argument using the second pullback square of $(iii)$ gives a unique $\eta=\eta_1\eta_2\in G_{\mathcal{T}}^2$ such that $d(\eta_1)=\ea$, $d(\eta_2)=\eb$ and $\mu\mathcal{R}\eta$. Using the unique path lifting properties of $g_1,g_2,h_1,h_2$ and following similar reasoning, we can uniquely swap the colors of other bi-colored paths and hence $\mathcal{R}$ satisfies $(KG2)$. We now show that $\mathcal{R}$ satisfies $(KG3)$. For this, we consider a tri-colored path $abc\in G_{\mathcal{T}}^3$ with $d(a)=\ea$, $d(b)=\eb$, $d(c)=\ec$, and show that the two paths $c''b''a''$ and $c_2b_2a_2$ with $d(c'')=d(c_2)=\ec$, $d(b'')=d(b_2)=\eb$, $d(a'')=d(a_2)=\ea$, obtained by following the routes
\begin{align*}
(1)~ &abc\longrightarrow ac'b'\longrightarrow c''a'b'\longrightarrow c''b''a'',\\
(2)~ &abc\longrightarrow b_1a_1c\longrightarrow b_1 c_1 a_2\longrightarrow c_2 b_2 a_2;
\end{align*}
are equal. Here $bc R^{23} c'b'$, $ac' R^{13} c''a'$, $a' b' R^{12}b'' a''$ and $ab R^{12} b_1 a_1$, $a_1 c R^{13} c_1 a_2$, $b_1 c_1 R^{23} c_2 b_2$. From the first pullback square of $(iii)$, there exists a unique $\alpha\in G^0$ such that $p_2^0(\alpha)=a$ and $q_1^0(\alpha)=b$. Again, by unique $s$-path lifting of $h_1$, there exists a unique $\beta\in F_1^1$ such that $s_{F_1}(\beta)=b$ and $h_1^1(\beta)=c$. We now apply the $s$-path lifting of $q_1$ to guarantee the existence of an edge $\lambda\in G^1$ such that $s_G(\lambda)=\alpha$ and $q_1^1(\lambda)=\beta$. By Lemma \ref{lem other pullbacks}, this $\lambda$ is unique with this property. We now use this $\lambda$ to express the colored edges $a'',b'',c''$, $a_2,b_2,c_2$ found at the end of the routes $(1)$ and $(2)$. Let us first follow route $(1)$. From the definition of $R^{23}$, it follows that $b'=r_{F_1}(\beta)$ and $c'=g_1^1(\beta)$. Note that, \[s_{F_2}(p_2^1(\lambda))=p_2^0(s_G(\lambda))=p_2^0(\alpha)=a,\] and \[h_2^1(p_2^1(\lambda))=g_1^1(q_1^1(\lambda))=g_1^1(\beta)=c'.\] Since $h_2$ has unique $s$-path lifting, $p_2^1(\lambda)$ is the only path with this property. Using the definition of $R^{13}$, we now have $a'=r_{F_2}(p_2^1(\lambda))$ and \textcolor{blue}{$c''=g_2^1(p_2^1(\lambda))$}. Again, \[q_1^0(r_G(\lambda))=r_{F_1}(q_1^1(\lambda))=r_{F_1}(\beta)=b'\] and \[p_2^0(r_G(\lambda))=r_{F_2}(p_2^1(\lambda))=a'.\] The definition of $R^{12}$ then implies \textcolor{blue}{$b''=p_1^0(r_G(\lambda))$} and \textcolor{blue}{$a''=q_2^0(r_G(\lambda))$}. We now consider the other route. Since $p_2^0(\alpha)=a$, $q_1^0(\alpha)=b$, so $b_1=p_1^0(\alpha)$ and $a_1=q_2^0(\alpha)$. We observe that $c_1=g_2^1(q_2^1(\lambda))$ and \textcolor{blue}{$a_2=r_{F_2}(q_2^1(\lambda))$}. This follows using the definition of $R^{13}$ and noting that \[s_{F_2}(q_2^1(\lambda))=q_2^0(s_G(\lambda))=q_2^0(\alpha)=a_1,\] and \[h_2^1(q_2^1(\lambda))=h_1^1(q_1^1(\lambda))=h_1^1(\beta)=c.\] Again, since \[s_{F_1}(p_1^1(\lambda))=p_1^0(s_G(\lambda))=p_1^0(\alpha)=b_1,\] and \[h_1^1(p_1^1(\lambda))=g_2^1(q_2^1(\lambda))=c_1,\] it follows that \textcolor{blue}{$c_2=g_1^1(p_1^1(\lambda))$} and \textcolor{blue}{$b_2=r_{F_1}(p_1^1(\lambda))$}. Finally, we have, $a''=q_2^0(r_G(\lambda))=r_{F_2}(q_2^1(\lambda))=a_2$, $b''=p_1^0(r_G(\lambda))=r_{F_1}(p_1^1(\lambda))=b_2$ and $c''=g_2^1(p_2^1(\lambda))=g_1^1(p_1^1(\lambda))=c_2$ (For the convenience of tracking, we have highlighted $a'',b'',c''$ and $a_2,b_2,c_2$ above in blue). Thus, $c''b''a''=c_2b_2a_2$ and we are done. By Lemma \ref{lem other pullbacks}, we have five other pullback squares described in $(2)$--$(6)$ of Remark \ref{rem quivalence of the pullbacks}. Using each of these and following the same approach as above, we can show the associativity for tri-colored paths with other five types of color combinations. Therefore, $\mathcal{R}$ satisfies $(KG3)$ and consequently, $\Lambda_{\mathcal{T}}=G_{\mathcal{T}}^*/\mathcal{R}$ is a $3$-graph. 

Conversely, suppose $\Lambda_{\mathcal{T}}$ is a $3$-graph. Let $w\in F_1^0$ and $e\in E^1$ be such that $g_1(w)=r_E(e)$. Then $ew\in \Lambda_{\mathcal{T}}^{\eb+\ec}$. By the unique factorization property, there exists unique $e'\in \Lambda_{\mathcal{T}}^{\ec}=E^1$ and $w'\in \Lambda_{\mathcal{T}}^{\eb}=F_1^0$ such that $ew=w'e'$. This implies $ew R^{23} w'e'$ and so there exists $x\in F_1^1$ such that $g_1^1(x)=e$ and $r_{F_1}(x)=w$. The uniqueness of $x$ follows from the unique factorization. Hence, $g_1$ has $r$-path lifting. Using a similar argument, we can show that $g_2$ has unique $r$-path lifting, $h_1,h_2$ have unique $s$-path lifting and the squares in $(iii)$ are indeed pullback squares. 
\end{proof}

In a three-dimensional textile system, one can easily find two two-dimensional textile systems with a common base graph (see Remark \ref{rem vieweing 2D textile as a 3D textile}). We next provide a certain condition involving two such textile systems, which enables us to build a three-dimensional textile in a natural way. The following construction can be seen as a natural extension of the idea of building an $LR$-textile system using the coordinate graphs of a $2$-graph \cite[Proposition 3.7]{Deaconu} or equivalently, a pair of commuting non-negative integral matrices of same order (for more details, see \cite[\S 9.1]{Matsumoto}).

Recall that in the category \textbf{Dgr}, the \emph{pullback} of the diagram 
\[
\begin{tikzpicture}[scale=1]
\node[] (A) at (0,0) {$F$};	
\node[] (B) at (1.5,0) {$E$};
\node[] (C) at (3,0) {$G$};
	
\path[->,black, >=latex,thick] (A) edge [left] node[above=0.05cm]{} (B);
\path[->,black, >=latex,thick] (C) edge [left] node[above=0.05cm]{} (B);

\node at (0.75,0.3) {$f$};
\node at (2.25,0.3) {$g$};
\end{tikzpicture}
\] 
is given by the directed graph $F{{}_{f}\times_{g}} G$ where \[(F{{}_{f}\times_{g}} G)^0:=\{(u,v)\in F^0\times G^0~|~f^0(u)=g^0(v)\},~~(F{{}_{f}\times_{g}} G)^1:=\{(x,y)\in F^1\times G^1~|~f^1(x)=g^1(y)\},\] \[r((x,y)):=(r_F(x),r_G(y)),~~s((x,y)):=(s_F(x),s_G(y)).\]

Suppose $T_1=(K,E,g_1,h_1)$ and $T_2=(L,E,g_2,h_2)$ are two-dimensional textile systems. Consider the pullback graphs $K{{}_{h_1}\times_{g_2}}L$, $L{{}_{h_2}\times_{g_1}}K$. Suppose we have an isomorphism of directed graphs \[\theta: K{{}_{h_1}\times_{g_2}}L\longrightarrow L{{}_{h_2}\times_{g_1}}K.\] We can use this isomorphism to set up a three-dimensional textile system. Consider the following diagram:
\[
\begin{tikzpicture}[scale=1.3]
\node[] (B) at (-1.5,0) {$K$};
\node[] (C) at (1.5,0) {$L$};	
\node[] (D) at (0,-1.5) {$E$};
\node[] (A) at (0,1.5) {$K{{}_{h_1}\times_{g_2}}L$}; 

\draw[transform canvas={xshift=0.6ex}, ->] (A) -- (B);
\draw[transform canvas={xshift=-0.6ex}, ->] (A) -- (B);

\draw[transform canvas={xshift=0.6ex}, ->] (A) -- (C);
\draw[transform canvas={xshift=-0.6ex}, ->] (A) -- (C);

\draw[transform canvas={xshift=0.6ex}, ->] (C) -- (D);
\draw[transform canvas={xshift=-0.6ex}, ->] (C) -- (D);

\draw[transform canvas={xshift=0.6ex}, ->] (B) -- (D);
\draw[transform canvas={xshift=-0.6ex}, ->] (B) -- (D);

\node at (-0.55,0.55) {$q_1$};
\node at (-0.95,0.95) {$p_1$};

\node at (0.55,0.55) {$q_2$};
\node at (0.95,0.95) {$p_2$};

\node at (-0.55,-0.55) {$h_1$};
\node at (-0.95,-0.95) {$g_1$};

\node at (0.55,-0.55) {$h_2$};
\node at (0.95,-0.95) {$g_2$};
\end{tikzpicture}
\]
where $p_1:=(\pi_1,\pi_1)$, $q_1:=(\pi_2\circ \theta^0, \pi_2\circ \theta^1)$, $p_2:=(\pi_1\circ \theta^0,\pi_1\circ \theta^1)$, $q_2:=(\pi_2,\pi_2)$; $\pi_1,\pi_2$ being projections on the first and the second component, respectively. It is not hard to observe that the above diagram defines a three-dimensional textile if and only if $g_1(x)=g_2(y')$ and $h_2(y)=h_1(x')$ whenever $\theta((x,y))=(y',x')$. We denote the resulting three-dimensional textile system by $\mathcal{T}(T_1,T_2,\theta)$. It is interesting to characterize when this textile system gives a $3$-graph.
\begin{prop}\label{pro 2-textiles giving a 3-graph}
With the notations described as above, $\mathcal{T}(T_1,T_2,\theta)$ satisfies $(i)$, $(ii)$ of Theorem \ref{th 3D textile to 3-graph} if and only if $T_1$, $T_2$ are $LR$-textile systems. In this case, $\Lambda_{\mathcal{T}(T_1,T_2,\theta)}$ is a $3$-graph if and only if $q_1$ has unique $s$-path lifting.     
\end{prop}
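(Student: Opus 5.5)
The first claim is essentially tautological once the components are identified. In $\mathcal{T}(T_1,T_2,\theta)$ the bottom maps are $g_1,h_1:K\to E$ and $g_2,h_2:L\to E$, i.e.\ exactly the component homomorphisms of $T_1$ and $T_2$. Conditions $(i)$ and $(ii)$ of Theorem \ref{th 3D textile to 3-graph} say that $g_1,g_2$ have unique $r$-path lifting and $h_1,h_2$ have unique $s$-path lifting. By the definition of left-resolving, this is precisely the statement that $T_1$ and $T_2$ are $LR$. So for the first equivalence I would only unwind the definitions.

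For the second claim, assume $T_1,T_2$ are $LR$, so $(i)$ and $(ii)$ hold. The first step is to check condition $(iii)$ automatically. The first square has vertex $G^0=(K{}_{h_1}\times_{g_2}L)^0$, top map $q_1^0=\pi_2\circ\theta^0$ to $K^0$, and left map $p_2^0=\pi_1\circ\theta^0$ to $L^0$, over $h_2^0$ and $g_1^0$. Since $\theta^0$ is a bijection onto $(L{}_{h_2}\times_{g_1}K)^0$, which is by definition the set-theoretic pullback of $h_2^0,g_1^0$, this square is a pullback square. The second square has maps $p_1^0=\pi_1$ and $q_2^0=\pi_2$ out of $(K{}_{h_1}\times_{g_2}L)^0$ over $h_1^0,g_2^0$, so it is literally the canonical pullback. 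Hence $(iii)$ always holds.

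The remaining step is to show that $\Lambda_{\mathcal{T}}$ is a $3$-graph if and only if $q_1$ has unique $s$-path lifting.

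For ``if'': unique $s$-path lifting of $q_1$ gives in particular $s$-path lifting. Theorem \ref{th 3D textile to 3-graph} then applies, and $(i)$--$(iii)$ give the $3$-graph.

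For ``only if'': I would not invoke Theorem \ref{th 3D textile to 3-graph}, since it needs $s$-path lifting of $q_1$ as a standing hypothesis. Instead I would argue directly from unique factorization in $\Lambda_{\mathcal{T}}$.
\begin{itemize}
\item[] Take $x\in G^0$ and $\mu\in K^1=F_1^1$ with $q_1^0(x)=s_K(\mu)$.
\item[] Existence: form the tricolored path $p_2^0(x)\,s_K(\mu)\,h_1^1(\mu)$ (colors $\ea,\eb,\ec$). Factor it in the $3$-graph as in route $(2)$ of the proof of Theorem \ref{th 3D textile to 3-graph}: $ab\to b_1a_1$ uses $x$, and $b\,c$ relates via $\mu$.
\item[] Unique factorization of the degree-$(\ea+\eb+\ec)$ element yields edges $\beta\in K^1$ (from the $b_1$-part) and $\gamma\in L^1$ (from the $a_1$-part). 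These satisfy $h_1(\beta)=g_2(\gamma)$, so $(\beta,\gamma)\in G^1$ is an edge with source $x$.
\item[] One then checks, using the fact that $\mathcal{T}$ is a $3$D textile and $\theta^1$ is a bijection compatible with the vertex data, that $q_1^1(\beta,\gamma)=\pi_2\theta^1(\beta,\gamma)=\mu$.
\item[] Uniqueness: this part is easy, because the first part of Lemma \ref{lem other pullbacks} shows that under $(i)$--$(iii)$, mere $s$-path lifting of $q_1$ already forces uniqueness.
\end{itemize}

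The main obstacle is the existence step: identifying the edge of $G$ produced by unique factorization, and verifying that $\theta^1$ sends it to a pair whose second coordinate is $\mu$. I would handle it by comparing the two factorizations of the same degree-$(\ea+\eb+\ec)$ morphism (routes $(1)$ and $(2)$) and tracing each edge as in the blue-highlighted computation. Where this depends on how $\theta$ acts on edges, I would use that $\theta$ is a graph isomorphism, so the endpoints of edges determine the relevant vertex pairs and the $2$D textile injectivity pins down the edges.
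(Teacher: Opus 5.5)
Your outline follows the paper's proof. The first equivalence is definitional. Condition $(iii)$ holds because $\theta^0$ identifies $G^0$ with the set-theoretic pullback of $h_2^0,g_1^0$, and the second square is the canonical pullback. The `if' direction is Theorem \ref{th 3D textile to 3-graph}. For `only if' you use the same path $\lambda:=p_2^0(x)\,s_K(\mu)\,h_1^1(\mu)$ and the same edge $(\beta,\gamma)\in G^1$, read off from $\lambda(0,\eb+\ec)$ and $\lambda(\eb,\ea+\eb+\ec)$, with $s_G(\beta,\gamma)=x$. Uniqueness then comes from Lemma \ref{lem other pullbacks}.

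What is missing is the step you yourself call the main obstacle, $\pi_2\theta^1(\beta,\gamma)=\mu$. You leave it as ``one then checks'' and propose comparing routes $(1)$ and $(2)$ and then using injectivity of $s_K\times r_K\times g_1^1\times h_1^1$. That route would force you to match $r_K$ and $g_1^1$ as well, which needs further lifting arguments, and it is more than is needed. The paper closes the step in two lines:
\begin{itemize}
\item Since $\theta$ is a graph morphism, $s_K(\pi_2\theta^1(\beta,\gamma))=\pi_2\theta^0(s_G(\beta,\gamma))=\pi_2\theta^0(x)=q_1^0(x)=s_K(\mu)$.
\item By inner commutativity $h_1\circ q_1=h_2\circ q_2$, you get $h_1^1(\pi_2\theta^1(\beta,\gamma))=h_2^1(\gamma)=\lambda(\ea+\eb,\ea+\eb+\ec)=h_1^1(\mu)$.
\end{itemize}
Unique $s$-path lifting of $h_1$ then forces equality. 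This uses only source-compatibility of $\theta$, not bijectivity of $\theta^1$ or route $(1)$.

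Incidentally, you do not even need that $\Lambda_{\mathcal{T}}$ is a $3$-graph to produce $(\beta,\gamma)$. Let $\gamma\in L^1$ be the unique edge with $s_L(\gamma)=q_2^0(x)$ and $h_2^1(\gamma)=h_1^1(\mu)$. This exists because $h_2^0(q_2^0(x))=h_1^0(q_1^0(x))=s_E(h_1^1(\mu))$. Then let $\beta\in K^1$ be the unique edge with $s_K(\beta)=p_1^0(x)$ and $h_1^1(\beta)=g_2^1(\gamma)$, which exists because $x\in G^0$. The same two-line check then shows $q_1^1(\beta,\gamma)=\mu$. So when $T_1,T_2$ are $LR$, $q_1$ automatically has $s$-path lifting in $\mathcal{T}(T_1,T_2,\theta)$.
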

\begin{proof}
The first part is straightforward, following the definition of an $LR$-textile system. For the second part, we first observe that $\mathcal{T}(T_1,T_2,\theta)$ satisfies the condition $(iii)$ of Theorem \ref{th 3D textile to 3-graph}. To show that the first diagram there, is a pullback square, suppose we have two maps $\alpha:Q\longrightarrow K^0$, $\beta:Q\longrightarrow L^0$ such that $g_1^0\circ \alpha=h_2^0\circ \beta$. Then $(\beta(x),\alpha(x))\in (L{{}_{h_2}\times_{g_1}}K)^0$. Define a map 
\begin{align*}
    \mathfrak{h}:Q &\longrightarrow (K{{}_{h_1}\times_{g_2}}L)^0\\
    x &\longmapsto \theta^{-1}(\beta(x),\alpha(x)).
\end{align*}
Then $q_1^0(\mathfrak{h}(x))=\alpha(x)$ and $p_2^0(\mathfrak{h}(x))=\beta(x)$ for all $x\in Q$. Since $\theta$ is an isomorphism, it follows that $\mathfrak{h}$ is the unique map with these properties, and we are done. Similarly, we can show that the second square in $(iii)$ of Theorem \ref{th 3D textile to 3-graph} is a pullback. The `if' part of the second statement now follows from Theorem \ref{th 3D textile to 3-graph}. For the converse, assume that $\Lambda_{\mathcal{T}(T_1,T_2,\theta)}$ is a $3$-graph. Let $(u,v)\in (K{{}_{h_1}\times_{g_2}}L)^0$, $e\in K^1$ be such that $q_1^0((u,v))=s_{K}(e)$. Let $\theta^0((u,v))=(v',u')$. Then $u'=s_K(e)$. Note that $uv,v'u'\in \Lambda_{\mathcal{T}(T_1,T_2,\theta)}^{\ea+\eb}$, and the definition of the relation $R^{12}$ on the set of $\ea-\eb$ bi-colored paths, says that $uv=v'u'$ in $\Lambda_{\mathcal{T}(T_1,T_2,\theta)}$. Consider the tri-colored path \[\lambda:=v'u'h_1^1(e)\in \Lambda_{\mathcal{T}(T_1,T_2,\theta)}.\] By the factorization rules of $\Lambda_{\mathcal{T}(T_1,T_2,\theta)}$, $\lambda(0,\eb+\ec)$ (see the definitions of $R^{23}$ and $R^{13}$) corresponds to a unique $x\in K^1$ and $\lambda(\eb,\ea+\eb+\ec)$ corresponds to a unique $y\in L^1$. Now, \[h_1^1(x)=(\lambda(0,\eb+\ec))(\eb,\eb+\ec)=\lambda(\eb,\eb+\ec)=(\lambda(\eb,\ea+\eb+\ec))(0,\ec)=g_2^1(y),\] and so $(x,y)\in (K{{}_{h_1}\times_{g_2}}L)^1$. Note that,
\begin{align*}
s((x,y))&=(s_K(x),s_L(y))\\
&=((\lambda(0,\eb+\ec))(0,\eb),(\lambda(\eb,\ea+\eb+\ec))(0,\ea))\\
&=(\lambda(0,\eb),\lambda(\eb,\ea+\eb))=(u,v).
\end{align*}
Since $\theta$ is a graph morphism, $s(\theta^1((x,y)))=\theta^0(s((x,y)))$. Thus, \[s_K(\pi_2(\theta^1((x,y))))=\pi_2(\theta^0(s((x,y))))=u'=s_K(e)\] and \[h_1^1(\pi_2(\theta^1((x,y))))=h_2^1(y)=(\lambda(\eb,\ea+\eb+\ec))(\ea,\ea+\ec)=\lambda(\ea+\eb,\ea=\eb+\ec)=h_1^1(e).\] Now, since $h_1$ has $s$-path lifting, it follows that $q_1^1((x,y))=\pi_2(\theta^1((x,y)))=e$. Therefore, $q_1$ has $s$-path lifting. That the lifting is unique now follows from Lemma \ref{lem other pullbacks}.
\end{proof}

\section{Homology of a three-dimensional textile system}\label{sec homology of 3D textile}
In \cite{KPW}, the authors introduced the homology and cohomology theory for the usual two-dimensional textile system and relate these to the analogous notions for a $2$-graph. In this concluding section of the paper, we accomplish the following: $(i)$ we obtain a graded analogue of their result connecting homology of textiles and $2$-graphs, and $(ii)$ we define the homology of a three-dimensional textile system and relate this to the homology of the associated $3$-graphs.

Let $T=(F,E,p,q)$ be a two-dimensional textile system. Define new graphs $\hat{F}=(F^0\times \mathbb{Z}^2,F^1\times \mathbb{Z}^2,\hat{r}_F,\hat{s}_F)$ and $\hat{E}=(E^0\times \mathbb{Z}^2.E^1\times \mathbb{Z}^2,\hat{r}_E,\hat{s}_E)$ where \[\hat{r}_F((f,\N)):=(r_F(f),\N+\ea),\hat{s}_F((f,\N)):=(s_F(f),\N),\hat{r}_E((e,\N)):=(r_E(e),\N+\ea),\hat{s}_E((e,\N)):=(s_E(e),\N)\] for all $(f,\N)\in F^1\times \mathbb{Z}^2$ and $(e,\N)\in E^1\times \mathbb{Z}^2$. Also, define homomorphisms $\hat{p},\hat{q}:\hat{F}\longrightarrow \hat{E}$ as follows \[\hat{p}((x,\N)):=(p(x),\N),~~\hat{q}((x,\N)):=(q(x),\N+\eb)\] for all $x\in F^0\sqcup F^1$ and $\N\in \mathbb{Z}^2$. 

\begin{prop}\label{pro the covering textile}
Let $T=(F,E,p,q)$ be any textile system. Then $\hat{T}:=(\hat{F},\hat{E},\hat{p},\hat{q})$ is also a textile system. Moreover $T$ is $LR$ if and only if $\hat{T}$ is $LR$. 
\end{prop}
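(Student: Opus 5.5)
The plan is to verify everything directly from the definitions. First I check that $\hat{F},\hat{E}$ are graphs and that $\hat{p},\hat{q}$ are graph homomorphisms. For $(f,\N)\in F^1\times\mathbb{Z}^2$ we have
\[
\hat{p}(\hat{s}_F(f,\N))=(p(s_F(f)),\N)=(s_E(p(f)),\N)=\hat{s}_E(\hat{p}(f,\N)),
\]
and
\[
\hat{p}(\hat{r}_F(f,\N))=(p(r_F(f)),\N+\ea)=\hat{r}_E(\hat{p}(f,\N)).
\]
For $\hat{q}$ the shift by $\eb$ is applied uniformly to vertices and edges. This gives $\hat{q}(\hat{s}_F(f,\N))=(q(s_F(f)),\N+\eb)=\hat{s}_E(q(f),\N+\eb)$, and the range relation follows in the same way. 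Everything reduces to the fact that $p,q$ are homomorphisms.

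The textile injectivity condition is immediate. Suppose $(f,\N)$ and $(f',\N')$ have the same source, range, $\hat p$-image and $\hat q$-image. Comparing $\hat{s}_F$ already forces $\N=\N'$. The remaining coordinates then give equality of $s_F,r_F,p,q$ on $f,f'$, so $f=f'$ by the injectivity for $T$.

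For the $LR$ equivalence I treat the two liftings separately.

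For unique $r$-path lifting of $\hat p$: take $(v,\N)\in\hat{F}^0$ and $(e,\M)\in\hat{E}^1$ with $\hat{p}(v,\N)=\hat{r}_E(e,\M)$. This means $p(v)=r_E(e)$ and $\N=\M+\ea$. Any lift $(f,\mathbf{k})$ must satisfy $\mathbf{k}=\M$ (from $\hat p$) together with $r_F(f)=v$ and $p(f)=e$. Lifts for $\hat p$ therefore correspond bijectively to lifts for $p$ of the pair $(v,e)$.

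For unique $s$-path lifting of $\hat q$: take $\hat{q}(v,\N)=(q(v),\N+\eb)=\hat{s}_E(e,\M)$. This means $q(v)=s_E(e)$ and $\M=\N+\eb$. A lift $(f,\mathbf{k})$ needs $\mathbf{k}=\N$ from the source condition, and then $q(f)=e$ from the $\hat q$-image. Again this is a bijection with lifts for $q$.

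Conversely, every instance of the lifting problem in $T$ arises from some instance in $\hat T$, for example by taking $\N=0$ and $\M=-\ea$, respectively $\M=\eb$. So existence and uniqueness transfer in both directions, and $T$ is $LR$ if and only if $\hat T$ is $LR$.

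The only point needing care, and the main place an error could occur, is the bookkeeping of the $\mathbb{Z}^2$-coordinates. In particular, the lift's coordinate must be forced uniquely: by $\hat p$ in the $r$-lifting case and by $\hat s_F$ in the $s$-lifting case. The matching conditions $\N=\M+\ea$ and $\M=\N+\eb$ must also be consistent with the shifts built into $\hat r$ and $\hat q$. Once this is checked, the argument is routine.
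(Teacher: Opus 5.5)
Your proposal is correct and follows essentially the same direct verification as the paper: the homomorphism and injectivity checks are identical, and your lifting argument is the paper's. In both, the $\mathbb{Z}^2$-coordinate of a lift is forced by $\hat{p}$ (respectively $\hat{s}_F$), so lifts in $\hat{T}$ correspond to lifts in $T$; for the converse, your choice $\N=0$, $\M=-\ea$ (respectively $\M=\eb$) plays the role of the paper's choice $(v,\ea)$, $(e,0)$.
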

\begin{proof}
That $\hat{p}$ and $\hat{q}$ are graph homomorphisms can easily be verified. For example, if $(f,\N)\in \hat{F}^1=F^1\times \mathbb{Z}^2$, then \[\hat{r}_E((\hat{q}^1((f,\N)))=\hat{r}_E((q^1(f),\N+\eb))=(r_E(q^1(f)),\N+\eb+\ea)=(q^0(r_F(f)),\N+\ea+\eb)=\hat{q}^0(\hat{r}_F((f,\N))).\] Thus, $\hat{r}_E\circ \hat{q}^1=\hat{q}^0\circ \hat{r}_F$. The injectivity of $(\hat{s}_F,\hat{r}_F,\hat{p}^1,\hat{q}^1)$ is evident from the injectivity of $(s_F,r_F,p^1,q^1)$. Hence, $\hat{T}$ is a textile system. Now, suppose $T$ is left-resolving. Choose any $(w,\N)\in F^0\times \mathbb{Z}^2$ and $(e,\M)\in E^1\times \mathbb{Z}^2$ such that $\hat{p}((w,\N))=\hat{r}_E((e,\M))$. This implies $(p(w),\N)=(r_E(e),\M+\ea)$. Since, $p$ has unique $r$-path lifting, there exists a unique $f\in F^1$ such that $p(f)=e$ and $r_F(f)=w$. Then $\hat{p}((f,\M))=(e,\M)$ and $\hat{r}_F((f,\M))=(w,\M+\ea)=(w,\N)$. Obviously, $(f,\M)$ is unique edge in $\hat{F}$ with this property. Therefore, $\hat{p}$ has unique $r$-path lifting. Similarly, it can be shown that $\hat{q}$ has unique $s$-path lifting, which says that $\hat{T}$ is $LR$. Conversely suppose $\hat{T}$ is $LR$. To show that $p$ has unique $r$-path lifting, choose any $v\in F^0$ and $e\in E^1$ such that $p(v)=r_E(e)$. Then $\hat{p}((v,\mathbf{e}_1))=\hat{r}_E((e,0))$. By unique $r$-path lifting of $\hat{p}$, there exists a unique $(f,\N)\in \hat{F}^1$ such that $\hat{p}((f,\N))=(e,0)$ and $\hat{r}_F((f,\N))=(v,\mathbf{e}_1)$. Then $\N=0$ and $f\in F^1$ is the unique edge with $p(f)=e$ and $r_F(f)=v$, showing that $p$ has unique $r$-path lifting. That $q$ has unique $s$-path lifting follows similarly.   
\end{proof}
In view of the above proposition, we now define the \emph{graded homology} of a textile system.
\begin{dfn}\label{def graded homology of a TS}
Suppose $T=(F,E,p,q)$ is any textile system. 

$(i)$ The textile $\hat{T}$ of Proposition \ref{pro the covering textile} is called the \emph{covering textile} of $T$. 

$(ii)$ The \emph{graded $n$-th homology} of $T$ is defined as \[H_n^{\gr}(T):=H_n(\hat{T}).\]    
\end{dfn}
\begin{rmk}\label{rem covering textile and covering graphs}
One can view the top graph $\hat{F}$ and the bottom graph $\hat{E}$ of a covering textile as the usual skew-product or covering graphs (see \cite[Definition 2.1]{KPact}) of $F$ and $E$ stacked along the $y$-axis, or in other words, $\hat{F}$ (resp., $\hat{E}$) is the disjoint union of $|\mathbb{Z}|$ many isomorphic copies of the covering graph $F\times_1 \mathbb{Z}$ (resp., $E\times_1 \mathbb{Z}$). To be precise, for any fixed $j\in \mathbb{Z}$, consider the directed graph $F_j$ with \[F_j^0:=F^0\times (\mathbb{Z}\times \{j\}), F_j^1:=F^1\times (\mathbb{Z}\times \{j\}), r((f,(i,j)):=(r(f),(i+1,j)), s((f,(i,j)):=(s(f),(i,j)).\] Similarly, we have $E_j$. Then is is easy to see that $F_j\cong F\times_1 \mathbb{Z}$ and $E_j\cong E\times_1 \mathbb{Z}$. Moreover, $\hat{F}=\displaystyle{\bigsqcup_{j\in \mathbb{Z}}}~F_j$ and $\hat{E}=\displaystyle{\bigsqcup_{j\in \mathbb{Z}}}~E_j$. The homomorphism $\hat{p}$ can be though exactly as the homomorphism $p:F\longrightarrow E$ doing its work in each level $j$, whereas $\hat{q}$ is the homomorphism $q:F\longrightarrow E$ transferring everything one level up vertically. 
\end{rmk}
The graded homology groups come with a natural action of $\mathbb{Z}^2$. For each $\mathbf{q}\in \mathbb{Z}^2$, if we define $\eta_\mathbf{q}^F:\hat{F}\longrightarrow \hat{F}$ and $\eta_\mathbf{q}^E:\hat{E}\longrightarrow \hat{E}$ simply by $\eta_\mathbf{q}^F((x,\N)):=(x,\N+\mathbf{q})$ and $\eta_\mathbf{q}^E((y,\M)):=(y,\M+\mathbf{q})$ for all $(x,\N)\in \hat{F}^0\sqcup \hat{F}^1$ and $(y,\M)\in \hat{E}^0\sqcup \hat{E}^1$, then it is not hard to observe that the pair $\eta_\mathbf{q}:=(\eta_\mathbf{q}^F,\eta_\mathbf{q}^E)$ is a textile automorphism of $\hat{T}$. Thus, we have a canonical action of $\mathbb{Z}^2$ on $\hat{T}$. This in turn makes each abelian group $C_n(\hat{T})$ a $\mathbb{Z}^2$-module and each homology differential $\partial_n^{\hat{T}}$ a $\mathbb{Z}^2$-module homomorphism. Therefore, for each $n\in \mathbb{N}$, we can view the graded homology $H_n^{\gr}(T)$ as a $\mathbb{Z}^2$-module. This gives rise to a dynamics inside each graded homology group, which was missing in the usual (non-graded) homology groups. 

When $T$ is an $LR$-textile system, $\hat{T}$ is also $LR$ by Proposition \ref{pro the covering textile} and hence, we can talk about the associated $2$-graph $\Lambda_{\hat{T}}$. The next proposition exhibits an interesting connection between $\Lambda_{\hat{T}}$ and $\Lambda_T$. 

\begin{prop}\label{pro covering textile and skew-product graph}
Suppose $T$ is an $LR$-textile system. Then there is a $2$-graph isomorphism between $\Lambda_{\hat{T}}$ and $\overline{\Lambda_T}$, where $\overline{\Lambda_T}$ is the skew-product of $\Lambda_T$ with $\mathbb{Z}^2$. 
\end{prop}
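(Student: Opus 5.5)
The plan is to avoid working with quotient categories directly and to use instead the correspondence between $LR$-textile systems and $2$-graphs recalled in Subsection \ref{ssec textile systems and 2-graphs}: $\Lambda_{T_\Lambda}=\Lambda$ and $T_{\Lambda_T}=T$. Write $\Lambda:=\Lambda_T$ and let $\overline{\Lambda}=\Lambda\times_d\mathbb{Z}^2$ be the skew-product. We use the convention compatible with $\hat{T}$: objects $\Lambda^0\times\mathbb{Z}^2$, morphisms $(\lambda,\N)$ with degree $d(\lambda)$, and the $\mathbb{Z}^2$-label increasing by $d(\lambda)$ along $\lambda$. Composition is
\[(\lambda,\N)(\mu,\N+d(\lambda))=(\lambda\mu,\N).\]
We take as known (standard, \cite{Kumjian-Pask}) that $\overline{\Lambda}$ is a $2$-graph. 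It therefore suffices to exhibit a textile isomorphism $T_{\overline{\Lambda}}\cong\hat{T}$. Then $\Lambda_{\hat{T}}\cong\Lambda_{T_{\overline{\Lambda}}}=\overline{\Lambda}$, because the construction $T\mapsto \Lambda_T$ is clearly functorial for textile isomorphisms: an isomorphism of textiles induces an isomorphism of the $2$-colored graphs $G_T$ that carries the generating relations onto each other.

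The first step is to compute $T_{\overline{\Lambda}}=(F_{\overline{\Lambda}},E_{\overline{\Lambda}},p_{\overline{\Lambda}},q_{\overline{\Lambda}})$ from the definition. Since $\overline{\Lambda}^{\mathbf{m}}=\Lambda^{\mathbf{m}}\times\mathbb{Z}^2$, we get
\[E_{\overline{\Lambda}}=(\Lambda^0\times\mathbb{Z}^2,\Lambda^{\ea}\times\mathbb{Z}^2,\dots),\qquad F_{\overline{\Lambda}}=(\Lambda^{\eb}\times\mathbb{Z}^2,\Lambda^{\ea+\eb}\times\mathbb{Z}^2,\dots).\]
The factorization maps $\ev$ in $\overline{\Lambda}$ act by $(\lambda,\N)(\M,\M')=(\lambda(\M,\M'),\N+\M)$. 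Hence $\ev_{\ea,\ea+\eb}$ shifts the label by $\ea$, $\ev_{\eb,\ea+\eb}$ shifts it by $\eb$, and the initial segments keep it. Using $T_\Lambda=T$, i.e.\ the identifications $\Lambda^0=E^0$, $\Lambda^{\ea}=E^1$, $\Lambda^{\eb}=F^0$, $\Lambda^{\ea+\eb}=F^1$ (with $f$ corresponding to $s_F(f)q(f)=p(f)r_F(f)$), we define $\Theta$ to be the identity on underlying sets, $(x,\N)\mapsto(x,\N)$, on all four sets.

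The second step is to check that $\Theta$ intertwines the structure maps. One must verify the following:
\begin{enumerate}
\item[$\bullet$] the range of an $E$-edge shifts by $\ea$, matching $\hat r_E$;
\item[$\bullet$] for $F$-edges, $\hat r_F$ shifts by $\ea$ and $\hat s_F$ does not;
\item[$\bullet$] $\hat p$ keeps the label, matching $p_{\overline{\Lambda}}=(r,\ev_{0,\ea})$;
\item[$\bullet$] $\hat q$ shifts by $\eb$, matching $q_{\overline{\Lambda}}=(s,\ev_{\eb,\ea+\eb})$. On vertices, the source of $(w,\N)\in\overline{\Lambda}^{\eb}$ is $(s(w),\N+\eb)$, which agrees with $\hat q^0(w,\N)=(q(w),\N+\eb)$.
\end{enumerate}
Each of these reduces to the corresponding identity for $T_\Lambda=T$ together with the label bookkeeping above.

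The main obstacle is precisely this orientation and label bookkeeping. The $2$-graph $\Lambda_T$ uses the convention that paths $s_F(f)q(f)$ are composed left to right along the graph $G_T$. I must make sure the skew-product convention (whether the label sits at the range or at the source) is the one that makes $\hat{r}_E(e,\N)=(r_E(e),\N+\ea)$ correct rather than $\N-\ea$. If the standard convention comes out reversed, I would compose $\Theta$ with $\N\mapsto-\N$, or equivalently use the opposite skew-product convention; both give isomorphic $2$-graphs. Once the textile isomorphism is established, injectivity and bijectivity are immediate since $\Theta$ is the identity on underlying sets. The isomorphism $\Lambda_{\hat T}\cong\overline{\Lambda_T}$ is then degree preserving because it respects colors, which completes the argument.
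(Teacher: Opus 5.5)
Your argument is correct. It is exactly the alternative the paper mentions in the remark right after its proof (``realizing $\hat{T}$ as $T_{\overline{\Lambda_T}}$''), but it is not the route the paper's proof takes.

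\textbf{The paper's route.} The paper stays at the level of the quotient categories. It checks that $\Lambda_{\hat T}$ and $\overline{\Lambda_T}$ have the same $1$-skeleton: the same vertex and edge sets, ranges, sources and colors. It then checks that a relation $(w,\M)(e,\N)=(e',\N')(w',\M')$ holds in one if and only if it holds in the other. It does this by unpacking the generating edge $(f,\mathbf{t})\in\hat F^1$ and reading off $\mathbf{t}=\M=\N'$, $\M'=\mathbf{t}+\ea$ and $\N=\mathbf{t}+\eb$.

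\textbf{What the two routes share.} Both rest on the same underlying fact: a $2$-graph is determined by its $1$-skeleton and its commuting squares. This is also what makes $\Lambda_{T_\Lambda}\cong\Lambda$ true. Both also need $\overline{\Lambda_T}$ to be a $2$-graph.

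\textbf{What each buys.} The paper's version is self-contained and never computes $T_{\overline{\Lambda}}$. Yours replaces the square-by-square comparison with a check of the eight textile structure maps plus the $T\leftrightarrow\Lambda$ dictionary. In exchange you get the sharper textile-level identification $\hat T=T_{\overline{\Lambda_T}}$, and the $LR$ property of $\hat T$ follows from it for free.

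\textbf{The orientation worry.} It does not arise with the convention you fixed. Since $r_E=s_{\Lambda_T}$ on $E^1$, you get $r_{E_{\overline{\Lambda}}}(e,\N)=s_{\overline{\Lambda}}(e,\N)=(s_{\Lambda_T}(e),\N+\ea)=\hat r_E(e,\N)$. The other seven checks go through the same way, so no relabelling is needed. Your fallback was also not quite right: for the opposite skew-product convention, $\N\mapsto-\N$ alone would not repair the mismatch. The correct map there is $(x,\N)\mapsto(x,-\N-d(x))$.
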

\begin{proof}
Since we are dealing with $2$-graphs, it suffices to show that $\Lambda_{\hat{T}}$ and $\overline{\Lambda_T}$ have the same $1$-skeleton and the same factorization rules for bicolored paths. Let $\mathsf{G}$ and $\mathsf{H}$ be the $1$-skeletons of $\Lambda_{\hat{T}}$ and $\overline{\Lambda_T}$ respectively. Then $\mathsf{G}^0=\hat{E}^0=E^0\times \mathbb{Z}^2=\Lambda_T^0\times \mathbb{Z}^2=\mathsf{H}^0$ and \[\mathsf{G}^1=\hat{F}^0\sqcup \hat{E}^1=(F^0\times \mathbb{Z}^2)\sqcup (E^1\times \mathbb{Z}^2)=(F^0\sqcup E^1)\times \mathbb{Z}^2=\overline{\Lambda_T}^\mathbf{1}\times \mathbb{Z}^2=\mathsf{H}^1.\] To show that the adjacency is preserved, let us choose any $x\in \mathsf{G}^1=\mathsf{H}^1$. Then either $x=(e,\N)$ for some $(e,\N)\in E^1\times \mathbb{Z}^2$ or $x=(w,\M)$ for some $(w,\M)\in F^0\times \mathbb{Z}^2$. If $x=(e,\N)$, then \[s_\mathsf{G}(x)=\hat{r}_E((e,\N))=(r_E(e),\N+\ea)=(s_{\Lambda_T}(e),\N+\ea)=s_\mathsf{H}(x)\] since $d(e)=\ea$ in $\Lambda_T$. Also, \[r_\mathsf{G}(x)=\hat{s}_E((e,\N))=(s_E(e),\N)=(r_{\Lambda_T}(e),\N)=r_\mathsf{H}(x).\] On the other hand, if $x=(w,\M)$, then \[s_\mathsf{G}(x)=\hat{q}^0((w,\M))=(q^0(w),\M+\eb)=(s_{\Lambda_T}(w),\M+\eb)=s_\mathsf{H}(x)\] since $d(w)=\eb$ in $\Lambda_T$. Again, \[r_\mathsf{G}(x)=\hat{p}^0((w,\M))=(p^0(w),\M)=(r_{\Lambda_T}(w),\M)=r_\mathsf{H}(x).\] Also note that the colors of the edges of $\mathsf{G}$ and $\mathsf{H}$ agree. Therefore, the $1$-skeletons of $\Lambda_{\hat{T}}$ and $\overline{\Lambda_T}$ are exactly the same. Now, we need to show that their factorization rules are also the same. For this, we take $(w,\M),(w',\M')\in F^0\times \mathbb{Z}^2$ and $(e,\N),(e',\N')\in E^1\times \mathbb{Z}^2$. If $(w,\M)(e,\N)=(e',\N')(w',\M')$ in $\Lambda_{\hat{T}}$, then there is an $(f,\mathbf{t})\in \hat{F}^1=F^1\times \mathbb{Z}^2$ such that $s_{\hat{F}}((f,\mathbf{t}))=(w,\M)$, $r_{\hat{F}}((f,\mathbf{t}))=(w',\M')$, $\hat{q}((f,\mathbf{t}))=(e,\N)$ and $\hat{p}((f,\mathbf{t}))=(e',\N')$. This implies that $s_F(f)=w$, $r_F(f)=w'$, $q^1(f)=e$, $p^1(f)=e'$, $\mathbf{t}=\M=\N'$, $\M'=\mathbf{t}+\ea$ and $\N=\mathbf{t}+\eb$. In view of the first four equalities, we have $we=e'w'$ in $\Lambda_T$. Combining this with the equality $\M=\N'$, we have \[ (w,\M)(e,\N)=(we,\M)=(e'w',\N')=(e',\N')(w',\M')\] in $\overline{\Lambda_T}$. Conversely, if $(w,\M)(e,\N)=(e',\N')(w',\M')$ in $\overline{\Lambda_T}$, then the above argument together with the uniqueness of factorization in $\overline{\Lambda_T}$ implies $(w,\M)(e,\N)=(e',\N')(w',\M')$ in $\Lambda_{\hat{T}}$. This completes the proof.  
\end{proof}
We remark that the above proposition can alternatively be proved by realizing $\hat{T}$ as $T_{\overline{\Lambda_T}}$. Like covering textile, the skew-product of a $2$-graph $\Lambda$ is also equipped with a natural $\mathbb{Z}^2$-action, in fact $\mathbb{Z}^2$ acts freely on $\overline{\Lambda}$ by $^\mathbf{q}(\lambda,\N)=(\lambda,\N+\mathbf{q})$ (see \cite[Remark 5.6]{Kumjian-Pask}). With respect to this action, each member of the chain complex $(C_*(\overline{\Lambda}),\partial_*^{\overline{\Lambda}})$ becomes a $\mathbb{Z}^2$-module and each boundary map is in fact a $\mathbb{Z}^2$-module map. Thus, the $n$-th homology group $H_n(\overline{\Lambda})$ is also a $\mathbb{Z}^2$-module. This simple fact together with \cite[Corollary 5.13]{KPW} and Proposition \ref{pro covering textile and skew-product graph} now gives the following theorem showcasing the connection between textile systems and $2$-graphs in light of graded homology.
\begin{thm}\label{th connecting graded homology with homology of skew-product}
Let $T$ be an $LR$-textile system. Then there is a $\mathbb{Z}^2$-module isomorphism between the graded homology $H_n^{\gr}(T)$ and the usual homology $H_n(\overline{\Lambda_T})$. 
\end{thm}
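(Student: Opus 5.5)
The plan is to combine Proposition \ref{pro covering textile and skew-product graph}, \cite[Corollary 5.13]{KPW}, and a check that the $\mathbb{Z}^2$-actions match. Since $T$ is $LR$, Proposition \ref{pro the covering textile} tells us that $\hat{T}$ is $LR$, so the $2$-graph $\Lambda_{\hat{T}}$ exists.

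\emph{Step 1.} Apply \cite[Corollary 5.13]{KPW} to the $LR$-textile system $\hat{T}$. This gives an isomorphism $H_n(\hat{T})\cong H_n(\Lambda_{\hat{T}})$ for each $n$, and we have $H_n^{\gr}(T)=H_n(\hat{T})$ by definition.

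\emph{Step 2.} Proposition \ref{pro covering textile and skew-product graph} supplies a $2$-graph isomorphism $\Lambda_{\hat{T}}\cong \overline{\Lambda_T}$. It is the identity on vertices $E^0\times\mathbb{Z}^2$ and on edges $(F^0\sqcup E^1)\times\mathbb{Z}^2$. Any degree-preserving isomorphism of $2$-graphs induces an isomorphism of the cubical chain complexes $C_*(\Lambda_{\hat{T}})\cong C_*(\overline{\Lambda_T})$, because the face maps $F^l_t$ are defined only through factorization. Hence it induces an isomorphism $H_n(\Lambda_{\hat{T}})\cong H_n(\overline{\Lambda_T})$.

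\emph{Step 3 (the main point).} Check that the composite isomorphism is $\mathbb{Z}^2$-equivariant. The action on $\hat{T}$ is $\eta_{\mathbf{q}}$, which translates the $\mathbb{Z}^2$-coordinate of every vertex and edge of $\hat{F}$ and $\hat{E}$. The action on $\overline{\Lambda_T}$ is ${}^{\mathbf{q}}(\lambda,\N)=(\lambda,\N+\mathbf{q})$.

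\begin{enumerate}
\item[(a)] On $1$-skeletons, the identification in Step 2 visibly intertwines the two translations. So the isomorphism of $2$-graphs, and hence of chain complexes, is equivariant.
\item[(b)] For Step 1, the map of \cite{KPW} is built naturally from the textile data. It sends chains of $\hat{T}$ (built from $\hat{E}^0$, $\hat{F}^0\sqcup\hat{E}^1$, $\hat{F}^1$) to the corresponding vertices, edges and squares of $\Lambda_{\hat{T}}$. I would argue that it is natural with respect to textile isomorphisms. Then the textile automorphism $\eta_{\mathbf{q}}$ of $\hat{T}$ and the induced $2$-graph automorphism of $\Lambda_{\hat{T}}$ commute with the comparison map.
\item[(c)] If naturality is not stated explicitly in \cite{KPW}, I would verify it directly at chain level. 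The comparison is a bijection of generators compatible with $\eta_{\mathbf{q}}$, since both sides only relabel the $\mathbb{Z}^2$-coordinate.
\end{enumerate}

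Composing the three equivariant isomorphisms gives the claimed $\mathbb{Z}^2$-module isomorphism $H_n^{\gr}(T)\cong H_n(\overline{\Lambda_T})$.

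The expected obstacle is Step 3(b)/(c): confirming that the \cite{KPW} identification is functorial for textile isomorphisms, so that it respects the actions and is not merely an abstract group isomorphism.
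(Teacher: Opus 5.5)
Your proposal is correct and follows essentially the paper's route. The paper also applies \cite[Corollary 5.13]{KPW} to the $LR$-textile $\hat{T}$, transports along the $2$-graph isomorphism $\Lambda_{\hat{T}}\cong\overline{\Lambda_T}$ of Proposition \ref{pro covering textile and skew-product graph}, and uses the translation actions $\eta_{\mathbf{q}}$ and ${}^{\mathbf{q}}(\lambda,\N)=(\lambda,\N+\mathbf{q})$; it treats the equivariance of the composite as evident, and your chain-level check in Step 3 simply makes that step explicit.
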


We now define homology for a three-dimensional textile system. In what follows, $\mathcal{T}$ denotes a three-dimensional textile presented by the following diagram
\[
\begin{tikzpicture}[scale=1]
\node[inner sep=2.5pt, circle] (B) at (-1.5,0) {$F_1$};
\node[inner sep=2.5pt, circle] (C) at (1.5,0) {$F_2$.};	
\node[inner sep=2.5pt, circle] (D) at (0,-1.5) {$E$};
\node[inner sep=2.5pt, circle] (A) at (0,1.5) {$G$}; 

\draw[transform canvas={xshift=0.6ex}, ->] (A) -- (B);
\draw[transform canvas={xshift=-0.6ex}, ->] (A) -- (B);

\draw[transform canvas={xshift=0.6ex}, ->] (A) -- (C);
\draw[transform canvas={xshift=-0.6ex}, ->] (A) -- (C);

\draw[transform canvas={xshift=0.6ex}, ->] (C) -- (D);
\draw[transform canvas={xshift=-0.6ex}, ->] (C) -- (D);

\draw[transform canvas={xshift=0.6ex}, ->] (B) -- (D);
\draw[transform canvas={xshift=-0.6ex}, ->] (B) -- (D);

\node at (-0.55,0.55) {$q_1$};
\node at (-0.95,0.95) {$p_1$};

\node at (0.55,0.55) {$q_2$};
\node at (0.95,0.95) {$p_2$};

\node at (-0.55,-0.55) {$h_1$};
\node at (-0.95,-0.95) {$g_1$};

\node at (0.55,-0.55) {$h_2$};
\node at (0.95,-0.95) {$g_2$};
\end{tikzpicture}
\]
We first define certain free abelian groups as follows
\begin{align*}
C_0(\mathcal{T}):=&~\mathbb{Z}E^0;\\
C_1(\mT):=&~\mathbb{Z}(F_1^0\sqcup f_2^0\sqcup E^1);\\
C_2(\mT):=&~\mathbb{Z}(G^0\sqcup F_1^1\sqcup F_2^1);\\
C_3(\mT):=&~\mathbb{Z}G^1.
\end{align*}
We set $C_n(\mT):=0$ for all $n\ge 4$. Now, we define the \emph{homology differentials} or the \emph{boundary maps} as the group homomorphisms $\partial_i^\mT:C_i(\mT)\longrightarrow C_{i-1}(\mT)$ for $i\ge 1$, where 
\begin{align*}
\partial_1^\mT(x):=
	\left\{
	\begin{array}{lll}
		(h_1^0-g_1^0)(x)  & \mbox{if } x\in F_1^0,\\
		(h_2^0-g_2^0)(x)  & \mbox{if } x\in F_2^0,\\
        (r_E-s_E)(x)  & \mbox{if } x\in E^1;
	\end{array}
	\right.
\end{align*}
\[
\partial_2^\mT(x):=
	\left\{
	\begin{array}{lll}
		(p_2^0+q_1^0-p_1^0-q_2^0)(x)  & \mbox{if } x\in G^0,\\
		(s_{F_1}+h_1^1-g_1^1-r_{F_1})(x)  & \mbox{if } x\in F_1^1,\\
        (s_{F_2}+h_2^1-g_2^1-r_{F_2})(x)  & \mbox{if } x\in F_2^1;
	\end{array}
	\right.
\]
\[
\partial_3^\mT:=r_G+p_2^1+q_1^1-s_G-p_1^1-q_2^1
\]
and for all $i\ge 4$, $\partial_i^\mT=\theta$ (the zero homomorphism) for obvious reason. 
\begin{lem}\label{lem chain complex}
Let $\mT$ be any three-dimensional textile system. Then the sequence of abelian groups and homomorphisms \[0\overset{\partial_0^\mT=\theta}{\longleftarrow} C_0(\mT)\overset{\partial_1^\mT}{\longleftarrow} C_1(\mT)\overset{\partial_2^\mT}{\longleftarrow} C_2(\mT)\overset{\partial_3^\mT}{\longleftarrow}C_3(\mT)\overset{\theta}{\longleftarrow} 0\] forms a chain complex. 
\end{lem}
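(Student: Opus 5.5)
We must check $\partial_1^\mT\circ\partial_2^\mT=0$ and $\partial_2^\mT\circ\partial_3^\mT=0$; the remaining composites involve a zero map and vanish trivially. Since all groups are free abelian and all maps are defined on generators, it suffices to evaluate the composites on each basis element. The only inputs are the four commutativities in $(\mathcal{C})$ of Definition \ref{def 3D textile} and the fact that every arrow in the diagram is a graph homomorphism. The injectivity conditions $(\mathcal{I})$ play no role.

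For $\partial_1^\mT\circ\partial_2^\mT$ there are three kinds of generators.

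\emph{Generator $x\in G^0$.} Applying $\partial_1^\mT$ to $p_2^0(x)+q_1^0(x)-p_1^0(x)-q_2^0(x)$ gives
\[(h_2^0p_2^0-g_2^0p_2^0)+(h_1^0q_1^0-g_1^0q_1^0)-(h_1^0p_1^0-g_1^0p_1^0)-(h_2^0q_2^0-g_2^0q_2^0)\]
evaluated at $x$. The terms cancel in pairs:
\begin{itemize}
\item outer-commutativity gives $g_1^0p_1^0=g_2^0p_2^0$;
\item inner-commutativity gives $h_1^0q_1^0=h_2^0q_2^0$;
\item cross-commutativity 1 gives $g_1^0q_1^0=h_2^0p_2^0$;
\item cross-commutativity 2 gives $h_1^0p_1^0=g_2^0q_2^0$.
\end{itemize}

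\emph{Generator $x\in F_1^1$.} Applying $\partial_1^\mT$ to $s_{F_1}(x)+h_1^1(x)-g_1^1(x)-r_{F_1}(x)$ gives
\[(h_1^0s_{F_1}-g_1^0s_{F_1})+(r_Eh_1^1-s_Eh_1^1)-(r_Eg_1^1-s_Eg_1^1)-(h_1^0r_{F_1}-g_1^0r_{F_1}).\]
Because $g_1$ and $h_1$ are graph homomorphisms, we have $s_Eh_1^1=h_1^0s_{F_1}$, $r_Eh_1^1=h_1^0r_{F_1}$, and similarly for $g_1$. Hence everything cancels. The case $x\in F_2^1$ is identical with the indices changed.

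For $\partial_2^\mT\circ\partial_3^\mT$, take $x\in G^1$ and expand
\[\partial_2^\mT\bigl(r_G(x)+p_2^1(x)+q_1^1(x)-s_G(x)-p_1^1(x)-q_2^1(x)\bigr).\]
The two $G^0$-terms $r_G(x)$ and $-s_G(x)$ each contribute four $F_i^0$-terms, and each of the four edge terms contributes four more: two vertex terms in $F_i^0$ and two $E^1$-terms. This gives $24$ terms in total, which we sort by where they live.

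\emph{$F_1^0$- and $F_2^0$-components.} We have $p_2^0r_G=r_{F_2}p_2^1$, $p_2^0s_G=s_{F_2}p_2^1$, and similarly for $p_1,q_1,q_2$, since these are graph homomorphisms. So each of the eight vertex terms coming from $\pm r_G(x)$ and $\mp s_G(x)$ cancels against a range or source term from the edge part. For example, $+p_2^0r_G(x)$ from $r_G(x)$ cancels $-r_{F_2}(p_2^1(x))$ coming from $p_2^1(x)$.

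\emph{$E^1$-component.} The surviving $E^1$-terms are
\[h_2^1p_2^1-g_2^1p_2^1+h_1^1q_1^1-g_1^1q_1^1-h_1^1p_1^1+g_1^1p_1^1-h_2^1q_2^1+g_2^1q_2^1.\]
These cancel in pairs by the level-$1$ versions of the four commutativities: $g_1^1p_1^1=g_2^1p_2^1$, $h_1^1q_1^1=h_2^1q_2^1$, $g_1^1q_1^1=h_2^1p_2^1$, and $h_1^1p_1^1=g_2^1q_2^1$.

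The main obstacle is purely bookkeeping: keeping the signs straight in these $24$ terms and confirming that each vertex term from $\pm r_G$ and $\mp s_G$ carries the opposite sign to its partner from the edge part. I would organise this by checking it separately for each of $p_1,q_1,p_2,q_2$.
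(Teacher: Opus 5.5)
Your proposal is correct and takes essentially the same route as the paper: evaluate $\partial_1^\mT\circ\partial_2^\mT$ and $\partial_2^\mT\circ\partial_3^\mT$ on generators, then cancel the resulting terms in pairs using the graph-homomorphism identities $r\circ f^1=f^0\circ r$, $s\circ f^1=f^0\circ s$ and the level-$0$ and level-$1$ forms of the four commutativities in $(\mathcal{C})$. Your bookkeeping of the $24$ terms matches the paper's regrouping, and you also write out the $F_1^1\sqcup F_2^1$ case of $\partial_1^\mT\circ\partial_2^\mT=0$, which the paper only dismisses with ``similarly''.
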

\begin{proof}
We need to show that $\image(\partial_i^\mT)\subseteq \Ker(\partial_{i-1}^\mT)$ for all $i\ge 1$. It suffices to show that $\image(\partial_3^\mT)\subseteq \Ker(\partial_2^\mT)$ and $\image(\partial_2^\mT)\subseteq \Ker(\partial_1^\mT)$ as the other implications are obvious. Let $x\in G^1$. Then 
\allowdisplaybreaks
\begin{align*}
&~\partial_2^\mT(\partial_3^\mT(x))\\
=&~\partial_2^\mT(r_G(x)+p_2^1(x)+q_1^1(x)-s_G(x)-p_1^1(x)-q_2^1(x))\\
=& \left(p_2^0(r_G(x))+q_1^0(r_G(x))-p_1^0(r_G(x))-q_2^0(r_G(x))\right)+\left(s_{F_2}(p_2^1(x))+h_2^1(p_2^1(x))-g_2^1(p_2^1(x))-r_{F_2}(p_2^1(x))\right)+\\
&\left(s_{F_1}(q_1^1(x))+h_1^1(q_1^1(x))-g_1^1(q_1^1(x))-r_{F_1}(q_1^1(x))\right)-\left(p_2^0(s_G(x))+q_1^0(s_G(x))-p_1^0(s_G(x))-q_2^0(s_G(x))\right)-\\
&\left(s_{F_1}(p_1^1(x))+h_1^1(p_1^1(x))-g_1^1(p_1^1(x))-r_{F_1}(p_1^1(x))\right)-\left(s_{F_2}(q_2^1(x))+h_2^1(q_2^1(x))-g_2^1(q_2^1(x))-r_{F_2}(q_2^1(x))\right)\\
=& \left(p_2^0(r_G(x))-r_{F_2}(p_2^1(x))\right)+\left(q_1^0(r_G(x))-r_{F_1}(q_1^1(x))\right)+\left(r_{F_1}(p_1^1(x))-p_1^0(r_G(x))\right)+\left(r_{F_2}(q_2^1(x))-q_2^0(r_G(x))\right)\\
&+\left(s_{F_2}(p_2^1(x))-p_2^0(s_G(x))\right)+\left(h_2^1(p_2^1(x))-g_1^1(q_1^1(x))\right)+\left(g_1^1(p_1^1(x))-g_2^1(p_2^1(x))\right)+\left(s_{F_1}(q_1^1(x))-q_1^0(s_G(x))\right)\\
&+\left(h_1^1(q_1^1(x))-h_2^1(q_2^1(x))\right)+\left(p_1^0(s_G(x))-s_{F_1}(p_1^1(x))\right)+\left(q_2^0(s_G(x))-s_{F_2}(q_2^1(x))\right)+\left(h_1^1(p_1^1(x))-g_2^1(q_2^1(x))\right)\\
=&~ 0
\end{align*}
using the defining conditions of a three-dimensional textile. Hence, $\image(\partial_3^\mT)\subseteq \Ker(\partial_2^\mT)$. Now, for any $x\in G^0$, 
\begin{align*}
&~\partial_1^\mT(\partial_2^\mT(x))\\
=&~\partial_1^\mT(p_2^0(x)+q_1^0(x)-p_1^0(x)-q_2^0(x))\\
=&\left(h_2^0(p_2^0(x))-g_2^0(p_2^0(x))\right)+\left(h_1^0(q_1^0(x))-g_1^0(q_1^0(x))\right)-\left(h_1^0(p_1^0(x))-g_1^0(p_1^0(x))\right)-\left(h_2^0(q_2^0(x))-g_2^0(q_2^0(x))\right)\\
=&\left(h_2^0(p_2^0(x))-g_1^0(q_1^0(x))\right)+\left(g_1^0(p_1^0(x))-g_2^0(p_2^0(x))\right)+\left(h_1^0(q_1^0(x))-h_2^0(q_2^0(x))\right)+\left(g_2^0(q_2^0(x))-h_1^0(p_1^0(x))\right)\\
=&~0.
\end{align*}
Similarly, one can check that $\partial_1^\mT(\partial_2^\mT(x))=0$ for all $x\in F_1^1\sqcup F_2^1$. Therefore, $\image(\partial_2^\mT)\subseteq \Ker(\partial_1^\mT)$. Hence the lemma. 
\end{proof}
In view of the above lemma, we can now define the homology of a three-dimensional textile. 
\begin{dfn}\label{def homology of 3D textile}
Suppose $\mT$ is any three-dimensional textile system. The chain complex of Lemma \ref{lem chain complex} is denoted by $(C_*(\mT),\partial_*^\mT)$. The \emph{$n$-th integral homology} of $\mT$ is defined as \[H_n(\mT):=\Ker(\partial_n^\mT)/\image(\partial_{n+1}^\mT)\] for all $n\in \mathbb{N}$. 
\end{dfn}
We now state the main result of this section. 
\begin{thm}\label{th the homologies are isomorphic}
Suppose $\mathcal{T}$ is a three-dimensional textile system presented by the diagram 
\[
\begin{tikzpicture}[scale=1]
\node[inner sep=2.5pt, circle] (B) at (-1.5,0) {$F_1$};
\node[inner sep=2.5pt, circle] (C) at (1.5,0) {$F_2$.};	
\node[inner sep=2.5pt, circle] (D) at (0,-1.5) {$E$};
\node[inner sep=2.5pt, circle] (A) at (0,1.5) {$G$}; 

\draw[transform canvas={xshift=0.6ex}, ->] (A) -- (B);
\draw[transform canvas={xshift=-0.6ex}, ->] (A) -- (B);

\draw[transform canvas={xshift=0.6ex}, ->] (A) -- (C);
\draw[transform canvas={xshift=-0.6ex}, ->] (A) -- (C);

\draw[transform canvas={xshift=0.6ex}, ->] (C) -- (D);
\draw[transform canvas={xshift=-0.6ex}, ->] (C) -- (D);

\draw[transform canvas={xshift=0.6ex}, ->] (B) -- (D);
\draw[transform canvas={xshift=-0.6ex}, ->] (B) -- (D);

\node at (-0.55,0.55) {$q_1$};
\node at (-0.95,0.95) {$p_1$};

\node at (0.55,0.55) {$q_2$};
\node at (0.95,0.95) {$p_2$};

\node at (-0.55,-0.55) {$h_1$};
\node at (-0.95,-0.95) {$g_1$};

\node at (0.55,-0.55) {$h_2$};
\node at (0.95,-0.95) {$g_2$};
\end{tikzpicture}
\]

Suppose $q_1$ has $s$-path lifting and the conditions $(i)$--$(iii)$ of Theorem \ref{th 3D textile to 3-graph} are satisfied. Let $\Lambda_\mT$ be the associated $3$-graph. Then the chain complex $(C_*(\mT),\partial_*^\mT)$ is isomorphic to the chain complex $(C_*(\Lambda_\mT),\partial_*^{\Lambda_\mT})$. Consequently, $H_n(\mT)\cong H_n(\Lambda_\mT)$ for all $n\in \mathbb{N}$.
\end{thm}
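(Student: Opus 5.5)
We will construct explicit bijections between the generating sets of $C_n(\mT)$ and $C_n(\Lambda_\mT)$ in each degree $n=0,1,2,3$. We then check that the induced group isomorphisms commute with the boundary maps, up to a uniform sign convention in each degree. In degrees $0$ and $1$ this is immediate from the construction: $\Lambda_\mT^0=E^0$ and $\Lambda_\mT^{\ea}\sqcup\Lambda_\mT^{\eb}\sqcup\Lambda_\mT^{\ec}=F_2^0\sqcup F_1^0\sqcup E^1$, since $\mathcal{R}$ satisfies $(KG1)$.

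\textbf{Degree 2.} Every element of $\Lambda_\mT^{\ea+\eb}$ has a unique representative $vw$ with $v\in F_2^0$, $w\in F_1^0$. By the first pullback square in $(iii)$, such composable pairs are in bijection with $G^0$ via $x\mapsto p_2^0(x)q_1^0(x)$. By $R^{12}$ we also have $p_2^0(x)q_1^0(x)=p_1^0(x)q_2^0(x)$ in $\Lambda_\mT$. Similarly, unique $s$-path lifting of $h_1$ (resp.\ $h_2$) identifies $\Lambda_\mT^{\eb+\ec}$ with $F_1^1$ via $x\mapsto s_{F_1}(x)h_1^1(x)=g_1^1(x)r_{F_1}(x)$, and $\Lambda_\mT^{\ea+\ec}$ with $F_2^1$ in the same way. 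This gives bijections $Q_2(\Lambda_\mT)\leftrightarrow G^0\sqcup F_1^1\sqcup F_2^1$.

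\textbf{Degree 3.} Every element of $\Lambda_\mT^{\ea+\eb+\ec}$ is represented by a unique tricolored path $abc$ with $a\in F_2^0$, $b\in F_1^0$, $c\in E^1$. As in the proof of Theorem \ref{th 3D textile to 3-graph}, such a path determines a unique $\alpha\in G^0$ and a unique $\beta\in F_1^1$. Then the $s$-path lifting of $q_1$, made unique by Lemma \ref{lem other pullbacks}, gives a unique $\lambda\in G^1$ with $s_G(\lambda)=\alpha$ and $q_1^1(\lambda)=\beta$. Conversely, each $\lambda\in G^1$ yields the path $p_2^0(s_G\lambda)\,q_1^0(s_G\lambda)\,h_1^1(q_1^1\lambda)$. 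These two assignments are mutually inverse, so $Q_3(\Lambda_\mT)\leftrightarrow G^1$.

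\textbf{Compatibility with boundaries.} The main work is identifying the faces $F_t^l$ under these bijections. For $\lambda\in G^1$ with corresponding $\mu\in\Lambda_\mT^{\mathbf 1}$, the computation in the proof of Theorem \ref{th 3D textile to 3-graph} already records all three factorizations of $\mu$. From these one reads off the faces:
\begin{itemize}
\item $\mu(0,\ea+\eb)=s_G(\lambda)$ and $\mu(\ec,\mathbf 1)=r_G(\lambda)$;
\item $\mu(0,\eb+\ec)=p_1^1(\lambda)$ and $\mu(\ea,\mathbf 1)=q_1^1(\lambda)$;
\item $\mu(0,\ea+\ec)=p_2^1(\lambda)$ and $\mu(\eb,\mathbf 1)=q_2^1(\lambda)$.
\end{itemize}
For example, $\mu(\ea,\mathbf 1)=bc=\beta=q_1^1(\lambda)$. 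Substituting these into the formula for $\partial_3^{\Lambda}$ gives $\partial_3^\mT$ up to an overall sign. In degree $2$ the analogous identifications are $F^0_1(x)=p_2^0(x)$ and $F^1_1(x)=q_2^0(x)$ for $x\in G^0$, with similar formulas for $F_1^1$ and $F_2^1$.

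\textbf{Main obstacle: sign bookkeeping.} The sign conventions of \cite{HomKPS} (the formula uses $(-1)^{t+l}$) must be matched with the signs in $\partial_n^\mT$. If the signs agree only up to a degree-dependent factor $\pm1$ on each generator type, we will compose the bijection with the diagonal automorphism multiplying degree-$n$ generators by $(-1)^{\epsilon_n}$, choosing $\epsilon_n$ so that all squares commute.

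With the chain isomorphism in hand, the isomorphism $H_n(\mT)\cong H_n(\Lambda_\mT)$ follows immediately, and both complexes vanish for $n\ge 4$.
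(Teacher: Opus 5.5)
Your proposal is correct and is essentially the paper's proof: the same bijections $x\mapsto p_2^0(x)q_1^0(x)$ and $x\mapsto s_{F_i}(x)h_i^1(x)$ in degree $2$ and $\lambda\mapsto p_2^0(s_G\lambda)\,q_1^0(s_G\lambda)\,h_1^1(q_1^1\lambda)$ in degree $3$ (bijective by the pullback squares, the unique path liftings of $h_1,h_2$, and the unique $s$-path lifting of $q_1$), followed by the same face identifications to check $\partial_3$. No sign twist is actually needed: since $r_{\Lambda_\mT}=s_{G_\mT}$ and $F_t^l$ omits the colour $\mathbf{e}_{i_t}$, one gets $F_1^0(x)=p_1^0(x)$, $F_1^1(x)=q_1^0(x)$, $F_2^0(x)=p_2^0(x)$ and $F_2^1(x)=q_2^0(x)$ for $x\in G^0$ (not $F_1^0(x)=p_2^0(x)$ as you wrote), and with these identifications every square of the chain map commutes on the nose.
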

\begin{proof}
Note that \[C_0(\Lambda_\mT)=\mathbb{Z}\Lambda_\mT^0=\mathbb{Z}E^0=C_0(\mT)\] and \[C_1(\Lambda_\mT)=\mathbb{Z}(\Lambda_\mT^{\mathbf{e}_1}\sqcup \Lambda_\mT^{\mathbf{e}_2}\sqcup \Lambda_\mT^{\mathbf{e}_3})=\mathbb{Z}(F_1^0\sqcup F_2^0\sqcup E^1)=C_1(\mT).\] Now, consider the maps $\phi_{12}:G^0\longrightarrow \Lambda_\mT^{\mathbf{e}_1+\mathbf{e}_2}$, $\phi_{23}:F_1^1\longrightarrow \Lambda_\mT^{\mathbf{e}_2+\mathbf{e}_3}$ and $\phi_{13}:F_2^1\longrightarrow \Lambda_\mT^{\mathbf{e}_1+\mathbf{e}_3}$ defined by \[\phi_{12}(x):=p_2^0(x)q_1^0(x),~~\phi_{23}(x):=s_{F_1}(x)h_1^1(x),~~\phi_{13}(x):=s_{F_2}(x)h_2^1(x).\] In view of the conditions $(i)$--$(iii)$, it is easy to observe that the above three maps are bijections and thus give rise to group isomorphisms $\Tilde{\phi}_{12}$, $\Tilde{\phi}_{23}$ and $\Tilde{\phi}_{13}$ between the respective free abelian groups. Now, we define $\Phi:C_2(\mT)\longrightarrow C_2(\Lambda_\mT)$ as $\Phi=\Tilde{\phi}_{12}\oplus \Tilde{\phi}_{23} \oplus \Tilde{\phi}_{13}$. Then $\Phi$ is clearly an isomorphism. Define 
\begin{align*}
\psi:G^1 &\longrightarrow \Lambda_\mT^{\mathbf{e}_1+\mathbf{e}_2+\mathbf{e}_3}\\
x &\longmapsto p_2^0(s_G(x))q_1^0(s_G(x))h_1^1(q_1^1(x))=p_2^0(s_G(x))s_{F_1}(q_1^1(x))h_1^1(q_1^1(x)).
\end{align*}
The map is well-defined by the definition of $\Lambda_\mT$. Let $\gamma\in \Lambda_\mT^{\mathbf{e}_1+\mathbf{e}_2+\mathbf{e}_3}$. Suppose $y:=\phi_{12}^{-1}(\gamma(0,\mathbf{e}_1+\mathbf{e}_2))$ and $z:=\phi_{23}^{-1}(\gamma(\mathbf{e}_1,\mathbf{e}_1+\mathbf{e}_2+\mathbf{e}_3))$. Then \[q_1^0(y)=\gamma(\mathbf{e}_1,\mathbf{e}_1+\mathbf{e}_2)=s_{F_1}(z)\] and so by the $s$-path lifting of $q_1$. there exists $x\in G^1$ with $q_1^1(x)=z$ and $s_G(x)=y$. It follows that $\psi(x)=p_2^0(s_G(x))q_1^0(s_G(x))h_1^1(q_1^1(x))=p_2^0(y)q_1^0(y)h_1^1(z)=\gamma(0,\mathbf{e}_1)\gamma(\mathbf{e}_1,\mathbf{e}_1+\mathbf{e}_2)\gamma(\mathbf{e}_1+\mathbf{e}_2,\mathbf{e}_1+\mathbf{e}_2+\mathbf{e}_3)=\gamma$. This shows that $\psi$ is surjective. For injectivity, note that if $\psi(x)=\psi(y)$, then \[s_G(x)=\phi_{12}^{-1}(\psi(x)(0,\mathbf{e}_1+\mathbf{e}_2))=\phi_{12}^{-1}(\psi(y)(0,\mathbf{e}_1+\mathbf{e}_2))=s_G(y)\] and \[q_1^1(x)=\phi_{23}^{-1}(\psi(x)(\mathbf{e}_1,\mathbf{e}_1+\mathbf{e}_2+\mathbf{e}_3))=\phi_{23}^{-1}(\psi(y)(\mathbf{e}_1,\mathbf{e}_1+\mathbf{e}_2+\mathbf{e}_3))=q_1^1(y).\] The uniqueness of the $s$-path lifting of $q_1$ now implies, $x=y$. Thus, the map $\psi$ is bijective and induces an isomorphism $\Psi:C_3(\mT)\longrightarrow C_3(\Lambda_\mT)$. We will be done if we can show that the following diagram of chain complexes commute.
\[
\begin{tikzcd}
0 & C_0(\mathcal{T}) \arrow[l] \arrow[d, equal] & C_1(\mathcal{T}) \arrow[l, "\partial_1^\mT"'] \arrow[d, equal] & C_2(\mathcal{T}) \arrow[l, "\partial_2^\mT"'] \arrow[d, "\Phi"] & C_3(\mathcal{T}) \arrow[l, "\partial_3^\mT"'] \arrow[d, "\Psi"] & 0 \arrow[l] \\
0 & C_0(\Lambda_{\mathcal{T}}) \arrow[l]         & C_1(\Lambda_{\mathcal{T}}) \arrow[l, "\partial_1^{\Lambda_{\mathcal{T}}}"'] & C_2(\Lambda_{\mathcal{T}}) \arrow[l, "\partial_2^{\Lambda_{\mathcal{T}}}"'] & C_3(\Lambda_{\mathcal{T}}) \arrow[l, "\partial_3^{\Lambda_{\mathcal{T}}}"'] & 0 \arrow[l]
\end{tikzcd}
\]
We only check that $\partial_3^{\Lambda_\mT}\circ \Psi=\Phi\circ \partial_3^\mT$ and leave the other verifications to the reader. Let $x\in G^1$. Then 
\begin{align*}
\partial_3^{\Lambda_\mT}(\Psi(x))=&\displaystyle{\sum_{l=0}^{1}}\left(\displaystyle{\sum_{i=1}^{3}}(-1)^{i+l}F_i^l(\psi(x))\right)\\
=&-\psi(x)(0,\eb+\ec)+\psi(x)(0,\ea+\ec)-\psi(x)(0,\ea+\eb)\\
&+\psi(x)(\ea,\ea+\eb+\ec)-\psi(x)(\eb,\ea+\eb+\ec)+\psi(x)(\ec,\ea+\eb+\ec)\\
=&-p_1^0(s_G(x))g_2^1(q_2^1(x))+p_2^0(s_G(x))g_1^1(q_1^1(x))-p_2^0(s_G(x))q_1^0(s_G(x))\\
&+q_1^0(s_G(x))h_1^1(q_1^1(x))-q_2^0(s_G(x))h_1^1(q_1^1(x))+p_2^0(r_G(x))q_1^0(r_G(x))\\
=&~ \phi_{12}(r_G(x))+\phi_{23}(q_1^1(x))+\phi_{13}(p_2^1(x))-\phi_{12}(s_G(x))-\phi_{23}(p_1^1(x))-\phi_{13}(q_2^1(x))\\
=&~\Phi(r_G(x)+q_1^1(x)+p_2^1(x)-s_G(x)-p_1^1(x)-q_2^1(x))=\Phi(\partial_3^\mT(x)).
\end{align*}
\end{proof}

\textbf{Acknowledgments:} The authors would like to thank David Pask for sharing his notes which provided the background material on textile systems and their homology. This work was initiated during visits by the second and third authors to Western Sydney University in October 2025. They thank the first author for his warm hospitality. The first author acknowledges Australian Research Council Discovery Project DP230103184.

\end{document}